\newtheorem{Theor}{Theorem}
\newtheorem{Corol}{Corollary}
\newtheorem{theorem}{Theorem}[section]
\newtheorem{lemma}[theorem]{Lemma}
\newtheorem{proposition}[theorem]{Proposition}
\newtheorem{rem}[theorem]{Remark}
\newtheorem{fact}[theorem]{Fact}
\newtheorem*{Newhouse}{Motivation I}
\newtheorem*{Bonatti-Diaz}{Motivation II}
\theoremstyle{definition}
\newtheorem{definition}[theorem]{Definition}
\newtheorem{remark}[theorem]{Remark}
\numberwithin{equation}{section}
\def\II{{\mathbb I}} \def\JJ{{\mathbb J}}
 \def\NN{{\mathbb N}} 
 \def\RR{{\mathbb R}}
 \def\ZZ{{\mathbb Z}}
\def\mfT{{\mathfrak{T}}}
\def\Si{\Sigma}
\def\La{\Lambda}
\def\De{\Delta}
\def\Ga{\Gamma}
\def\la{\lambda}
\def\be{\beta}
\def\de{\delta}
\def\ve{\varepsilon}
\def\cC{{\mathcal C}}    \def\cU{{\mathcal
U}}
\def\cD{{\mathcal D}}
\def\st{{\operatorname{s}}}
\def\sst{{\operatorname{ss}}}
\def\ct{{\operatorname{c}}}
\def\cut{{\operatorname{cu}}}
\def\ut{{\operatorname{u}}}
\def\uut{{\operatorname{uu}}}
\def\loc{{\operatorname{loc}}}
\def\sign{{\operatorname{sign}}}
\def\scyc{{\operatorname{{\bf{sc}}}}}
\def\sscyc{{\operatorname{{\bf{ssc}}}}}
\def\sind{{\st\operatorname{-index}}}
\def\uind{{\ut\operatorname{-index}}}
\def\tpq{{{\tau_{p,q}}}}
\def\tqp{{{\tau_{q,p}}}}
\begin{document}

\title[Stabilization of heterodimensional cycles]{Stabilization of heterodimensional cycles}

\author{C. Bonatti}
\address{Institut de Math\'ematiques de Bourgogne, BP 47 870, 1078 Dijon Cedex, France}
\email{bonatti@u-bourgogne.fr}

\author{L. J. D\'iaz}
\address{Depto. Matem\'atica, PUC-Rio Marqu\^es de S. Vicente 225
22453-900 Rio de Janeiro RJ  Brazil} \email{lodiaz@mat.puc-rio.br}

\author{S. Kiriki}
\address{Department of Mathematics, Kyoto University of Education,
1 Fukakusa-Fujinomori, Fushimi, Kyoto, 612-8522, JAPAN}
\email{skiriki@kyokyo-u.ac.jp}

\subjclass[2000]{Primary:37C29, 37D20, 37D30}
\keywords{heterodimensional cycle, homoclinic class, hyperbolic
set, blender, $C^{1}$ robustness}

\date{\today}


\begin{abstract}
We consider diffeomorphisms $f$ with
 heteroclinic cycles associated to saddles $P$ and $Q$ of different indices.
 We say that a cycle of this type can be stabilized  if there are diffeomorphisms close to $f$ with a robust cycle associated to hyperbolic sets containing the continuations of $P$ and $Q$.
We focus on the case where the indices of these two saddles differ by one.
We prove that, excluding one particular case (so-called twisted cycles that additionally satisfy some geometrical restrictions), all such cycles can be stabilized.
\end{abstract}
\maketitle




\section{introduction}

In \cite{P08} Palis proposed a program whose main goal is a
geometrical description for the behavior of most dynamical
systems. This program pays special attention to the generation of
non-hyperbolic dynamics and to robust dynamical properties
(i.e., properties that hold for open sets of dynamical systems).
An important part of this program is the {\emph{Density Conjecture
 olicity versus cycles)}}: the two main sources of
non-hyperbolic dynamics are heterodimensional cycles and homoclinic
tangencies (shortly, {\emph{cycles}}), see \cite[Conjecture
1]{P08}\footnote{This conjecture was proved by Pujals-Sambarino for
$C^1$-surface diffeomorphisms in \cite{PS00} (since
heterodimensional cycles only can occur in manifolds of dimension
$n\ge 3$, for surface diffeomorphisms it is enough to consider
homoclinic tangencies).}. The goal of this paper is to study the
{\emph{generation of robust heterodimensional cycles}} (see
Definition~\ref{d.severalcycles}).
 
Besides Palis' program, we have the  following 
two motivations for this paper:

\begin{Newhouse}[{\cite{N79,PV94,R95}}]
Every $C^2$-diffeomorphism having a homoclinic
tangency associated with  a saddle $P$ is in the $C^{2}$-closure
of the set of diffeomorphisms having $C^{2}$-robust homoclinic
tangencies.
Moreover, these robust homoclinic tangencies can be taken
associated to hyperbolic sets containing the continuations of the
saddle $P$.
\end{Newhouse}

Using the terminology that will be introduced in this paper
this means that homoclinic tangencies of $C^2$-diffeomorphisms
can be {\emph{stabilized,}} see 
 Definition~\ref{d.severalcycles}. On the other hand,  
for $C^1$-diffeo\-mor\-phisms of surfaces
homoclinic tangencies
 cannot
be stabilized, see
in \cite{Gugu}. 
This leads to the following motivation.

\begin{Bonatti-Diaz}[{\cite{BDijmj}}]
Every diffeomorphism with a
 heterodimensional cycle associated with
a pair of hyperbolic saddles $P$ and $Q$ with $\dim E^s(P)
=\dim E^s(Q)\pm 1$ belongs to the $C^{1}$-closure of the
set of diffeomorphisms having $C^{1}$-robust heterodimensional
cycles. Here $E^s$ denotes the stable bundle of a saddle.
\end{Bonatti-Diaz}

One may think of 
the result in
Motivation II as a version of the results in Motivation I for
heterodimensional cycles in the $C^1$-setting. However,
the results in \cite{BDijmj} does not provide information about the
relation between the hyperbolic sets involved in the robust cycles
and the saddles in the initial one. Thus, one aims for
an extension of \cite{BDijmj} 
giving some information about the hyperbolic sets displaying the robust cycles,
see \cite[Question 1.9]{BDijmj}.

In this paper we prove that, with the exception of a special type
of heterodimensional cycles (so-called {\emph{twisted cycles}},
see Definition~\ref{d.twistenontwisted}), the hyperbolic sets
exhibiting the robust cycles can be taken containing the
continuations of the saddles in the initial cycle.  In fact,
by \cite{BDcontra} 
our results cannot be improved:
there are twisted cycles that cannot be stabilized, that is, the
hyperbolic sets with robust cycles cannot be taken containing the continuations of the saddles in the initial cycle.

To state precisely our results we need to introduce some
definitions. 
Recall that if $\La$ is a hyperbolic basic set of a
diffeomorphism $f\colon M\to M$ then there are a neighborhood $\cU_f$ of $f$ in
the space of $C^1$-diffeomorphisms
 and a continuous map $\cU_f\to M\colon g\mapsto \La_g$,
such that $\La_f=\La$, 
$\La_g$ is a hyperbolic basic set, and
the dynamics of $f|_{\La}$ and $g|_{\La_g}$ are conjugate.
 The
set $\La_g$ is called the {\emph{continuation}} of $\La$ for $g$.
Note that these continuations are uniquely defined.

\begin{definition}[Heterodimensional cycles] $\,$
\begin{itemize}
\item
The {\emph{$\st$-index}} ({\emph{$\ut$-index}}) of a
transitive hyperbolic set is the dimension of its stable (unstable) bundle.
\item
A diffeomorphism $f$ has a {\emph{heterodimensional cycle}}
associated to transitive hyperbolic basic sets $\Lambda$ and
$\Sigma$ of $f$ if these sets have different $\st$-indices and
their invariant manifolds meet cyclically, that is, if $W^\st(\Lambda,f)\cap
W^\ut(\Sigma,f)\ne\emptyset$ and $W^\ut(\Lambda,f)\cap
W^\st(\Sigma,f)\ne\emptyset$.
\item
The heterodimensional cycle has {\emph{coindex $k$}} if
 $\sind (\La)=\sind (\Si)\pm k$. In such a case we just write
 {\emph{coindex $k$ cycle.}}
\item
A diffeomorphism $f$ has a {\emph{$C^1$-robust heterodimensional
cycle}}  associated to its hyperbolic basic sets
$\La$ and $\Si$
if there is a $C^1$-neighborhood $\mathcal{U}$ of $f$
such that every diffeomorphism $g\in \mathcal{U}$ has a 
a heterodimensional cycle associated to the 
continuations
$\Lambda_g$ and $\Sigma_g$  of $\La$ and $\Si$, respectively.
\item
Consider a diffeomorphism $f$ with a heterodimensional cycle associated to a
 pair  of saddles $P$ and $Q$. This cycle  {\emph{can be $C^1$-stabilized}} if
every $C^1$-neighborhood $\cU$ of $f$ contains a diffeomorphism
$g$ with hyperbolic basic sets $\La_g\ni P_g$ and $\Si_g\ni Q_g$ having a robust heterodimensional cycle. Otherwise the cycle is said to be
{\emph{$C^1$-fragile.}}
\end{itemize}
\label{d.severalcycles}
\end{definition}

Remark that, by the Kupka-Smale genericity theorem (invariant
manifolds of hyperbolic periodic points of generic diffeomorphisms
are in general position), at least one of the hyperbolic sets involved in a robust 
cycle is necessarily {\emph{non-trivial,}} that is, not a periodic orbit.

\begin{definition}[Homoclinic class]
The {\emph{homoclinic class}} of a saddle $P$ is the closure of
the transverse intersections of the stable and unstable manifolds
$W^\st(P,f)$ and $W^\ut(P,f)$ of the orbit of $P$. We  denote
this class by $H(P,f)$. A homoclinic class is {\emph{non-trivial}}
if it contains at least two different orbits.

A homoclinic class can be also defined as the closure of the set
of saddles that are homoclinically related with $P$. Here we say that a saddle $Q$ is \emph{homoclinically related} with $P$ if the invariant manifolds of the orbits of $P$ and $Q$ meet cyclically and transversely, that is, $W^\st(P,f) \pitchfork W^\ut(Q,f) \ne
\emptyset$ and $W^\st(Q,f) \pitchfork W^\ut(P,f) \ne \emptyset$.
\label{d.homocliniclass}
\end{definition}


The following is a consequence of our results (see Theorems~\ref{t.complexornontwisted} and \ref{th.proposition} below).

\begin{Theor}\label{t.homoclinic}
Let $f$ be a $C^1$-diffeomorphism with a coindex one cycle
associated to saddles $P$ and $Q$. Suppose that at least one of
the homoclinic classes of these saddles is non-trivial. Then 
the heterodimensional cycle of $f$ associated to $P$ and $Q$ can be 
$C^1$-stabilized.
\end{Theor}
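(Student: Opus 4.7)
The plan is to reduce the statement to the two results cited in the excerpt, Theorems~\ref{t.complexornontwisted} and \ref{th.proposition}, by performing a dichotomy on the geometric type of the cycle. The strategy distinguishes non-twisted cycles, which can be stabilized unconditionally, from twisted cycles, which can be fragile, but only when the homoclinic classes around the two saddles are too small.

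First, I would check whether the cycle associated to $P$ and $Q$ is twisted in the sense of Definition~\ref{d.twistenontwisted}. If it is non-twisted, Theorem~\ref{t.complexornontwisted} produces, after an arbitrarily $C^1$-small perturbation, hyperbolic basic sets $\La_g \ni P_g$ and $\Si_g \ni Q_g$ exhibiting a robust heterodimensional cycle, and we are done without ever using the non-triviality hypothesis.

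The substantive case is the twisted one. Here I would exploit the non-triviality assumption, say that $H(P,f)$ is non-trivial (the other case is symmetric). By the characterization of homoclinic classes in terms of homoclinically related saddles (Definition~\ref{d.homocliniclass}), this provides a saddle $P'$ in a different orbit from $P$ with $W^\st(P,f) \pitchfork W^\ut(P',f) \ne \emptyset$ and $W^\ut(P,f) \pitchfork W^\st(P',f) \ne \emptyset$. The idea is then to use $P'$ to \emph{unwind the twist}: by composing the transitions of the original cycle with the transition induced by a transverse homoclinic intersection between $P$ and $P'$, and by a $C^1$-small perturbation, one obtains a new coindex one cycle between $P'$ and $Q$ whose geometric configuration is non-twisted. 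This is precisely the content of Theorem~\ref{th.proposition}: in the presence of non-trivial homoclinic structure around one of the saddles, one can replace that saddle by a homoclinically related one so that the resulting cycle with the other saddle becomes non-twisted.

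Once the new cycle is non-twisted, Theorem~\ref{t.complexornontwisted} provides a robust heterodimensional cycle for a diffeomorphism $g$ arbitrarily close to $f$, associated to basic sets $\La'_g \ni P'_g$ and $\Si_g \ni Q_g$. To replace $P'_g$ by $P_g$, I would use that homoclinic relations persist under $C^1$-perturbations: $P_g$ and $P'_g$ remain homoclinically related for $g$ near $f$, hence they are contained in a common transitive hyperbolic basic set $\La_g \supset \La'_g$, and the robust cycle with $\Si_g$ persists as a robust cycle between $\La_g$ and $\Si_g$. I expect the main obstacle to be the geometric step of converting a twisted cycle for $(P,Q)$ into a non-twisted cycle for $(P',Q)$, i.e.\ verifying that the extra homoclinic orbits in $H(P,f)$ provide enough degrees of freedom to break any possible twist; the remaining steps reduce to standard robust perturbation arguments and basic-set enlargement inside a homoclinic class.
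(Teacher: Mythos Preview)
Your overall architecture is close to the paper's, but there is a genuine gap in the twisted case and a misreading of Theorem~\ref{th.proposition}.

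First, a minor omission: the twisted/non-twisted dichotomy in Definition~\ref{d.twistenontwisted} is only defined for cycles with real central multipliers. The paper begins (Section~\ref{ss.proofofhomoclinic}) by invoking Theorem~\ref{t.complex} and Lemma~\ref{l.homoclinicstabilization} to reduce to that case while keeping the homoclinic class of $P$ non-trivial. You skip this step.

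The substantive problem is your description of Theorem~\ref{th.proposition}. That theorem does \emph{not} say that a non-trivial homoclinic class lets you replace $P$ by a homoclinically related $P'$ so that the cycle $(P',Q)$ becomes non-twisted. Its part (B) says that a twisted cycle can be stabilized \emph{provided it has the bi-accumulation property} (Definition~\ref{d.n.biaccumulation}): transverse homoclinic points of $P$ must lie on \emph{both} sides of $W^{\sst}_\loc(P)$ inside $W^\st_\loc(P)$. A single homoclinically related saddle $P'$, as you propose, gives you no control over which side of the strong stable manifold the relevant intersections fall on, so your ``compose transitions to unwind the twist'' step is unjustified as stated. In the paper the passage from ``$H(P,f)$ non-trivial'' to ``some $\bar P_g$ homoclinically related to $P_g$ is $\st$-bi-accumulated and has a cycle with $Q_g$'' is Lemma~\ref{l.summary}, which is not elementary: it imports results from \cite{ABCDW07} and \cite{DG}. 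Only after securing bi-accumulation does Theorem~\ref{th.proposition}(B) apply (internally via Proposition~\ref{p.t.twistednontwisted}, which is where the actual twist-to-non-twist conversion happens, and which genuinely uses homoclinic points on both sides). Your final enlargement step via Lemma~\ref{l.homoclinicstabilization} is correct and matches the paper.
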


A simple consequence of this result is the following:
 
\begin{Corol}\label{c.corol}
Let $f$ be a $C^1$-diffeomorphism with a heterodimensional cycle
associated to saddles $P$ and $Q$ such that $\sind (P)=\sind(Q)+1$.
Suppose that the intersection 
$W^\ut(P,f)\cap W^\st(Q,f)$ contains at least two different orbits.
Then the cycle can be $C^1$-stabilized.
\end{Corol}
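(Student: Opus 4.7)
The plan is to derive the corollary from Theorem~\ref{t.homoclinic}: it is enough to show that, under the hypothesis, at least one of the homoclinic classes $H(P,f)$ or $H(Q,f)$ is non-trivial. Write $n=\dim M$ and pick two points $x_1,x_2\in W^{\ut}(P,f)\cap W^{\st}(Q,f)$ lying on distinct orbits, together with a point $y\in W^{\st}(P,f)\pitchfork W^{\ut}(Q,f)$ provided by the cycle. Since $\sind(P)=\sind(Q)+1$, we have $\dim W^{\ut}(P)+\dim W^{\st}(Q)=n-1$, so the intersection at each $x_i$ is quasi-transverse (the tangent spaces sum to a codimension-one subspace of $T_{x_i}M$), whereas $\dim W^{\st}(P)+\dim W^{\ut}(Q)=n+1$ and near $y$ the intersection is a transverse arc. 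The aim is to produce a transverse homoclinic point of $P$, which would force $H(P,f)$ to be non-trivial.

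The main tool is the inclination lemma at $Q$ applied to a disk $D\subset W^{\st}(P)$ through $y$ that is transverse to $W^{\ut}(Q)$ at $y$. Since $\dim D=\sind(P)=\sind(Q)+1>\sind(Q)=\dim W^{\st}(Q)$, the backward iterates $f^{-n}(D)\subset W^{\st}(P)$ $C^1$-accumulate on any compact piece $K\subset W^{\st}(Q)$, producing codimension-one sub-disks of $W^{\st}(P)$ close to $K$ together with one extra direction $w$ coming from the orbit direction $T_y\bigl(W^{\ut}(Q)\cap W^{\st}(P)\bigr)$ transported backward into a neighborhood of $Q$. Choose $K$ to contain $f^N(x_1)$ for large $N$, so $f^N(x_1)$ is close to $Q$ and lies simultaneously on $W^{\ut}(P)$ and $W^{\st}(Q)$. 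Since $\dim W^{\st}(P)+\dim W^{\ut}(P)=n$, a transverse intersection $W^{\st}(P)\pitchfork W^{\ut}(P)$ near $f^N(x_1)$ \textemdash\ yielding a transverse homoclinic point of $P$ \textemdash\ exists as soon as $w\notin T_{f^N(x_1)}W^{\ut}(P)+T_{f^N(x_1)}W^{\st}(Q)$.

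The hard part will be ruling out the degenerate case where $w$ lies in the codimension-one subspace $T_{f^N(x_1)}W^{\ut}(P)+T_{f^N(x_1)}W^{\st}(Q)$ for every large $N$ along the orbit of $x_1$. This is the step in which I would decisively use the second orbit: the analogous construction along the orbit of $x_2$ gives a different family of codimension-one configurations, because $x_1$ and $x_2$ lie on disjoint invariant subsets of $W^{\ut}(P)\cap W^{\st}(Q)$. An argument exploiting the one-dimensionality of the complementary direction in a coindex-one cycle, together with the invariance of both orbits, should preclude simultaneous degeneracy along them; if such a degeneracy held, the two orbits would be forced to share a common central-like direction at $Q$, contradicting their distinctness. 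Therefore the transversality condition holds at some iterate of one of the $x_i$, producing the desired non-trivial transverse homoclinic point of $P$ and, via Theorem~\ref{t.homoclinic}, stabilizing the cycle.
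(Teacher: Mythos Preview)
Your overall plan—reduce to Theorem~\ref{t.homoclinic} by producing a non-trivial homoclinic class for $P$—is the right one, and your use of the inclination lemma at $Q$ is exactly the mechanism the paper exploits. However, there is a genuine gap.

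You try to show that $H(P,f)$ is already non-trivial for $f$ itself. This is not true in general, and your argument does not establish it. The inclination lemma tells you that large backward iterates $f^{-n}(D)\subset W^{\st}(P)$ are $C^1$-close to a compact piece of $W^{\st}(Q)$ containing $f^N(x_1)$; it does \emph{not} say that these disks actually meet $W^{\ut}(P)$. Closeness is not intersection: the $(s+1)$-disk $f^{-n}(D)$ can sit arbitrarily near $f^N(x_1)\in W^{\ut}(P)$ without touching $W^{\ut}(P)$. Your third paragraph then discusses the transversality of a putative intersection, but the issue is existence, not transversality. The ``degeneracy'' you try to rule out is beside the point, and the proposed contradiction (two distinct heteroclinic orbits cannot share a common central-like direction at $Q$) is false: in the simple-cycle normal form all quasi-transverse heteroclinic points share the same one-dimensional central bundle.

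The paper's proof fixes this with a single perturbation, which is allowed because $C^1$-stabilization is a statement about diffeomorphisms arbitrarily close to $f$. One first keeps one heteroclinic orbit of $W^{\ut}(P)\cap W^{\st}(Q)$ intact, so the cycle persists. By the inclination lemma (your observation), $W^{\st}(P)$ accumulates on $W^{\st}(Q)$ and hence passes arbitrarily close to the \emph{second} heteroclinic point $x_2\in W^{\ut}(P)$. A $C^1$-small local perturbation near $x_2$ then pushes $W^{\ut}(P)$ into an actual transverse intersection with $W^{\st}(P)$, creating a transverse homoclinic point of $P$. This perturbation may destroy the orbit of $x_2$, but that is harmless: the first heteroclinic orbit was preserved, so the perturbed diffeomorphism still has a cycle associated to $P$ and $Q$, now with $H(P)$ non-trivial, and Theorem~\ref{t.homoclinic} applies. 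The presence of two distinct orbits is used precisely so that one can be sacrificed to create the homoclinic point while the other maintains the cycle.
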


The question of the stabilization of cycles is relevant 
for describing the global dynamics of 
diffeomorphisms (indeed this is another motivation for this paper). 
Let us explain this point succinctly.
Following \cite{Con,Newhouse,A03},
this global  dynamics  is structured
by means of 
homoclinic or/and chain recurrence classes. 
The goal is
to describe the dynamics of these classes and their relating cycles. 
In general,  homoclinic classes 
 are  (properly) contained in chain recurrence classes. 
For  $C^1$-generic diffeomorphisms and  for  hyperbolic periodic points, 
these two kinds of classes coincide,
\cite{BC04}. However,
there are  non-generic situations where  two different homoclinic classes 
are ``joined" by  a cycle. In this case these classes are contained in one common 
chain recurrence class which hence is strictly larger.
We would like to know under which conditions after small perturbations these two homoclinic classes explode and fall into the very same homoclinic class $C^1$-robustly. 
Indeed this  occurs if the cycle can be
$C^1$-stabilized. 
Examples where this stabilization is used 
for describing global dynamics
can be found in \cite{BCDG,Sh,Shbis}.
See \cite[Chapter
10.3-4]{BDV_book} and \cite{bible} for a broader 
discussion of these questions.

To prove our   results we analyze the dynamics associated to
different types of coindex one cycles. This
analysis  essentially depends on two  factors: the
{\emph{central multipliers of the cycle}} and its {\emph{unfolding
map.}} Let us now discuss this point briefly, for further details we
refer to Section~\ref{s.simple}.

\subsection{Multipliers and unfolding map of a cycle}
Let $f$ be a diffeomorphism with a coindex one cycle associated
to saddles $P$ and $Q$. In what follows we will assume that $\sind (P)=\sind (Q)+1$. Denote by $\pi(R)$ the period of a periodic
point $R$.

We say that the cycle is {\emph{partially hyperbolic}} 
if there are heteroclinic points $X\in W^\st(P,f)\cap
W^\ut(Q,f)$ and $Y \in W^\ut(P,f)\cap W^\st(Q,f)$ such that
the closed set formed by the orbits of $P,Q,X,$ and $Y$ has 
a partially hyperbolic splitting of the form $E^\sst\oplus E^c\oplus E^\uut$,
where $E^c$ is one-dimensional, $E^\sst$ is uniformly contracting, and 
$E^\uut$ is uniformly expanding. 
We call $E^\ct$
the {\emph{central bundle}}.
Note that, in particular, this implies that
$X$ is a transverse intersection and $Y$ is a quasi-tranverse intersection
of the invariant manifolds. Also 
 observe that the bundle $E^c$ is necessarily
non-hyperbolic. Bearing in mind this property we
introduce the following definition. 

\begin{definition}[Central multipliers]
The \emph{cycle has real central multipliers} if there are a
contracting real eigenvalue $\la$ of $Df^{\pi(P)}(P)$ and an
expanding real eigenvalue $\be$ of $Df^{\pi(Q)}(Q)$ such that:
\textbf{(i)} $\la$ and $\beta$ have multiplicity one,
\textbf{(ii)} $|\la|> |\sigma|$ for every contracting eigenvalue
$\sigma$ of $Df^{\pi(P)}(P)$, and \textbf{(iii)} $|\beta|< |\eta|$
for every expanding eigenvalue $\eta$ of $Df^{\pi(Q)}(Q)$. In this
case, we say that $\lambda$ and $\beta$ are the {\emph{real
central multipliers of the cycle.}}

Similarly, the \emph{cycle has  non-real central multipliers} if
either \textbf{(i)} there are a pair of  non-real (conjugate)
contracting
 eigenvalues $\la$ and $\bar \la$ of $Df^{\pi(P)}(P)$
such that $|\la|=|\bar \la| \ge |\sigma|$ for every contracting
eigenvalue $\sigma$ of $Df^{\pi(P)}(P)$, or
 \textbf{(ii)} there are a pair of  non-real
(conjugate) expanding
 eigenvalues $\be$ and $\bar \be$ of $Df^{\pi(Q)}(Q)$
such that $|\be|=|\bar \be| \le |\eta|$ for every expanding
eigenvalue $\eta$ of $Df^{\pi(Q)}(Q)$.
\end{definition}

Let us note that cycles with central real multipliers
can be perturbed to get  partially hyperbolic ones (associated to the continuations
of the saddles in the initial one).

In the case of cycles with real central  multipliers we will distinguish
so-called {\emph{twisted}} and {\emph{non-twisted}} cycles, see
Definition~\ref{d.twistenontwisted}. An intuitive explanation of
these two sorts of cycles goes as follows, see Figure~\ref{f.twisted}.

In order to study the dynamics of the 
 cycle we select heteroclinic points $X\in W^\st(P,f)\cap
W^\ut(Q,f)$ and $Y \in W^\ut(P,f)\cap W^\st(Q,f)$. Typically,  $X$
is a transverse intersection point and  $Y$ is a quasi-transverse
intersection point (due to dimension deficiency). 
The next step is to consider a {\emph{neighborhood of the cycle,}}
that is, an open set $V$ containing the orbits of $P,Q,X$, and
$Y$, and study the dynamics of perturbations of $f$ in such a
neighborhood. 
If the neighborhood $V$ is small enough, possibly after a
perturbation of $f$, the dynamics of $f$ in $V$ is
partially hyperbolic 
with a splitting of the form
$E^{\sst}\oplus E^\ct\oplus E^\uut$ (recall the definition above).

Replacing $Y$ by some backward iterate, we can
assume that the heteroclinic point $Y$ is close to $P$. We pick some large number $k$ such
that $f^k(Y)$ is nearby $Q$ and consider the map $\mfT_1=f^{k}$
defined in a small neighborhood of $Y$. This map is called the 
{\emph{unfolding map}}.
 If it is possible to pick $k$ in such a
way that $Df^k$ preserves the orientation of the central bundle then we
say that the cycle is {\emph{non-twisted}}. Otherwise, the
cycle is {\emph{twisted}}. Note
that in the previous discussion the choice of the heteroclinic
point $X$ does not play any relevant role.


\begin{figure}[h]
\centering \scalebox{0.75}{\includegraphics[clip]{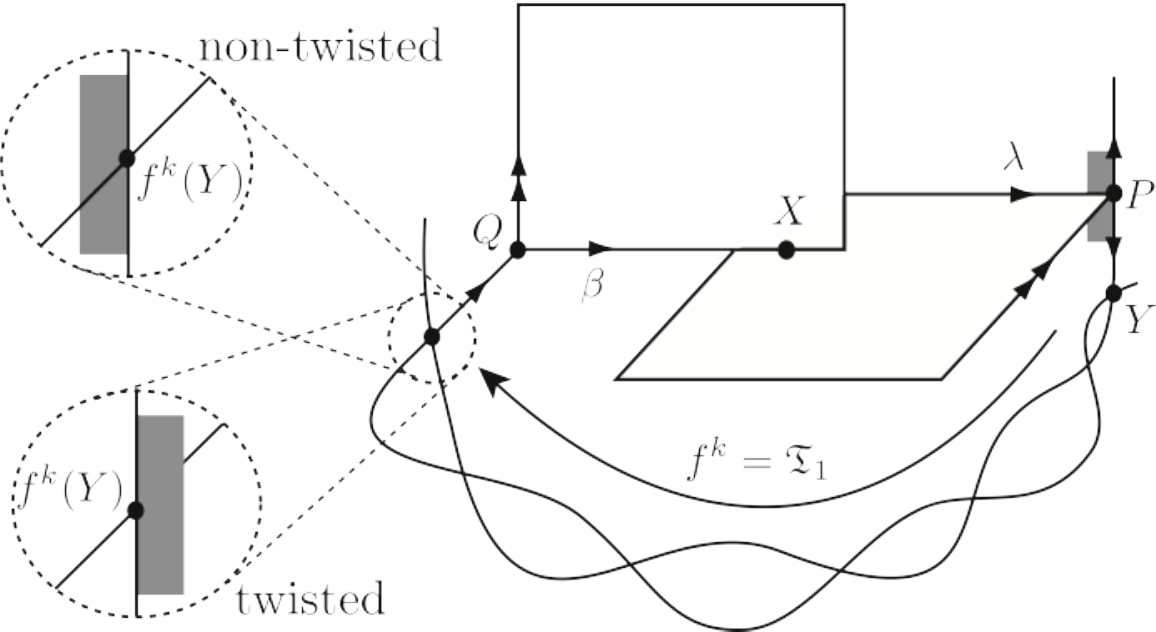}}
\caption{Twisted and non-twisted cycles} \label{f.twisted}
\end{figure}


More precisely, the dynamics of the unfolding of the cycle mostly
depends on the signs of the central eigenvalues $\lambda$
(associated to $P$) and $\beta$ (associated to $Q$) and on the
restriction of $\mfT_1$ to the central bundle. We associate to the
cycle the  \emph{signs}, $\sign(Q)$, $\sign(P)$, and
$\sign(\mfT_{1})$
 in $\{+,-\}$ determined by the
following rules:
    \begin{itemize}
    \item $\sign(Q)=+$ if $\beta>0$ and $\sign(Q)=-$ if $\beta<0$;
    \item  $\sign(P)=+$ if $\lambda>0$ and  $\sign(P)=-$ if $\lambda<0$; and
    \item $\sign(\mfT_{1})=+$  if  $\mfT_1$
    preserves the orientation in the central direction and $\sign(\mfT_{1})=-$  if the orientation is reversed.
    \end{itemize}
    A cycle is twisted if $\sign(Q)=+,$ $\sign(P)=+$, and $\sign (\mfT_1)=-$.
   Otherwise the cycle is non-twisted. 
For details see Definition~\ref{d.twistenontwisted}.


%
%


Let us observe that the discussion above is reminiscent of the
one in \cite[Section 2]{PTannals} about bifurcations of
homoclinic tangencies of surface diffeomorphisms.
It involves similar ingredients to the ones above:  
 the
signs of the eigenvalues of the derivatives, the sides of the
tangencies, and the connections (homoclinic and heteroclinic
intersections).

We are now ready to state our main results. 

\begin{Theor}
\label{t.complexornontwisted} Consider a diffeomorphism $f$ having
a coindex one cycle associated to saddles $P$ and $Q$. Suppose
that
\begin{enumerate}[{\bf(A)}]
\item
either the cycle has a non-real central multiplier,
\item
or the cycle has real multipliers and is non-twisted.
\end{enumerate}
 Then 
the  cycle of $f$ associated to $P$ and $Q$ can be 
$C^1$-stabilized.
\end{Theor}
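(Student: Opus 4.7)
The plan is to reduce to a partially hyperbolic setting and then produce a \emph{blender} (in the sense of Bonatti--D\'\i az) containing the continuation of one of the saddles, using the extra room provided by either the rotation (case (A)) or the favorable orientation data (case (B)).

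First, I would perform a preliminary $C^1$-perturbation supported in small neighborhoods of $P$ and $Q$ to linearize $f$ near these saddles and adjust the eigenvalues so that the closed $f$-invariant set $\mathrm{orb}(P)\cup\mathrm{orb}(Q)\cup\mathrm{orb}(X)\cup\mathrm{orb}(Y)$ admits a dominated splitting $E^{\sst}\oplus E^{\ct}\oplus E^{\uut}$ with $\dim E^{\ct}=1$ (in case (A), $\dim E^{\ct}=2$ on one of the orbits, carrying a uniform rotation); one then works in local normal form in which $\mfT_0:=f^{k_0}$ and the unfolding map $\mfT_1:=f^{k_1}$ (defined near $X$ and $Y$ respectively) become explicit. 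The goal reduces to constructing, after an arbitrarily small additional perturbation, a transitive hyperbolic basic set $\Gamma_g$ containing $Q_g$ (a $\cut$-blender) whose unstable manifolds robustly cross a ``superposition region'' that contains a piece of $W^{\st}(P_g,g)$, while simultaneously keeping the transverse intersection $W^{\ut}(P,f)\pitchfork W^{\st}(Q,f)$ along $X$; by hyperbolicity of $P$ this last intersection extends to a robust intersection of $W^{\ut}(P_g,g)$ with $W^{\st}(\Gamma_g,g)$, yielding the robust coindex one cycle.

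The construction of $\Gamma_g$ is the heart of the proof and splits according to the hypothesis. In case (B), the non-twistedness condition on $(\sign(P),\sign(Q),\sign(\mfT_1))$ is exactly what is needed to arrange, by a perturbation supported in a fundamental domain of the central dynamics near $Q$, a homoclinic tangency of $Q$ (or equivalently two periodic points inherited from a saddle-node bifurcation along the weak unstable direction of $Q$) whose unfolding produces a horseshoe of the classical blender type: the orientations line up so that the unstable laminae fill a product region in the $(\ct,\uut)$-plane and the characteristic ``blender region'' has non-empty interior. In case (A), the non-real central multipliers supply, after an arbitrarily small perturbation normalizing the rotation number, an iterated function system of affine contractions/expansions on a disk that contain a common ball in their images; this directly yields a blender without any orientation hypothesis, since the rotation alone already guarantees that unstable disks ``spin around'' the central plaque. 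In both cases the blender $\Gamma_g$ will contain the continuation $Q_g$ as a hyperbolic periodic point.

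Finally, I would verify that the heteroclinic datum is preserved and produces a robust cycle. The transverse intersection at $X$ survives any small perturbation, giving robustly $W^{\ut}(P_g,g)\cap W^{\st}(\Gamma_g,g)\neq\emptyset$. For the other side, the quasi-transverse orbit of $Y$, after the perturbation, lands inside the characteristic region of the blender, so the defining property of the blender upgrades the non-robust intersection $W^{\ut}(\Gamma_g,g)\cap W^{\st}(P_g,g)$ to a $C^1$-robust one: any $C^1$-close diffeomorphism still has a central unstable disk in $W^{\ut}(\Gamma_{g'},g')$ crossing the strong stable foliation of $P_{g'}$, and dimension count then forces an intersection with $W^{\st}(P_{g'},g')$. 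Combining the two robust intersections with $\Lambda_g:=H(P_g,g)$ and $\Sigma_g:=\Gamma_g\ni Q_g$ gives the $C^1$-stabilization. The main obstacle I expect is verifying the blender property precisely in case (B): the orientation bookkeeping encoded in $\sign(\mfT_1)$ is exactly the condition that prevents the candidate horseshoe from being ``folded'' in a way that destroys the covering property of the iterated function system, and this is why twisted cycles must be excluded.
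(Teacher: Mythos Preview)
Your sketch has the right large-scale shape (produce a blender and use it to upgrade the quasi-transverse intersection to a robust one), but several concrete steps are either misstated or do not match what actually works.

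First, a bookkeeping error that propagates: in this setting $X\in W^{\st}(P)\cap W^{\ut}(Q)$ is the \emph{transverse} heteroclinic point and $Y\in W^{\ut}(P)\cap W^{\st}(Q)$ is the quasi-transverse one (the dimensions force this). Consequently the intersection that is automatically robust is $W^{\st}(P_g)\pitchfork W^{\ut}(\Gamma_g)$, and the one that must be made robust via the blender is $W^{\ut}(P_g)\cap W^{\st}(\Gamma_g)$; a $\cut$-blender achieves this by having its \emph{stable} lamination meet every disk in the superposition region, so one needs $W^{\ut}(P_g)$ (not $W^{\st}(P_g)$) to contain such a disk. Your description has these roles reversed.

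Second, in case (B) the paper does not build a blender that directly contains $Q_g$, nor does it use any ``homoclinic tangency of $Q$''. The mechanism is: pass to a simple cycle with adapted homoclinic intersections, analyze the one-dimensional IFS in the central direction, and produce a \emph{new} periodic point $S_g$ of saddle-node or flip type with a \emph{strong} homoclinic intersection, together with a list of heteroclinic connections linking $S_g$ to both $P_g$ and $Q_g$ (this is the content of Propositions~\ref{p.l.R} and~\ref{p.nonreversingm}). The strong homoclinic intersection of $S_g$ is what generates the blender $\Gamma_h$, and the blender contains the hyperbolic continuation $S_h^{+}$, not $Q_h$. One then shows separately that $S_h^{+}$ is homoclinically related to $Q_h$, so a larger basic set $\Sigma_h\supset\Gamma_h$ contains $Q_h$. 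Your proposal skips this intermediate periodic point entirely, and ``perturbation supported in a fundamental domain of the central dynamics near $Q$'' does not by itself give a blender containing $Q_g$.

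Third, in case (A) the paper takes a completely different route from yours: it does \emph{not} exploit the rotation to build a two-dimensional-center blender. Instead it invokes Theorem~\ref{t.complex} to perturb to a cycle with real central multipliers associated to new saddles $P'_g,Q'_g$ homoclinically related to $P_g,Q_g$; crucially, this reduction forces $H(P'_g,g)$ to be non-trivial (Remark~\ref{r.non-trivial}), so if the resulting real cycle happens to be twisted one can still invoke the bi-accumulation case of Theorem~\ref{th.proposition}. Your direct rotation argument may be plausible in spirit, but as written it is only an assertion (``this directly yields a blender'') with no indication of how the cycle data and the blender superposition region are linked to $P$ and $Q$.
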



 Let us observe that
Theorem~\ref{t.complexornontwisted} cannot be improved. Indeed, there are
examples of   diffeomorphisms with twisted cycles that cannot be
stabilized, see \cite{BDcontra}.
On the other hand, we prove that cycles with 
the {\emph{bi-accumulation property}} can be $C^1$-stabilized. 
 Let us state this result more 
precisely.

Given a periodic point $R$ of  $f$, consider
the eigenvalues $\la_1(R),\dots, \la_n(R)$ of $Df^{\pi(R)}(R)$
ordered in increasing modulus  and counted with multiplicity.
If $R$ is hyperbolic, has $\st$-index $k$, and $|\la_{k-1}(R)|<|\la_{k}(R)|$
then there is
a unique invariant manifold $W^\sst (R,f)$ (the strong stable manifold of $R$) 
tangent to the eigenspace associated
to  $\la_1(R),\dots,\la_{k-1}(R)$ (the strong stable bundle). The manifold 
$W^\sst_\loc(R,f)$ has codimension one in  $W^\st_\loc(R,f)$ and
$W^\sst_\loc (R,f)$ splits each component of $W^\st_\loc(R,f)$
into two parts.

\begin{definition}[Bi-accumulation property]
A saddle  $R$ of $\st$-index $k$ such that $|\la_{k-1}(R)|<|\la_k(R)|$
is  {\emph{$\st$-bi-accumulated}} (by homoclinic points) 
if every component of
$W^\st_\loc(R,f)\setminus W^\sst_\loc (R,f)$ contains transverse
homoclinic points of $R$. 

A heterodimensional cycle associated to saddles $P$ and $Q$ 
with $\sind(P)=\sind(Q)+1$ 
 is {\emph{bi-accumulated}}
is either $P$ is $\st$-bi-accumulated for $f$ or 
$Q$ is $\st$-bi-accumulated for $f^{-1}$. 
\label{d.n.biaccumulation}
\end{definition}

In the next result we consider cycles with real central multipliers.

  \begin{Theor}\label{th.proposition}
 $\,$
 \begin{enumerate}[{\bf(A)}]
 \item
 Every
 non-twisted cycle can be $C^1$-stabilized.
\item 
Every twisted cycle with the bi-accumulation property can be 
$C^1$-stabilized.
 \end{enumerate}
 \end{Theor}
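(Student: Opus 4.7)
Part (A) of Theorem~\ref{th.proposition} is the real-multiplier case of Theorem~\ref{t.complexornontwisted}(B), so the new content lies in part (B). Let $f$ have a twisted coindex one cycle associated to saddles $P$ and $Q$ with $\sind(P)=\sind(Q)+1$, and assume without loss of generality that $P$ is $\st$-bi-accumulated (the case where $Q$ is $\st$-bi-accumulated for $f^{-1}$ is symmetric under time reversal). The plan is to produce a $C^1$-small perturbation $\tilde f$ of $f$ for which $P_{\tilde f}$ and $Q_{\tilde f}$ are involved in a \emph{non-twisted} coindex one cycle. Applying part (A) to $\tilde f$ then delivers a further perturbation $g$ with a robust cycle associated to hyperbolic sets containing $P_g$ and $Q_g$.

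The first step is a partially hyperbolic reduction: after an arbitrarily small $C^1$-perturbation, the closed set formed by the orbits of $P,Q$, of the quasi-transverse heteroclinic $Y\in W^\ut(P,f)\cap W^\st(Q,f)$, and of the transverse heteroclinic $X\in W^\st(P,f)\cap W^\ut(Q,f)$ admits a splitting $E^{\sst}\oplus E^{\ct}\oplus E^{\uut}$ with one-dimensional central bundle. Because transverse homoclinic intersections are $C^1$-robust, $P$ remains $\st$-bi-accumulated, and there exist transverse homoclinic points $X_+,X_-$ of $P$ in the two components of $W^\st_\loc(P,f)\setminus W^\sst_\loc(P,f)$. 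These two components correspond to the two signs of the $E^{\ct}$-coordinate transported from $P$ along $W^\st_\loc(P,f)$.

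The next step uses bi-accumulation to reverse $\sign(\mfT_1)$. Let $X_-$ lie on the component of $W^\st_\loc(P,f)\setminus W^\sst_\loc(P,f)$ opposite to the one containing the forward iterates of $Y$ approaching $P$. By the $\lambda$-lemma, backward iterates of $W^\st(Q,f)$ through $Y$ accumulate $C^1$ on $W^\ut(P,f)$ near $P$, while forward iterates of a small disk in $W^\ut(P,f)$ through a preimage of $X_-$ accumulate $C^1$ on $W^\ut(P,f)$. Combining these, one finds a new intersection point $Y'\in W^\ut(P,f)\cap W^\st(Q,f)$, arbitrarily close to $Y$, whose forward orbit to $Q$ is routed through a neighborhood of $X_-$. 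The corresponding unfolding map $\mfT_1'$ factors through this detour, and picks up an orientation-reversing factor on $E^{\ct}$ because the detour crosses the opposite component of $W^\st_\loc(P,f)\setminus W^\sst_\loc(P,f)$. Hence $\sign(\mfT_1')=-\sign(\mfT_1)$. The eigenvalues at $P$ and $Q$ are unchanged, so $\sign(P)$ and $\sign(Q)$ are preserved, and the transverse intersection $X$ survives the construction. Since the original triple was $(+,+,-)$, the new triple becomes $(+,+,+)$, i.e., non-twisted; Theorem~\ref{t.complexornontwisted}(B) then completes the stabilization.

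The main obstacle is the orientation bookkeeping in the second step: verifying rigorously that the two components of $W^\st_\loc(P,f)\setminus W^\sst_\loc(P,f)$ encode opposite orientations of $E^{\ct}$ along $W^\st(P,f)$; that the $\lambda$-lemma applied in the partially hyperbolic setting produces $Y'$ genuinely in $W^\ut(P,f)\cap W^\st(Q,f)$ rather than in a central-stable sub-leaf of the wrong dimension; and that the sign of the composite unfolding map $\mfT_1'$ factors as the product of the signs contributed by its two pieces. A related technicality is to ensure that the small modification carrying $f$ to $\tilde f$ does not destroy the partial hyperbolicity or the transverse heteroclinic $X$, so that all the hypotheses of Theorem~\ref{t.complexornontwisted}(B) remain available for the modified cycle.
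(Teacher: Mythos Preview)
Your proposal has two substantial gaps.

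First, you dispose of part (A) by identifying it with Theorem~\ref{t.complexornontwisted}(B), and you invoke the same theorem at the end of your argument for (B). While the two statements are indeed equivalent, in this paper the logical dependence runs the other way: Theorems~\ref{t.homoclinic} and~\ref{t.complexornontwisted} are \emph{derived from} Theorem~\ref{th.proposition} (see Section~\ref{s.proofoftheoremsAB}). Invoking Theorem~\ref{t.complexornontwisted}(B) here is therefore circular, and the entire content of (A)---the construction of a saddle-node/flip with strong homoclinic intersection via the IFS analysis (Proposition~\ref{p.l.R} and Lemma~\ref{l.ifs}), the generation of a blender, and the intersection properties linking it to both $P$ and $Q$ (Proposition~\ref{p.nonreversingm} and Theorem~\ref{t.p.BDblenderb})---is missing from your argument.

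Second, your approach to (B) rests on the claim that routing the heteroclinic orbit through a homoclinic point $X_-$ lying in the ``opposite'' component of $W^\st_\loc(P)\setminus W^\sst_\loc(P)$ forces the new unfolding map $\mfT_1'$ to pick up an orientation-reversing factor on $E^\ct$. This is not correct: the component in which $X_-$ lies records the \emph{sign of its central coordinate}, whereas $\sign(\mfT_1')$ is governed by the \emph{sign of the central derivative} of the composed transition. These are unrelated quantities; one can have homoclinic points on either side of $W^\sst_\loc(P)$ whose associated transitions preserve, or reverse, the central orientation. Bi-accumulation gives you points on both sides but says nothing about derivative signs. (There is also a confusion in the setup: since $Y\in W^\ut(P)$, it is the \emph{backward} iterates of $Y$ that approach $P$, and they lie in $W^\ut_\loc(P)$, not in any component of $W^\st_\loc(P)\setminus W^\sst_\loc(P)$.)

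The paper's route to (B) sidesteps this obstruction. Rather than trying to un-twist the $(P,Q)$ cycle directly, it first uses bi-accumulation to obtain a twisted simple cycle with an adapted homoclinic intersection (Lemma~\ref{l.step1}), and then uses the IFS machinery to manufacture a new periodic saddle $R$ of index $s+1$, homoclinically related to $P$, together with a heteroclinic connection from $R$ to $Q$ (Lemma~\ref{l.step2}). For this \emph{new} cycle the sign data are computable from the explicit composition $\theta_{1,t}\circ\tilde\phi_\lambda^\ell\circ\theta_2$, and a short case analysis on $\sign(\theta_2)$ shows that either the central multiplier of $R$ is negative or the new unfolding map preserves orientation; in either case the $(R,Q)$ cycle is non-twisted. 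Part (A) and Lemma~\ref{l.homoclinicstabilization} then finish. The passage through an auxiliary saddle is precisely what makes the orientation bookkeeping tractable.
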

 
 Indeed, Theorems~\ref{t.homoclinic} and \ref{t.complexornontwisted}  are consequence of  Theorem~\ref{th.proposition}.

Finally, our results can be summarized as follows:

\begin{Corol}\label{c.summary}
Consider a diffeomorphism $f$ with a fragile cycle associated to saddles $P$ and $Q$ with $\sind (P)=\sind (Q)+1$. Then 
\begin{itemize}
\item
the cycle has positive central real multipliers,
\item
the cycle is persistently twisted (i.e., the cycle cannot be perturbed
to get a non-twisted cycle associated to $P$ and $Q$),
\item
the intersection $W^u(P,f)\cap W^\ut(Q,f)$ consists of exactly one orbit, and
\item
the homoclinic classes of $P$ and $Q$ are both trivial.
\end{itemize}
\end{Corol}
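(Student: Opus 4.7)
The plan is to obtain each of the four conclusions of Corollary~\ref{c.summary} as the contrapositive of one of the preceding stabilization results: in each case the negation of the conclusion would put the cycle within the scope of Theorem~\ref{t.complexornontwisted}, Theorem~\ref{th.proposition}, Corollary~\ref{c.corol}, or Theorem~\ref{t.homoclinic}, yielding a $C^1$-stabilization and contradicting fragility.

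First I would handle the central multipliers. Theorem~\ref{t.complexornontwisted}(A) stabilizes any cycle with a non-real central multiplier, so a fragile cycle must possess a real contracting central eigenvalue $\lambda$ at $P$ and a real expanding central eigenvalue $\beta$ at $Q$. To promote this to $\lambda>0$ and $\beta>0$, I would invoke the sign criterion from the definition preceding Theorem~\ref{t.complexornontwisted}: if either $\lambda<0$ or $\beta<0$, then $\sign(P)=-$ or $\sign(Q)=-$, which forces the cycle to be non-twisted; Theorem~\ref{th.proposition}(A) would then stabilize it, contradicting fragility.

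For the persistent-twisting item I would argue in two steps. Immediately, Theorem~\ref{th.proposition}(A) rules out non-twisted cycles, so the cycle of $f$ itself must be twisted. For persistence, suppose some $g$ arbitrarily $C^1$-close to $f$ exhibits a non-twisted cycle associated to the continuations $P_g$ and $Q_g$; applying Theorem~\ref{th.proposition}(A) to $g$ produces $h$ arbitrarily $C^1$-close to $g$ (hence to $f$) carrying a robust cycle associated to hyperbolic sets containing the continuations of $P$ and $Q$, again contradicting the fragility of $f$'s cycle.

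The remaining two items are direct contrapositives: by Corollary~\ref{c.corol}, if $W^\ut(P,f)\cap W^\st(Q,f)$ contained two distinct orbits the cycle would be stabilizable, so --- using that the cycle forces this intersection to be non-empty --- it consists of exactly one orbit; and by Theorem~\ref{t.homoclinic}, non-triviality of either $H(P,f)$ or $H(Q,f)$ would stabilize the cycle, so both homoclinic classes are trivial. The only point I expect to need explicit care is the ``two-step'' perturbation in the persistence argument: one must invoke the continuity of continuations of hyperbolic saddles so that the robust cycle produced by Theorem~\ref{th.proposition}(A) for $g$ is genuinely associated to the continuations of the original $P$ and $Q$ rather than to unrelated saddles --- a routine but essential bookkeeping step.
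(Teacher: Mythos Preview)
Your argument is correct and is precisely the contrapositive reading the paper intends: Corollary~\ref{c.summary} is stated as a direct summary of Theorems~\ref{t.homoclinic}, \ref{t.complexornontwisted}, \ref{th.proposition} and Corollary~\ref{c.corol}, with no separate proof given, and your derivation of each item from the corresponding stabilization result matches this.
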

Examples of fragile cycles satisfying the four properties in the corollary
can be found in \cite{BDcontra}.
 
 \section{Ingredients of the proofs}
In this section we review some tools  
of our constructions.

\subsection{Reduction to the case of cycles with real multipliers}
\label{ss.reductiona}

A first step is to see that to prove our results it is enough to consider
cycles with real central multipliers. 
For that let us
recall a result from \cite{BDijmj}.

\begin{theorem}[Theorem 2.1 in \cite{BDijmj}]
Let $f$ be a diffeomorphism with a coindex one cycle associated
to saddles $P$ and $Q$. Then there are diffeomorphisms $g$
arbitrarily $C^1$ close to $f$ with a coindex one cycle with real
central multipliers associated to saddles $P_g^\prime$ and
$Q_g^\prime$ which are homoclinically related to the continuations
$P_g$ and $Q_g$ of $P$ and $Q$. In this result one may have $P=P_g$ and/or $Q=Q_g$.
\label{t.complex}
\end{theorem}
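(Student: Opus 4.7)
The plan is to construct two new periodic saddles $P'_g$ and $Q'_g$ by shadowing long orbits that wind through the given heterodimensional cycle, and to use Franks' lemma along these pseudo-orbits both to close them into honest periodic orbits and to align their central eigendirections so that the central multipliers become real. The key point is that a coindex one cycle provides a natural one-dimensional central line on which complex rotations coming from either $P$ or $Q$ can be killed by small perturbations of the transition maps sitting at the heteroclinic orbits.

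First I would fix heteroclinic points $X \in W^\st(P,f) \cap W^\ut(Q,f)$ and $Y \in W^\ut(P,f) \cap W^\st(Q,f)$. For integers $n,m \geq 1$, consider the pseudo-periodic concatenation $\Gamma_{n,m}$ obtained by following $P$ for $n$ periods, jumping via a neighbourhood of $X$ to near $Q$, following $Q$ for $m$ periods, and jumping via $Y$ back to near $P$. An arbitrarily small $C^1$ perturbation of $f$, supported in small pairwise-disjoint neighbourhoods of one point on each of the orbits of $X$ and $Y$ (and hence disjoint from the orbits of $P$ and $Q$), closes $\Gamma_{n,m}$ into an honest periodic orbit $R_{n,m}$. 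Because the perturbations leave $P$ and $Q$ untouched, their continuations coincide with themselves, and most of the heteroclinic structure is preserved.

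Next I would analyse the derivative along $R_{n,m}$, which up to bounded transition matrices $T_X, T_Y$ has the form
\[
A_{n,m} = T_Y \cdot \bigl(Df^{\pi(Q)}(Q)\bigr)^{m} \cdot T_X \cdot \bigl(Df^{\pi(P)}(P)\bigr)^{n}.
\]
The coindex one hypothesis yields a distinguished one-dimensional central direction obtained by linking the weakest contracting eigenspace at $P$ to the weakest expanding eigenspace at $Q$ via the transitions $T_X, T_Y$. If either of these eigenspaces is complex two-dimensional, then the corresponding iterate acts on it by a rotation of angle growing linearly in $n$ or $m$; by further perturbing $T_X$ or $T_Y$ by a small rotation (another application of Franks' lemma localised at a single transition step) and by tuning the integers $n,m$, I can arrange that $A_{n,m}$ possesses a simple real dominant contracting eigenvalue and a simple real weakest expanding eigenvalue. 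For $n$ large and $m$ bounded the $\st$-index of $R_{n,m}$ equals $\sind(P)$, yielding the required $P'_g$; exchanging the roles produces $Q'_g$ of $\st$-index $\sind(Q)$.

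The main obstacle will be simultaneously enforcing the eigenvalue conditions and preserving the cycle structure. The homoclinic relation between $P'_g$ and $P_g$ follows from the inclination lemma applied to the long stretch of $R_{n,m}$ near $P$: the invariant manifolds of $R_{n,m}$ $C^1$-accumulate on those of $P_g$ and vice versa, producing transverse homoclinic intersections. The analogous argument handles $Q'_g$, and the new heterodimensional cycle between $P'_g$ and $Q'_g$ follows from another $\lambda$-lemma argument since both orbits shadow the same loop through $X$ and $Y$. The delicate point throughout is Franks' lemma: the perturbations used for realification must all be supported in small disjoint neighbourhoods of $X$ and $Y$ so as not to disturb either the saddles $P,Q$ or their heteroclinic connections, while remaining flexible enough both to close the pseudo-orbit and to rotate the central directions into alignment.
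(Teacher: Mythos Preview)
The paper does not prove this statement: it is quoted as Theorem~2.1 of \cite{BDijmj} and used as a black box to reduce the main results to the case of real central multipliers (see Section~\ref{ss.reductiona} and Remark~\ref{r.enough}). There is therefore no proof here against which to compare your attempt.

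On its own merits, your outline is in the right spirit --- shadowing pseudo-orbits that follow the cycle, closing them with a Franks-type perturbation, and controlling the central eigenvalue by choosing the number of turns near $P$ or $Q$ so that the complex rotation becomes nearly trivial --- and this is indeed the flavour of the argument in \cite{BDijmj}. The genuine gap in your sketch is the simultaneity issue you yourself flag at the end. You manufacture $P'_g$ and $Q'_g$ by two different choices of $(n,m)$ and two sets of local perturbations; you then need, for a \emph{single} diffeomorphism $g$, a heterodimensional cycle between them. The $\lambda$-lemma gives transverse intersections (hence the homoclinic relations with $P_g$ and $Q_g$), but it does not by itself produce the quasi-transverse intersection $W^\ut(P'_g,g)\cap W^\st(Q'_g,g)$ that a coindex one cycle requires; that needs a further perturbation, and you must check it is compatible with everything already arranged. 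The clean way around this --- and the way the statement is phrased, with ``one may have $P'_g=P_g$ and/or $Q'_g=Q_g$'' --- is to proceed sequentially rather than simultaneously: if $P$ has a non-real central multiplier, first build $P'_g$ with real central multiplier and a cycle with $Q_g$ (leaving $Q$ untouched), and only then, if $Q$ also has a non-real multiplier, repeat the construction on the $Q$ side relative to the cycle with $P'_g$. This decouples the two Franks perturbations and avoids the interference problem.
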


Note that the previous theorem means the following.
\begin{remark}\label{r.non-trivial}
 Assume that the saddle
$P$ in Theorem~\ref{t.complex} has non-real central multipliers. 
Then the homoclinic class of  $P_g^\prime$ 
is non-trivial and contains $P$.
\end{remark}

There is also the following simple fact:

\begin{lemma}\label{l.homoclinicstabilization}
Consider a diffeomorphism $f$ with a heterodimensional cycle associated to
$P$ and $Q$. Suppose that there are saddles 
$P_g^\prime$ and $Q_g^\prime$ homoclinically related to $P_g$ and $Q_g$,
respectively,
with a heterodimensional cycle that can be $C^1$-stabilized. Then the initial
cycle can also be $C^1$-stabilized.
\end{lemma}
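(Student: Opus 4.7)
The plan is to promote the robust cycle produced by stabilizing the $(P'_g, Q'_g)$-cycle of $g$ into a robust cycle between \emph{larger} hyperbolic basic sets that still contain the continuations of $P$ and $Q$. The key observation is monotonicity of invariant manifolds: if $\Lambda \subset \Lambda^\star$ are both hyperbolic basic sets of the same $\st$-index, then $W^\st(\Lambda, f) \subset W^\st(\Lambda^\star, f)$ and $W^\ut(\Lambda, f) \subset W^\ut(\Lambda^\star, f)$, so any cyclic intersection for the small pair lifts automatically to one for the large pair.

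Given a $C^1$-neighborhood $\cU$ of $f$, I may assume the $g$ supplied by the hypothesis lies in $\cU$. Because $P'_g$ is homoclinically related to $P_g$, a standard Birkhoff--Smale horseshoe construction engulfs both orbits in a transitive hyperbolic basic set $\hat\Lambda_g$ of $\st$-index $\sind(P)$; the analogous $\hat\Sigma_g \supset \{Q_g, Q'_g\}$ is obtained for the unstable side. Applying the stabilization of the $(P'_g, Q'_g)$-cycle inside $\cU$, I obtain $h \in \cU$ and a $C^1$-neighborhood $\cW \subset \cU$ of $h$ on which the continuations $\Lambda_k \ni P'_k$ and $\Sigma_k \ni Q'_k$ exhibit a heterodimensional cycle for every $k \in \cW$. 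Shrinking $\cW$ if necessary, the continuations $\hat\Lambda_k, \hat\Sigma_k$ are also defined on $\cW$ and contain $P_k, Q_k$.

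The main step is merging. At every $k \in \cW$, the saddle $P'_k$ belongs to both $\Lambda_k$ and $\hat\Lambda_k$ and, by transitivity, is homoclinically related to a dense set of periodic orbits of each. Hence $\Lambda_k \cup \hat\Lambda_k$ embeds in a single transitive hyperbolic basic set $\Lambda^\star_k \ni P_k, P'_k$; analogously $\Sigma^\star_k \supset \Sigma_k \cup \hat\Sigma_k$ is built using $Q'_k$. After shrinking $\cW$ once more so that $\Lambda^\star_k, \Sigma^\star_k$ are genuine continuations of $\Lambda^\star_h, \Sigma^\star_h$ throughout $\cW$, the monotonicity observation yields a cycle between $\Lambda^\star_k \ni P_k$ and $\Sigma^\star_k \ni Q_k$ for every $k \in \cW$, which stabilizes the original cycle of $f$. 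The only nontrivial point, which I expect to be essentially folklore, is this merging step: two hyperbolic basic sets of the same $\st$-index that share a periodic orbit must be shown to lie in a common transitive hyperbolic basic set with continuation well defined throughout a uniform neighborhood. This follows from the shadowing lemma and the $C^1$-openness of transverse homoclinic intersections, but deserves care to ensure uniformity in $k$.
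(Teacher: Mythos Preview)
Your proof is correct and follows essentially the same approach as the paper: enlarge the basic sets from the stabilization so that they contain the continuations of $P$ and $Q$, then use the monotonicity $W^{\st,\ut}(\Lambda') \subset W^{\st,\ut}(\Lambda^\star)$ to lift the robust cycle. The only difference is that the paper does the enlargement in one step at $h$ (since $P'_h\in\Lambda'_h$ is homoclinically related to $P_h$, there is directly a basic set $\Lambda_h\supset \Lambda'_h\cup\{P_h\}$) rather than first building $\hat\Lambda_g$ and then merging; this also makes your uniformity-in-$k$ concern evaporate, because the continuation $(\Lambda^\star_h)_k$ of a single basic set built at $h$ automatically contains $(\Lambda'_h)_k$ and $P_k$ on a whole neighborhood of $h$.
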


\begin{proof}
The stabilization of the cycle associated to $P_g'$ and $Q_g'$ means that
there is $h$ arbitrarily close to $g$ having a pair of basic
hyperbolic sets $\La'_h\ni P'_h$ and $\Si'_h\ni Q'_h$ 
with a robust cycle. Since the saddles $P_h$ and $P_h'$ are homoclinically
related there is a basic set $\La_h$ containing $\La'_h$ and $P_h$.
Similarly, there is a basic set  $\Si_h$ containing $\Si'_h$ and $Q_h$.
Since $W^{i} (\La_h,h) \supset W^{i}(\La_h',h)$ and  
$W^{i} (\Si_h,h) \supset W^{i}(\Si_h',h)$, $i=s,u$,
it is immediate that there is a robust cycle associated to $\La_h\ni P_h$
and $\Si_h\ni Q_h$.
\end{proof}

\begin{remark}\label{r.enough}
Theorem~\ref{t.complex} and Lemma~\ref{l.homoclinicstabilization}
mean that to prove Theorems~\ref{t.homoclinic} and \ref{t.complexornontwisted}
it is enough 
to stabilize cycles with real central multipliers (indeed this is the sort of
cycles considered in Theorem~\ref{th.proposition}).
 Thus  in what follows we will focus on this type cycles.
\end{remark}

\subsection{Strong homoclinic intersections and blenders} 
A key ingredient for obtaining robust
heterodimensional cycles in \cite{BDijmj} is the notion of a
{\emph{blender.}} 
A blender is a hyperbolic set with some additional geometrical intersection
properties that guarantee some robust intersections, 
see Section~\ref{sss.cublender} and 
Definition~\ref{d.blender}.  
 The key  step in \cite{BDijmj} to obtain robust cycles is  that coindex one cycles yield periodic points of saddle-node/flip 
type with
{\emph{strong homoclinic intersections}}: the strong stable
manifold of the saddle-node/flip intersects its strong unstable manifold,
see Definition~\ref{d.stronghomoclinic}. These strong homoclinic
intersections generate blenders yielding robust cycles, see
Proposition~\ref{p.BDblendera}.

In \cite{BDijmj} the generation of blenders is not
controlled and in general the saddle-node/flip has ``nothing to do"
with the saddles in the initial cycle. 
This is why in \cite{BDijmj} the hyperbolic
sets with robust cycles are not related (in general) to the saddles
in the initial cycle. Here
we control the ``generation" of the saddle-node/flip with
strong homoclinic intersections, obtaining blenders that contains the
continuation of
 a saddle in the initial cycle and intersecting the
invariant manifolds of the other saddle in the cycle. 
This configuration 
provides robust cycles associated to hyperbolic sets containing
the continuation of both initial saddles, see 
Theorem~\ref{t.p.BDblenderb}. 

We next explain 
the ``generation" of  saddle-node/flip poits with strong homoclinic intersections.

\subsection{Simple cycles and iterated function systems (IFSs).}
To analyze the dynamics of cycles with real multipliers we
borrow some constructions
and the notion of a
{\emph{simple cycle}}
 from \cite{BDijmj}, see Section~\ref{s.simple}.

In very rough terms, if a diffeomorphism has a simple cycle then
its dynamics 
in a neighborhood of the cycle is affine and preserves a partially
hyperbolic splitting $E^{\sst}\oplus E^\ct\oplus E^\uut$, where
$E^\sst$ is uniformly contracting, $E^\uut$ is uniformly
expanding, and $E^\ct$ is one-dimensional and non-hyperbolic, see
Proposition~\ref{p.simple}.
Following \cite{BDijmj}, to prove our 
results it is enough to consider 
simple cycles and their (suitable) unfoldings. 

 We consider
one-parameter families of diffeomorphisms $(f_t)_t$ unfolding a
simple cycle at $t=0$ and preserving the affine
structure associated to the splitting $E^\sst\oplus E^\ct \oplus
E^\uut$. In particular, the foliation of hyperplanes parallel to
$E^{\sst}\oplus E^\uut$ is preserved. Considering the
{\emph{central dynamics}} given by the quotient of the dynamics of
the diffeomorphism
$f_t$ by these hyperplanes one gets a one-parameter family of
iterated function systems (IFSs).
Some properties
of these IFSs are translated to properties of the
diffeomorphisms $f_t$, see Proposition~\ref{p.dictionary}. 
This IFS
provides
relevant information about the dynamics of the the diffeomorphisms $f_t$ such as, for example, the existence of saddle-nodes with strong homoclinic intersections.
Such IFSs play a role 
similar to the one of the quadratic family in the setting of
homoclinic bifurcations, compare \cite[Chapter 6.3]{PT_book}.

\subsection{Organization of the paper}\label{ss.organization}
The discussion above corresponds to 
the contents in
Sections~\ref{s.steps} and \ref{s.simple}.
The key step is to
analyze the
dynamics of the IFSs associated to simple cycles. 
Using these IFSs,
in
Section~\ref{s.notonesided} we analyze non-twisted cycles (which
is the principal case) and explain how they 
yield 
saddle-nodes/flips with strong homoclinic intersections as
well as further intersection properties, see Proposition~\ref{p.l.R}.
We  study (twisted and non-twisted) cycles with the
bi-accumulation property in Section~\ref{ss.cyclesbiaccumulatedbis}.
In Section~\ref{s.stabilization} we prove Theorem~\ref{th.proposition},
which is the main technical step in the paper.
Finally, in Section~\ref{s.proofoftheoremsAB}
we see how 
Theorems~\ref{t.homoclinic}
and \ref{t.complexornontwisted} 
 can be easily
derived from Theorem~\ref{th.proposition}.




\section{Robust cycles and
blenders}\label{s.steps}

In this section, we recall the definition and main properties of
blenders. We also  state the tools to get the
stabilization of heterodimensional cycles, see
Proposition~\ref{p.BDblendera} and
Theorem~\ref{t.p.BDblenderb}.

\subsection{Blenders} \label{sss.cublender} Let us recall
the definition of a $\cut$-blender in \cite{BDtan}. See also the
examples  in \cite{BD96} and the discussion in \cite[Chapter
6]{BDV_book}:

\begin{definition}[$cu$-blender, Definition 3.1 in \cite{BDtan}]
\label{d.blender} Let $f\colon M\to M$ be a diffeomorphism. A
transitive  hyperbolic compact set $\Ga$ of $f$ with $\uind
(\Ga)=k$, $k\ge 2$, is a {\emph{$\cut$-blender}} if there are a
$C^1$-neighborhood $\cU$ of $f$ and a $C^1$-open set $\cD$ of
embeddings of $(k-1)$-dimensional disks $D$ into $M$ such that
 for every
$g \in \cU$ and
every disk $D\in \cD$ the local stable manifold
$W^s_{\loc}(\Ga_g)$ of $\Ga_g$ 
intersects $\cD$.
The set $\cD$ is called the {\emph{superposition}}
region of the blender.
\end{definition}

\begin{remark} \label{r.blendercontinuation}
Let $\Ga$ be a blender of $f$. 
Then for every $g$ close enough to $f$ the continuation
$\Ga_g$ of $\Ga$ is a blender of $g$.
\end{remark}

In fact, the $\cut$-blenders considered in \cite{BDijmj}
to obtain robust cycles are a
special class of blenders, called {\emph{blender-horseshoes,}} see
\cite[Definition 3.8]{BDtan}. In this definition, the
blender-horseshoe $\Ga$ is the maximal invariant set in a ``cube" $C$
and has a hyperbolic splitting with three non-trivial
bundles $T_\Ga M= E^{\st}\oplus E^{\cut}\oplus E^{\uut}$, such
that the unstable bundle of $\Ga$ is $E^{\ut}=E^{\cut}\oplus
E^{\uut}$ and $E^{\cut}$ is one-dimensional.
Moreover, the set $\Ga$
is conjugate to the complete shift of two symbols. Thus it has
exactly two fixed points, say $A$ and $B$,
called
{\emph{distinguished points of the blender,}} and that 
 play a special role in the definition of a blender-horseshoe.

The definition of a blender-horseshoe involves a $Df$-invariant
strong unstable cone-field $\cC^{\uut}$ corresponding to the
strong unstable direction $E^{\uut}$, the local stable manifolds
$W^{\st}_\loc(A,f)$ and $W^\st_\loc(B,f)$ of the
distinguished saddles $A$ and $B$  (defined as the connected
component of $W^{\st}(R,f)\cap C$ containing $R$, $R=A,\,B$), and
the local strong unstable manifolds $W^{\uut}_\loc(A,f)$ and
$W^\uut_\loc(B,f)$ of  $A$  (the
component of $W^{\uut}(R,f)\cap C$ containing $R$). Recall that
the strong unstable manifold of $R$ is the only invariant manifold
of dimension $\dim (E^{\uut})$ that is tangent to $E^{\uut}$ at
$R$.

Let $\dim (E^{\uut})=u$. 
One considers {\emph{vertical disks}} through the blender, that
is, disks $\De$ of dimension $u$ 
tangent to the cone-field
$\cC^{\uut}$ joining the ``top" and the ``bottom" of the cube $C$.
Then there are two isotopy classes of vertical disks that do not
intersect $W^{\st}_\loc(A,f)$ (resp. $W^\st_\loc(B,f)$), called
disks at the right and at the left of $W^{\st}_\loc(A,f)$ (resp.
$W^\st_\loc(B,f)$). For instance,  $W^{\uut}_\loc(B,f)$ (that is
a vertical disk) is at the right of $W^{\st}_\loc(A,f)$. Similarly,
 $W^{\uut}_\loc(A,f)$ is at the left of
$W^{\st}_\loc(B,f)$. The superposition region $\cD$ of the
blender-horseshoe consists of the vertical disks in between
$W^{\st}_\loc(A,f)$ and $W^{\st}_\loc(B,f)$ (i.e., at the right of
$W^{\st}_\loc(A,f)$ and at the left of $W^{\st}_\loc(B,f)$). See
Figure~\ref{f.blender}.

\begin{figure}[hbtp]
\centering \scalebox{0.75}{\includegraphics[clip]{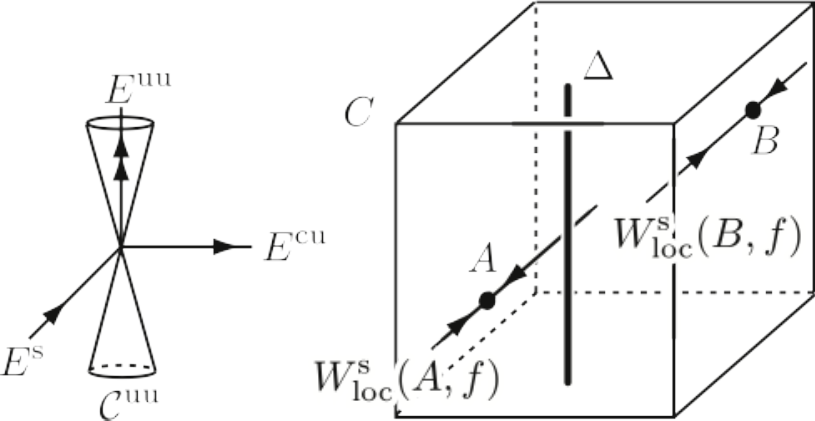}}
\caption{Vertical disks in a blender.} \label{f.blender}
\end{figure}

\subsection{Generation of blenders and robust cycles}\label{sss.generation}

To  state a criterion for the existence of robust
cycles we need some definitions.

\begin{definition}\label{d.stronghomoclinic}
Let $S$ be a periodic point of a diffeomorphism $f$.
\begin{itemize}
\item
We say that $S$ is a {\emph{partially hyperbolic saddle-node
(resp. flip)}} of $f$ if the derivative of $Df^{\pi(S)}(S)$ has
exactly one eigenvalue $\sigma$ of modulus $1$, the eigenvalue $\sigma$  
is equal to  $1$
(resp., $-1$), and there are eigenvalues $\la$ and $\beta$ of
$Df^{\pi(S)}(S)$ with $|\la|<1<|\beta|$.
\item
Consider the strong unstable (resp. stable) invariant direction
$E^{\uut}$ (resp. $E^{\sst}$) corresponding to the eigenvalues
$\kappa$ of $Df^{\pi(S)}(S)$ with $|\kappa|>1$ (resp.
$|\kappa|<1$). The {\emph{strong unstable manifold}}
$W^{\uut}(S,f)$ of $S$ is the unique $f$-invariant manifold
tangent to $E^{\uut}$ of the same dimension as $E^{\uut}$. The
{\emph{strong stable manifold}} $W^{\sst}(S,f)$ of $S$ is defined
similarly considering $E^{\sst}$.
\item
 We say that $S$ has a
{\emph{strong homoclinic intersection}} if $W^{\sst}(S,f)\cap
W^{\uut}(S,f)$ contains points which do not belong to the orbit of
$S$.
\end{itemize}
\end{definition}

\begin{proposition}[Criterion for robust cycles. Theorem 2.4 in \cite{BDijmj}]
Let $f$ be a diffeomorphism having a partially hyperbolic
saddle-node/flip $S$ with a strong homoclinic intersection. Then
there is a diffeomorphism $h$ arbitrarily $C^1$-close to $f$ with
a robust heterodimensional cycle. \label{p.BDblendera}
\end{proposition}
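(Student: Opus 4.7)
The plan is to produce the robust cycle in three stages: unfold $S$ into two hyperbolic saddles of adjacent $\sind$ linked by a coindex-one heterodimensional cycle; upgrade one of them to a blender-horseshoe using the strong homoclinic intersection; and use the superposition property of blenders to make the fragile branch of the cycle robust.

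First I would make a small $C^{1}$-perturbation localized near the orbit of $S$ so as to extend the partial hyperbolicity of $S$ to an invariant neighborhood $V$, with a splitting $E^{\sst}\oplus E^{\ct}\oplus E^{\uut}$ where $\dim E^{\ct}=1$ carries the neutral eigenvalue $\sigma=\pm 1$. A Franks-type perturbation on the orbit of $S$ then splits $\sigma$ into two hyperbolic eigenvalues on either side of the unit circle, producing two periodic saddles $S_-$ and $S_+$ with $\sind(S_+)=\sind(S_-)+1$ (in the flip case one first passes to $f^{2\pi(S)}$). Because the strong stable and strong unstable foliations vary continuously with $f$, the strong homoclinic point $X\in W^{\sst}(S,f)\cap W^{\uut}(S,f)$ persists as a strong heteroclinic intersection $X'\in W^{\sst}(S_+)\cap W^{\uut}(S_-)$, while the neutral direction provides a transverse intersection $W^{\ut}(S_-)\pitchfork W^{\st}(S_+)$ coming from the former saddle-node locus. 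This already gives a (generally non-robust) coindex-one heterodimensional cycle between $S_-$ and $S_+$.

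Second, I would turn $S_-$ into a distinguished fixed point of a blender-horseshoe $\Ga$. The strong heteroclinic point $X'$ forces that a $\uut$-strip issuing from $S_-$ and returning to a suitable cube $C$ aligned with the splitting $E^{\sst}\oplus E^{\ct}\oplus E^{\uut}$ crosses $C$ along two full-height sheets tangent to a $\cC^{\uut}$ cone field, one of them containing $W^{\uut}_\loc(S_-)$. By an additional careful $C^{1}$-perturbation one sharpens these two sheets into a genuine $2$-shift horseshoe $\Ga$ with distinguished fixed points $A=S_-$ and $B$, meeting the geometric conditions (cone invariance, vertical-strip crossings, placement of $W^{\st}_\loc(A)$ and $W^{\st}_\loc(B)$) described in Section~\ref{sss.cublender}. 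Moreover $W^{\ut}(S_+)$ is a disk of exactly the dimension $\uind(\Ga)-1$, tangent to $\cC^{\uut}$, and the unfolding places it inside the superposition region $\cD$ of $\Ga$.

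Third, Definition~\ref{d.blender} together with Remark~\ref{r.blendercontinuation} guarantees that on a $C^{1}$-neighborhood of the perturbed diffeomorphism the continuation of $W^{\ut}(S_+)$ stays in $\cD$ and hence meets $W^{\st}(\Ga)$; combined with the transverse intersection $W^{\st}(S_+)\pitchfork W^{\ut}(S_-)\subset W^{\ut}(\Ga)$ from the center direction, this produces a $C^{1}$-robust heterodimensional cycle associated to $\Ga\ni S_-$ and $S_+$ arbitrarily $C^{1}$-close to $f$. I expect the principal obstacle to be the second stage: arranging, by finitely many carefully placed $C^{1}$-perturbations that do not undo the heteroclinic connections from the first stage, the precise quantitative conditions (cone invariance, stretching, and the relative position of $W^{\st}_\loc(A)$ and $W^{\st}_\loc(B)$) that turn the pair of return sheets furnished by $X'$ into a bona fide blender-horseshoe with $W^{\ut}(S_+)$ sitting in its superposition region.
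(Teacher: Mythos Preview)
Your proposal follows essentially the same three-step approach as the paper's sketch: unfold $S$ into two hyperbolic saddles linked through the central curve, build a blender-horseshoe containing the center-expanding one using the return map furnished by the strong homoclinic intersection, and then invoke the superposition property of the blender to make the non-transverse branch of the cycle robust. Note only that your labeling is reversed relative to the paper's convention --- your $S_-$ (smaller $\sind$, center-expanding) is the paper's $S^+_g$ and is the distinguished point of the blender, while your $S_+$ is the paper's $S^-_g$ --- but this is purely cosmetic and the argument is the same.
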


Note that this result does not provide information about the sets
involved in the robust cycle. We state in Theorem~\ref{t.p.BDblenderb}
a version of this proposition providing some information about these
sets. Before proving this theorem 
let us explain the main steps of the proof of Proposition~\ref{p.BDblendera},
for further details see \cite{BDijmj}.

\smallskip

\noindent {\em Sketch of the proof of
Proposition~\ref{p.BDblendera}.} For simplicity, let us assume that
$S$ is a saddle-node of $f$ of period one.
 After a perturbation, we can suppose
that the saddle-node $S$ splits into two hyperbolic fixed points
$S^-_g$ (contracting in the central direction) and $S^+_g$
(expanding in the central direction), here $g$ is a
diffeomorphism obtained by a small the perturbation of $f$. The saddles $S^+_g$
and $S^-_g$ have different indices and the  manifolds
$W^\st(S^-_g)$ and $W^\ut(S^+_g)$ have a transverse intersection
that contains the interior of a ``central" curve joining $S^-_g$ and
$S^+_g$. Note that this intersection property is $C^1$-robust.
 The proof has three steps (see Figure~\ref{f.strongintersection}):
\begin{enumerate}
\item[{\bf (I)}]
There is a blender-horseshoe $\Ga_g$ having $S^+_g$ as a
distinguished  fixed point.
\item[{\bf (II)}]
The unstable manifold of $S^{-}_g$ contains a vertical disk
$\Delta$ in the superposition region $\cD$ of the
blender-horseshoe $\Ga_g$. Thus, by the definition of 
blender-horseshoe,
$W^{\st}(\Ga_g,g)$ intersects $W^\ut(S^-_g,g)$. Hence, as
$S^+_g\in \Ga_g$ and $W^\ut(S^-_g,g)\pitchfork W^\st(S^+_g,g)\ne
\emptyset$, there is a heterodimensional cycle associated to
$\Ga_g$ and $S^-_g$.
\item[{\bf (III)}]
The following properties are open ones:
 {\bf i)} the continuation of the hyperbolic set $\Ga_g$ to be a blender
(the elements in the definition of a blender depend
continuously on $g$, see  Remark~\ref{r.blendercontinuation}), {\bf ii)} $W^\ut(S^-_g,g)$ to contain a
vertical disk in the superposition region  $\cD$ of the blender,
and {\bf iii)} $W^\st(S^-_g,g)\pitchfork W^\ut(S^+_g,g) \ne
\emptyset$.
\end{enumerate}

\begin{figure}[htb]
\centering \scalebox{0.75}{\includegraphics[clip]{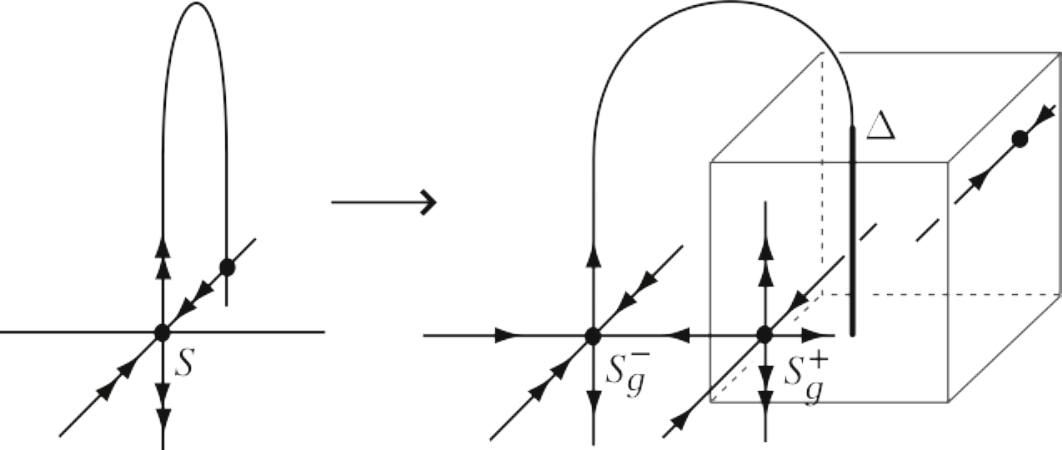}}
\caption{Proof of Proposition~\ref{p.BDblendera}.}
\label{f.strongintersection}
\end{figure}

Therefore, every diffeomorphism $h$  that is $C^1$-close to $g$
has a heterodimensional cycle associated to $S^-_h$ and
$\Gamma_h$. Since $g$ can be taken arbitrarily close to $f$ this concludes
the proof.\hfill \qed

\medskip

Next result is just a  reformulation of the
construction above that allows us to get robust cycles associated
to  sets that contain the continuations of a given saddle. 
This theorem will be the
main tool for stabilizing cycles.

\begin{theorem}
Let $f$ be a diffeomorphism, $P$ a saddle of $f$,  and
 $S$ a partially hyperbolic saddle-node/flip of $f$ such that:
\begin{enumerate}
\item
$ \sind (P)=\dim (W^{\sst}(S))+1=s+1$,
\item
$S$ has a strong homoclinic intersection,
\item
$W^{\ut}(P,f)\cap W^{\sst}(S,f)\ne \emptyset,$ and
\item
$W^{\st}(P,f)\pitchfork W^{\uut}(S,f)\ne \emptyset.$
\end{enumerate}
Then there is a  diffeomorphism $h$ arbitrarily $C^1$-close to $f$
with a robust heterodimensional cycle associated to the
continuation $P_h$ of $P$ and a transitive 
hyperbolic set $\Ga_h$ containing a hyperbolic continuation
$S_h^+$ of $S$ of $\st$-index $s$. \label{t.p.BDblenderb}
\end{theorem}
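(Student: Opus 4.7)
The plan is to adapt the three-step scheme recalled in the sketch of Proposition~\ref{p.BDblendera}, but tracking the saddle $P$ throughout. First I would take a $C^1$-small perturbation $g$ of $f$ that splits the saddle-node/flip $S$ into two hyperbolic continuations $S_g^-$ and $S_g^+$, with $\sind(S_g^-)=s+1=\sind(P_g)$ and $\sind(S_g^+)=s$. Using hypothesis (2) (the strong homoclinic intersection of $S$), the blender construction recalled in the sketch of Proposition~\ref{p.BDblendera} produces a blender-horseshoe $\Ga_g$ whose expanding-center distinguished fixed point is $S_g^+$; by Remark~\ref{r.blendercontinuation} this persists as the continuation $\Ga_h$ of $\Ga_g$ containing $S_h^+$ for every $h$ close enough to $g$.

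The key step is to show that hypothesis (3) yields a vertical disk of $W^\ut(P_g,g)$ in the superposition region $\cD$ of $\Ga_g$. Under the bifurcation, the strong stable manifold $W^\sst(S,f)$ persists locally as $W^\st(S_g^+,g)$ (since the center direction of $S_g^+$ is unstable), so hypothesis (3) gives a heteroclinic point $Y\in W^\ut(P_g,g)\cap W^\st(S_g^+,g)$. The dimensions match nicely: $\dim W^\ut(P_g)=n-s-1$ coincides with $\dim E^{\uut}$, the dimension of vertical disks, although the intersection at $Y$ is only quasi-transverse in $M$. Since $Y$ lies on $W^\st(S_g^+,g)$, the forward iterates $g^n(Y)$ approach $S_g^+$; using the dominated splitting $E^\st\oplus E^\ct\oplus E^{\uut}$ in a neighborhood of $\Ga_g$, in which $E^{\uut}$ is the dominant direction, the tangent spaces $T_{g^n(Y)}g^n(W^\ut(P_g,g))$ are pushed into the strong unstable cone field $\cC^{\uut}$. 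Thus for $n$ large enough $g^n(W^\ut(P_g,g))$ contains a vertical disk $\Delta$ through $g^n(Y)$ inside the cube $C$ defining the blender.

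After a further $C^1$-small perturbation if necessary to position $\Delta$ on the correct side of $W^\st_\loc(S_g^+,g)$, namely between $W^\st_\loc(A,g)$ and $W^\st_\loc(B,g)$ where $A=S_g^+$ and $B$ is the second distinguished fixed point, the disk $\Delta$ lies in the superposition region $\cD$. The defining property of a blender-horseshoe then gives $W^\ut(P_g,g)\cap W^\st(\Ga_g,g)\ne\emptyset$. For the other side of the cycle, hypothesis (4) provides the transverse intersection $W^\st(P_g,g)\pitchfork W^\ut(S_g^+,g)\ne\emptyset$; as $S_g^+\in\Ga_g$ and $W^\ut(S_g^+,g)\subset W^\ut(\Ga_g,g)$, this yields $W^\st(P_g,g)\pitchfork W^\ut(\Ga_g,g)\ne\emptyset$. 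The three resulting conditions — $\Ga_h$ remaining a blender-horseshoe, the existence of a vertical disk of $W^\ut(P_h,h)$ in $\cD$, and the transverse intersection $W^\st(P_h,h)\pitchfork W^\ut(S_h^+,h)\ne\emptyset$ — are all $C^1$-open, so the heterodimensional cycle between $P_h$ and $\Ga_h\ni S_h^+$ is robust for every $h$ in a $C^1$-neighborhood of $g$. Since $g$ can be chosen arbitrarily $C^1$-close to $f$, this proves the theorem.

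The main obstacle I anticipate is making the alignment-by-domination step in the second paragraph rigorous despite the quasi-transversality of the intersection at $Y$, and in particular ensuring that the produced vertical disk can be positioned in the superposition region $\cD$ rather than on the wrong side of $W^\st_\loc(S_g^+,g)$. This side-selection may require an auxiliary small perturbation that preserves both the blender-horseshoe structure and the transverse connection coming from hypothesis (4), so a careful description of the local geometry near $S_g^+$ and of the freedom in the orbit of $Y$ will be needed.
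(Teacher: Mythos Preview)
Your argument is in the right spirit, but you are making the hard choice at the key step, and the paper avoids precisely the obstacle you flag by routing through the \emph{other} hyperbolic continuation $S^-_h$ rather than $S^+_g$. Under the splitting of the saddle-node/flip, the center-stable manifold of $S$ persists as $W^{\st}(S^-_h,h)$, which has dimension $s+1$; the original quasi-transverse intersection from hypothesis (3) therefore becomes, after a further small perturbation, a \emph{transverse} intersection
\[
W^{\ut}(P_h,h)\pitchfork W^{\st}(S^-_h,h)\ne\emptyset,
\]
since now the dimensions add up to $n$. From here the inclination lemma at $S^-_h$ does all the work: $W^{\ut}(P_h,h)$ $C^1$-accumulates onto $W^{\ut}(S^-_h,h)$, and step (II) of the blender construction already guarantees that $W^{\ut}(S^-_h,h)$ contains a vertical disk in the superposition region $\cD$. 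Hence $W^{\ut}(P_h,h)$ inherits vertical disks in $\cD$ automatically---no side-selection issue, no auxiliary perturbation, and the property is open because it comes from a transverse intersection plus the $\lambda$-lemma.

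By contrast, your route via $W^{\st}(S^+_g,g)$ keeps the intersection quasi-transverse and produces vertical disks that sit exactly on $W^{\st}_\loc(S^+_g,g)$, i.e.\ on the \emph{boundary} of $\cD$ rather than in its interior; pushing them to the correct side while preserving the rest of the configuration is exactly the delicate point you anticipate, and it is what the paper's detour through $S^-_h$ is designed to eliminate. The remainder of your argument (using hypothesis (4) for the transverse side of the cycle, and the openness of the three resulting conditions) matches the paper.
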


\begin{proof}
One proceeds as in the proof of Proposition~\ref{p.BDblendera}, 
considering  a perturbation $h$ of
$g$ with saddles $S^\pm_h$ satisfying conditions (I) and (II) above and such that
$$
W^\ut(P_h,h)\pitchfork W^{\st}(S^-_h,h)\ne \emptyset.
$$
Since
$W^\ut(S^-_h,h)\pitchfork W^\st(S^+_h,h)\ne
\emptyset$,
the inclination lemma now implies that
$$
 W^\st(P_h,h)\pitchfork W^{\ut}(S^+_h,h)\ne
\emptyset,
$$
see Figure~\ref{f.P}.

\begin{figure}[hbtp]
\centering \scalebox{0.70}{\includegraphics[clip]{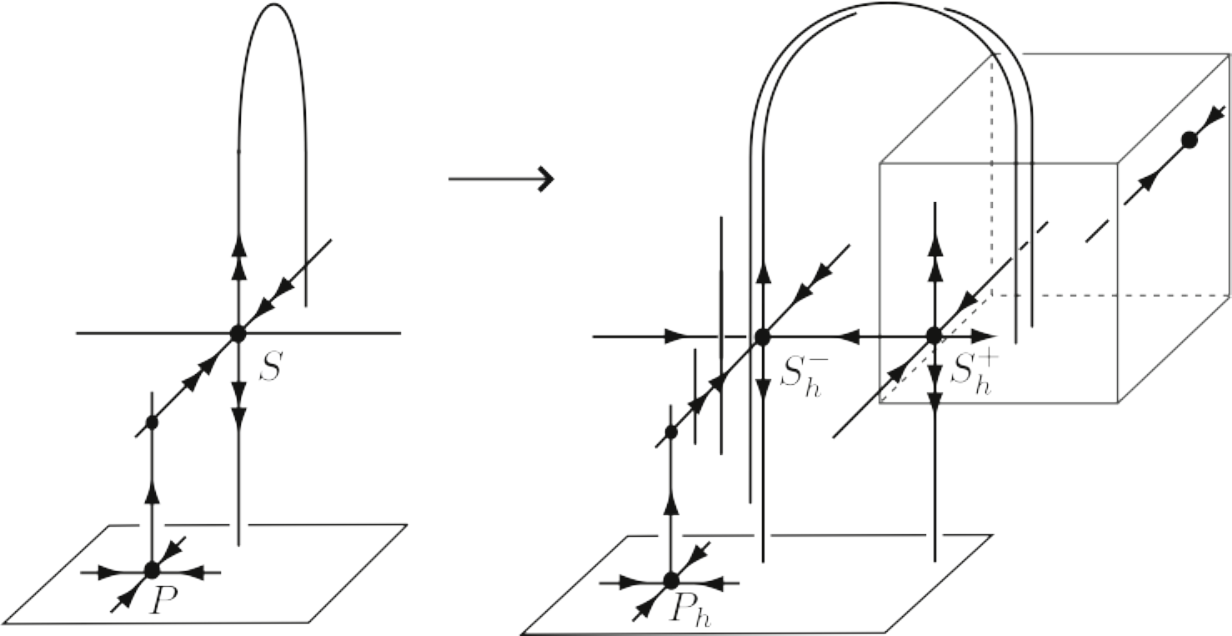}}
\caption{Proof of Theorem~\ref{t.p.BDblenderb}.} \label{f.P}
\end{figure}

Recall that $W^{\ut}(S^-_h,h)$ contains a vertical disk in the
superposition region of the blender $\Ga_h$. 
Since $W^\ut(P_h,h)\pitchfork W^{\st}(S^-_h,h)\ne \emptyset$,
the inclination
lemma implies that the same holds for $W^{\ut}(P_h,h)$. Thus we can repeat the
construction in Proposition~\ref{p.BDblendera} replacing $S^-_h$
by $P_h$. Hence $W^\ut(P_\varphi,\varphi)$ intersects
$W^\st(\Ga_\varphi,\varphi)$ for any diffeomorphism $\varphi$
close to $h$. Since $W^\st(P_\varphi,h)\cap
W^{\ut}(S^+_\varphi,\varphi)\ne \emptyset$ and $S^+_\varphi\in
\Ga_\varphi$ for every $\varphi$ close to $h$, there is a robust heterodimensional cycles associated to
$P_\varphi$ and $\Ga_\varphi$, ending the proof of the
theorem.
\end{proof}




\section{Simple cycles and systems of iterated
funtions}\label{s.simple}

In this section, following \cite{BDijmj}, we introduce simple
cycles (Section~\ref{ss.simple}) and their associated
one-dimensional dynamics (Section~\ref{ss.reduction}). We see that  given any diffeomorphism $f$ with
a co-index one cycle with real
central multipliers (associated to saddles $P$ and $Q$) there is a
diffeomorphism $g$ arbitrarily $C^1$-close to $f$ with a cycle
associated to $P$ and $Q$  whose dynamics in a neighborhood of
the cycle is affine, see Proposition~\ref{p.simple}.
In such a case we say that this cycle of $g$ is simple.

In fact, 
for a diffeomorphism $g$ with a simple cycle
there is a one-parameter family of diffeomorphisms
$(g_t)_t$, $g_0=g$,  preserving a (semi-local) partially
hyperbolic splitting $E^\sst\oplus E^c\oplus E^\uut$ such that the
bundles $E^\sst$ and $E^\uut$ are non-trivial and hyperbolic
(uniformly contracting and uniformly expanding, respectively) and the bundle
 $E^c$ is not hyperbolic and one-dimensional. We consider
the quotient dynamics by the hyperplanes $E^{\sst}\oplus E^\uut$,
obtaining a one-parameter family of one-dimensional
iteration function systems (IFSs) which describe the central
dynamics of the maps $g_t$. Properties of these IFSs are
translated to properties of the
diffeomorphisms $g_t$, see Proposition~\ref{p.dictionary}.

In Section~\ref{s.notonesided} we will  write intersection properties
 implying the existence of robust cycles
 (similar to the ones in Theorem~\ref{t.p.BDblenderb})  
 in terms of properties of the IFSs associated to simple cycles. 
 We now discuss simple cycles and their  IFSs.

\subsection{Simple cycles}\label{ss.simple}

Next proposition summarizes the results in \cite{BDijmj} about
{\emph{simple cycles}} and their {\emph{unfoldings.}}  This proposition means
that if $(f_t)$ is a ``model arc" unfolding a simple cycle
then the dynamics of
the maps $f_t$ in a neighborhood of the cycle is given by 
suitable  compositions of two
linear
maps (the dynamics nearby the saddles in the cycle) and two affine maps (iterations
corresponding to the ``transition" and the ``unfolding maps"). 

\begin{proposition}[Proposition 3.5 and Section~3.2 in
\cite{BDijmj}] Let $f$ be a  diffeomorphism having a co-index one
cycle with real central multipliers  associated to saddles $P$ and
$Q$ such that
$$
\sind (Q)+1=\sind (P).
$$ 
Then there is a
one-parameter family of diffeomorphisms $(f_t)_{t\in [-\epsilon,
\epsilon]}$, $\epsilon>0$, such that it
satisfies properties (C1)--(C3) below
and
 $f_0$ is arbitrarily close to
$f$.

 Let $s$ and $u$ be the
dimensions of $W^\st(Q,f)$ and of $W^\ut(P,f)$, respectively. There
are linear maps
\begin{itemize}
\item
$\phi_\la,\psi_\be \colon \RR\to \RR$, $\phi_\la(x)=\la\,x$ and
$\psi_\beta(x)=\beta\, (x)$,
\item
$A^\st,B^\st,T_1^\st,T_2^\st\colon \RR^s\to \RR^s$, which are
contractions (i.e., their norms are strictly less than one),
\item
$A^\ut,B^\ut,T_1^\ut,T_2^\ut\colon \RR^u\to \RR^u$, which are
expansions (i.e., their inverse maps are contractions),
\end{itemize}
such that:

\medskip

\noindent{{\bf (C1)}} There are local charts $U_P$ and $U_Q$
centered at $P$ and $Q$ such that in
these coordinates we have, for  all $t$,
$$
\begin{array}{ll}
f_t^{\pi(P)}(x^s,x^c,x^u)&=(A^\st(x^s),\phi_\lambda(x^c),
A^\ut(x^u)),
\\
f_t^{\pi(Q)} (x^s,x^c,x^u)&=(B^\st(x^s),\psi_\beta(x^c),
B^\ut(x^u)),
\end{array}
$$
where $|\la|\in (0,1)$ and $|\be|>1$, $x^s\in \RR^s$, $x^c\in
\RR$, and $x^u\in \RR^u$, and $\pi(P)$ and $\pi(Q)$ are the periods of $P$ and $Q$, respectively.

\medskip

\noindent {{\bf (C2)}}
 There is a quasi-transverse
heteroclinic point $Y_P\in W^{\st}(Q,f_0)\cap W^\ut(P,f_0)$ in
$U_P$
such that, in  the coordinates in the chart $U_P$, it holds:
\begin{enumerate}
\item\label{i.qtP}
For every $t$,
 $Y_P=(0^s,0,a^u)\in W^\ut_\loc(P,f_t)$,  $a^u\in\RR^u$.
\item
There is a neighborhood $C^\st(Y_P)$ of $Y_P$  in
$W^\st(Q,f_0)\cap U_P$  of the form
$(-1,1)^{s}\times\{(0,a^{u})\}$.
\item\label{i.qtQ}
There is $\tpq\in \NN$ such that for all $t$
$$
Y_{Q,t}=(a^s,t,0^u)= f_t^{\tpq} (Y_P)\in U_Q\cap W^\ut(P,f_t),
\quad a^s\in\RR^s,
$$
and
$$
Y_{Q,t}\in C^\ut(Y_{Q,t})=\{(a^s,t)\}\times (-1,1)^{u} \subset
W^\ut(P,f_t)\cap U_Q.
$$
\item
There is a neighborhood $U_{Y_P}$ of $Y_P$, $U_{Y_P}\subset U_P$,
such that
$$
\mfT_{1,t}=f_t^\tpq \colon U_{Y_P}\to f^\tpq(U_{Y_P}) \subset U_Q
$$
is an affine map of the form
\[
\begin{split}
 \mfT_{1,t}(x^s,x^c,x^u) &=
\mfT_{1}(x^s,x^c,x^u)+(0,t,0)\\
&= \big(T_1^\st(x^s), \pm x^c, T^\ut_1(x^u)\big)+
\big(a^s,t,-T_1^\ut (a^u)\big)\\
&=\big(T_1^\st(x^s)+a^s_t, \theta_{1,t}( x^c),
T^\ut_1(x^u)-T_1^\ut (a^u)\big).
\end{split}
\]
\end{enumerate}

\begin{figure}[hbtp]
\centering \scalebox{0.75}{\includegraphics[clip]{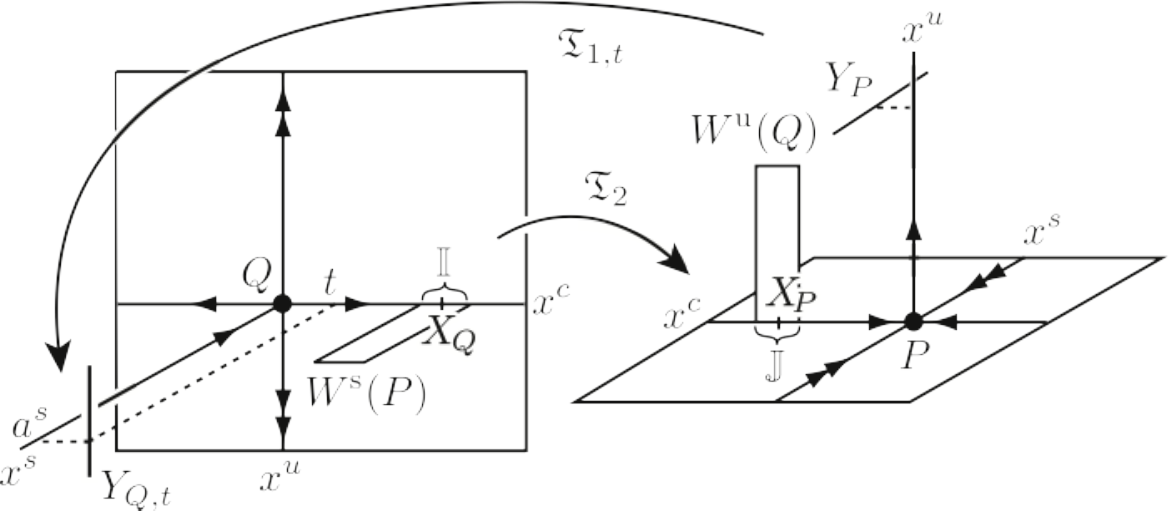}}
\caption{} \label{f.simple}
\end{figure}

\noindent {{\bf (C3)}} For every $t$, there is a point $X_Q\in
U_Q$  in $W^{\ut}(Q,f_t)\pitchfork
W^\st(P,f_t)$  (independent of $t$)
such that, in the  coordinates in the chart
$U_Q$, it holds:
\begin{enumerate}
\item
\label{i.intervalI} $X_Q=(0^s,1,0^u)$ and there is $\delta>0$ such
that
$$
X_Q\subset \II=\{0^s\}\times [1-\delta,1+\delta]\times \{0^u\}
\subset W^{\ut}(Q,f_t)\pitchfork W^\st(P,f_t).
$$
\item
There is $\tqp\in \NN$ such that $X_P=f^\tqp_t (X_Q)=(0,-1,0)\in
U_P$ and
$$
X_P\in  \JJ=f^\tqp_t(\II)=\{0^s\}\times [-1-\delta,-1+\delta]\times
\{0^u\}\subset U_P.
$$
\item
There is a neighborhood $U_{X_Q}$ of $X_Q$,  $U_{X_Q}\subset U_Q$,
such that
$$
\mfT_{2,t}=\mfT_2=f^\tqp_t \colon U_{X_Q}\to f^\tqp(U_{X_Q})
\subset U_P
 $$
is an affine map of the form
\[
\begin{split}
 \mfT_{2}(x^s,x^c,x^u)&=\big(T_2^\st(x^s),
\pm (x^c-1), T_2^\ut (x^u)\big) + (0^s,-1,0^u)\\
&=\big(T_2^\st(x^s), \theta_2 (x^c), T_2^\ut (x^u)\big).
\end{split}
\]
\end{enumerate}
\label{p.simple}
\end{proposition}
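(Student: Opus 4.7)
The proposition is a pure $C^{1}$-perturbation result: starting with a coindex one cycle with real central multipliers, we must produce a nearby model arc whose local dynamics is linear near the two saddles and affine along the two heteroclinic transitions, with the prescribed partially hyperbolic coordinate structure. Since the statement is imported verbatim from \cite{BDijmj}, the plan is to follow the scheme used there rather than to reprove everything from scratch, and to emphasize the points that truly need care.

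The first step is local linearization. Because both central multipliers $\lambda$ and $\beta$ are real with multiplicity one, and $|\lambda|$ (resp.\ $|\beta|$) is strictly larger (resp.\ smaller) in modulus than the remaining contracting (resp.\ expanding) eigenvalues, a standard application of Franks' lemma together with a $C^{1}$-small perturbation along the orbits of $P$ and $Q$ allows one to take $f_0$ so that $f_{0}^{\pi(P)}$ and $f_{0}^{\pi(Q)}$ coincide, in charts $U_{P}$, $U_{Q}$, with the direct sum of contracting/expanding linear maps and the one-dimensional central homothety dictated by $\lambda$ and $\beta$. This gives (C1) and, as a by-product, straightens $W^{s}_{\loc}$ and $W^{u}_{\loc}$ of $P$ and $Q$ into coordinate subspaces in those charts. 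The resulting central bundle extends by invariance along the orbits of the heteroclinic points involved in the cycle, giving the semi-local partially hyperbolic splitting $E^{\sst}\oplus E^{\ct}\oplus E^{\uut}$ to work with.

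The second step is to put the two connections into normal form. Pick $Y\in W^{u}(P,f_{0})\cap W^{s}(Q,f_{0})$; for dimensional reasons ($\dim W^{u}(P)+\dim W^{s}(Q)=n-1$) a generic such $Y$ is quasi-transverse, and after a further $C^{1}$-small perturbation supported in a flow-box disjoint from the linearizing neighborhoods we may arrange that a forward iterate $Y_{P}$ of $Y$ lies in $U_{P}$ exactly on $W^{u}_{\loc}(P)$ at the prescribed position $(0^{s},0,a^{u})$, and that a later iterate $Y_{Q,0}=f_{0}^{\tau_{p,q}}(Y_{P})$ lies in $U_{Q}$ on $W^{s}_{\loc}(Q)$ at $(a^{s},0,0^{u})$. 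Next, using the Franks-type construction in a small neighborhood $U_{Y_{P}}$ disjoint from the orbits of $P$ and $Q$, we replace the nonlinear transition $f_{0}^{\tau_{p,q}}\colon U_{Y_{P}}\to U_{Q}$ by an affine map $\mathfrak{T}_{1}$ of the prescribed block-triangular form; the $\pm$ on the central coordinate is forced by the sign of $\sign(\mathfrak{T}_{1})$ and the central-orientation behavior of the original derivative. The same procedure applied to a transverse heteroclinic point $X\in W^{u}(Q,f_{0})\pitchfork W^{s}(P,f_{0})$ (which is $1$-dimensional, hence contains a small central interval $\II$) produces $X_{Q}=(0^{s},1,0^{u})$, its $\tau_{q,p}$-iterate $X_{P}=(0^{s},-1,0^{u})$, and the affine transition $\mathfrak{T}_{2}$ of (C3). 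This settles (C2)(1)--(3), (C2)(4) up to the additive $t$-term, and all of (C3).

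The final step is to introduce the unfolding parameter. Using a one-parameter family of $C^{1}$-small local perturbations of $\mathfrak{T}_{1}$ supported in $U_{Y_{P}}$ we translate the image of $Y_{P}$ in the central coordinate only, obtaining $\mathfrak{T}_{1,t}=\mathfrak{T}_{1}+(0,t,0)$; the cycle is at $t=0$ and the $t$-family breaks the quasi-transverse connection without affecting (C1), (C3) or the affine form of $\mathfrak{T}_{1}$. I expect the main technical obstacle to be the simultaneous achievement of linearization at the two saddles \emph{and} affine form for the two transitions while keeping control of the $C^{1}$-distance to $f$: each of the successive perturbations has to be performed in pairwise disjoint neighborhoods (the charts $U_{P}$, $U_{Q}$, and the flow-boxes $U_{Y_{P}}$, $U_{X_{Q}}$ along the orbits of the connections) and the strong (un)stable foliations used to carry the normal forms across must be shown to match up coherently; this is precisely the content of the construction worked out in \cite{BDijmj}, which we invoke to conclude.
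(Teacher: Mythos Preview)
Your proposal is appropriate: the paper does not give an independent proof of this proposition at all, but simply imports it from \cite{BDijmj} (it is labeled ``Proposition~3.5 and Section~3.2 in \cite{BDijmj}'' and is followed immediately by Definition~\ref{d.simplecycles} with no proof environment). Your sketch of the Franks-lemma linearization at $P$ and $Q$, the affine normalization of the two transitions via perturbations in disjoint flow-boxes, and the introduction of the central translation parameter $t$ is exactly the outline of the construction carried out in \cite{BDijmj}, and your explicit deferral to that reference for the coherence of the successive perturbations matches what the paper does.
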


According to \cite[Sections
3.1-2]{BDijmj}, we give the following definition.

\begin{definition}[Simple cycles]
\label{d.simplecycles} The map $f_0$ in Proposition~\ref{p.simple}
has  a {\emph{simple cycle}} and $(f_t)_{t\in [-\epsilon,
\epsilon]} $ is a {\emph{model unfolding family}} of $f_0$.
\begin{itemize}
\item
$\mfT_{1,t}$ and $\mfT_{2}$ are the {\emph{unfolding}} and the
{\emph{transition maps}},  
\item
$\theta_{1,t}$ and $\theta_2$ are
the {\emph{central unfolding}} and the {\emph{central transition
maps,}}
\item $\tpq$ and $\tqp$ are
the {\emph{unfolding}} and the {\emph{transition times,}}
\item
$\lambda$ and $\beta$ are the {\emph{central multipliers,}} 
and
\item
$\phi_\la(x)=\la\,x$ and $\psi_\beta(x)=\be\,x$ are the
{\emph{linear central maps}} of the cycle.
\end{itemize}
\end{definition}

\begin{remark}
Since we are only interested in the dynamics in the central
direction of the simple cycle, we denote the simple cycle and its
unfolding model
 by $\scyc (f,Q,P,\be, \la,\pm_1, \pm_2)$, where the
 symbols $\pm_1$ and $\pm_2$ refer to the orientation preservation
 or reversion of the maps $\mfT_{1}$ and $\mfT_2$, respectively.
These symbols coincide with the choices of $\pm$ in (C2)(4) and (C3)(3).
To emphasize the unfolding and the transition times
$\tpq$ and $\tqp$ we will write $\scyc (f,Q,P,\be,
\la,\pm_{1}, \pm_2, \tpq,\tqp)$.
\end{remark}

We now state some generalizations of the
simple cycles above.

\subsubsection{Simple cycles with homoclinic intersections and 
semi-simple cycles}
\label{ss.simpleshomoclinic}
In our constructions we will  consider cycles associated
to saddles with  non-trivial homoclinic classes. We want that some of
these homoclinic intersections associated to this saddle were
``detected" by the cycle and ``well posed" in relation to it.
This leads to the next definition.

\begin{definition}[Simple cycles with adapted homoclinic
intersections] Consider a simple cycle 
$\scyc (f,Q,P,\be, \la,\pm_{1},
\pm_2)$. 
Write  $f=f_0$ and let $(f_t)_{t\in
[-\ve,\ve]}$ be a model unfolding family of $f_0$. 
The family
$(f_t)_{t\in [-\ve,\ve]}$ has {\em{adapted homoclinic
intersections}} (associated to $P$) if it satisfies conditions
(C1)--(C3) in Proposition~\ref{p.simple} and

\medskip

\noindent {{\bf (C4)}} In the local coordinates in $U_Q$, there is
$\bar a^s\in (-1,1)^s$ such that
$$
\Delta_0=\{(\bar a^s,1)\}\times [-1,1]^u\subset W^\ut(P,f_t), \quad
\mbox{for every  $t$ close to $0$.}
$$
This implies that
$(\bar a^s,1,0)$ is a transverse homoclinic point of $P$ of
$f_t$ for all $t$ close to $0$.

\medskip


The  family
$(f_t)_{t\in [-\ve,\ve]}$  has a {\emph{sequence of
adapted homoclinic intersections}} (associated to $P$) if it
satisfies conditions (C1)--(C4) and 

\medskip

\noindent {{\bf (C5)}} In the local coordinates in $U_Q$, 
for every $t$ close to $0$
there
are sequences 
$$
\bar a_i^s\to \bar a^s \quad \mbox{and} \quad 
x_i\to 1, \qquad
\bar
a_i^s\in (-1,1)^s \quad \mbox{and} \quad x_i \in (1-\delta, 1+\delta),
$$ such
that 
$$
\De_i=\{(\bar a_i^s,x_i)\}\times
[-1,1]^u\subset W^\ut(P,f_t)\quad \mbox{for every  $t$ close to $0$.}
$$  
Moreover, the orbits by $f_t$ of 
the disks $\De_i$, $i\ge 0$, are pairwise disjoint. 

As above, this implies that
$(\bar a_i^s,x_i,0)$ is a
transverse homoclinic point of $P$ of $f_t$.

\medskip

In these cases, we say that $f_0$ has a {\em{simple cycle with an
adapted (sequence of) homoclinic intersection(s).\/}}
\label{d.adaptedsimple}
\end{definition}

Since we will  consider perturbations of 
simple cycles, in some cases we will need to consider diffeomorphisms
with ``simple cycles" such that the maps $\psi_\be$ and $\phi_\la$
in Proposition~\ref{p.simple} are not linear.

\begin{definition}[Semi-simple cycles]
\label{d.nonaffinesimple}  A diffeomorphism $f$ has a
{\emph{semi-simple cycle}} associated to saddles $P$ and $Q$
if it satisfies Proposition~\ref{p.simple} where the linear central maps
$\phi_\la$ and $\psi_\be$ in (C1) are replaced by maps $ \tilde
\phi_\la, \tilde\psi_\beta\colon \RR \to \RR $ with
$$
\tilde \phi_\la(0)=\tilde \psi_\beta(0)=0, \quad \tilde
\phi_\la^\prime(0)=\la, \quad \tilde \psi_\be^\prime(0)=\beta.
$$
For such a semi-simple cycle we use the notation $\sscyc
(f,Q,P,\tilde\psi_\be, \tilde\phi_\la,\pm_{1}, \pm_2)$.
\end{definition}

\subsection{Twisted and non-twisted cycles}\label{ss.twistednontwisted}
To a simple cycle $\scyc (f,Q,P,\be, \la,\pm_{1},
\pm_2)$ we associate signs $\sign(Q)$, $\sign(P)$, and
$\sign(\mfT_{1})$
 in $\{+,-\}$ by the
following rules:
  \begin{itemize}
    \item $\sign(Q)=+$ if $\beta>0$ and $\sign(Q)=-$ if $\beta<0$,
    \item  $\sign(P)=+$ if $\lambda>0$ and  $\sign(P)=-$ if $\lambda<0$, and
    \item $\sign(\mfT_{1})=+$  if  $\pm_1=+$ (i.e., $\theta_{1,0}(x^c)=x^c$)
    and $\sign(\mfT_{1})=-$  if $\pm_1=-$  (i.e., $\theta_{1,0}(x^c)=-x^c$).
    \end{itemize}

\begin{definition}[Twisted and non-twisted cycles]
\label{d.twistenontwisted} We say that a simple cycle \newline
$\scyc
(f,Q,P,\be, \la,\pm_{1}, \pm_2)$ is {\emph{twisted}} if
$(\sign(Q),\sign(P),\sign(\mfT_{1}))= (+,+,-)$. Otherwise the
cycle is {\emph{non-twisted.}}

 A diffeomorphism $f$ with a co-index one
cycle with real central multipliers (associated to $P$ and $Q$) is
{\emph{twisted}} (resp. {\emph{non-twisted}}) if there is a
diffeomorphism $h$ arbitrarily $C^1$-close to $f$ with a twisted (resp.
non-twisted) simple cycle associated to $P$ and $Q$.
\end{definition}


Next lemma means that after a perturbation
non-twisted  cycles can be chosen satisfying $(\text{sign}(Q),\text{sign}(P),
\text{sign}(\mfT_{1}))=(\pm,\pm,+)$ (i.e., the case $(-,-,-)$ can be
discarded).

\begin{lemma}\label{l.reduction}
Consider a non-twisted simple cycle
 $\scyc (f,Q,P,\be, \la,\pm_{1},
\pm_2)$. 
Then there is a
diffeomorphism $g$ arbitrarily close to $f$ with a simple cycle
associated to $P$ and $Q$ of such that
$$
(\sign (Q),\sign (P), \sign (\mfT^g_{1}))=(\pm,\pm,+).
$$
This notation  emphasizes that  $\mfT^g_{1}$ is
the unfolding map of the  cycle associated to $g$.
\end{lemma}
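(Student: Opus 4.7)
The plan is to handle each non-twisted triple $(\sign(Q),\sign(P),\sign(\mfT_1))$ with $\sign(\mfT_1)=-$ by re-indexing the heteroclinic orbit and then renormalising the resulting structure into a genuine simple cycle via a $C^1$-small perturbation. First I would enumerate the cases: among the eight triples in $\{+,-\}^3$, the non-twisted ones with $\sign(\mfT_1)=-$ are exactly $(+,-,-)$, $(-,+,-)$ and $(-,-,-)$, since the twisted triple $(+,+,-)$ is excluded by hypothesis. In each of these three cases at least one of $\sign(\la)$ and $\sign(\be)$ equals $-$.

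Next I would treat the case $\sign(\la)=-$; the case $\sign(\be)=-$ is symmetric, obtained by exchanging the roles of $P$ and $Q$ via $f\mapsto f^{-1}$. The heteroclinic orbit $\cO(Y_P)\subset W^\ut(P,f)\cap W^\st(Q,f)$ contains the backward iterate $Y_P':=f^{-\pi(P)}(Y_P)$, which still lies in the linearising chart $U_P$ after possibly enlarging it; in that chart $Y_P'=(0^s,0,(A^\ut)^{-1}(a^u))$. Using $Y_P'$ as the new quasi-transverse reference, the new unfolding time becomes $\tpq+\pi(P)$ and on a neighbourhood of $Y_P'$ the new unfolding map factors as $\mfT_1\circ f^{\pi(P)}$; its restriction to the central fibre is $x^c\mapsto(\pm_1)\la\,x^c+t$, whose linear part has sign $\sign(\mfT_1)\cdot\sign(\la)=(-)\cdot(-)=+$.

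I would then reapply Proposition~\ref{p.simple} (equivalently, the normal-form scheme of \cite[Sections~3.1--3.2]{BDijmj}) to $f$ with $Y_P'$ in place of $Y_P$ as the prescribed quasi-transverse reference, obtaining a diffeomorphism $g$ arbitrarily $C^1$-close to $f$ carrying a genuine simple cycle $\scyc(g,Q,P,\be',\la',\pm_1\cdot\sign(\la),\pm_2')$. Because $\sign(\la)$ and $\sign(\be)$ are the signs of the real central eigenvalues at the continuations $P_g$ and $Q_g$, they are preserved under $C^1$-small perturbations, so the resulting sign triple is the required $(\pm,\pm,+)$.

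The main obstacle is the renormalisation step: in the shifted configuration the linear part of the central unfolding map has modulus $|\la|\neq 1$, so the normal form demanded by condition (C2)(4) does not hold on the nose. What makes the argument go through is that the sign of the central derivative is a $C^1$-isotopy invariant whereas its modulus can be absorbed by combining the rescaling freedom of the central coordinates in $U_P$ and $U_Q$ with a $C^1$-small local perturbation distributed along the transition orbit, exactly as in the normal-form construction of Proposition~\ref{p.simple}. Verifying that this perturbative scheme is compatible with the prescribed heteroclinic reference $Y_P'$ is where one must invoke the detailed construction from \cite{BDijmj}; modulo this, the sign bookkeeping above delivers the lemma.
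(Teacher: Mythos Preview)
Your idea of re-indexing the heteroclinic orbit to flip the sign of the central unfolding derivative is exactly the mechanism the paper uses, but your execution differs in a way that leaves a genuine gap. You shift by a \emph{single} period $\pi(P)$, obtaining a new central derivative of modulus $|\lambda|$; the paper instead replaces $\mfT_{1,0}$ by $(Df^{\pi(Q)})^m\circ\mfT_{1,0}\circ(Df^{\pi(P)})^n$ with $n,m$ arbitrarily large, chosen so that $\lambda^n\beta^m<0$ and $K^{-1}<|\lambda^n\beta^m|<K$ for a fixed $K$. The central derivative is then $|\lambda^n\beta^m|$, uniformly bounded, while the orbit segment over which the normalising perturbation is distributed has length of order $n+m$. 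The per-iterate multiplicative correction needed to bring the modulus to $1$ is therefore an $(n+m)$-th root of a bounded quantity and tends to $1$, which is what makes the perturbation $C^1$-small.

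Your version lacks this mechanism. The orbit segment from $Y_P'$ to $f^{\tpq}(Y_P)$ has \emph{fixed} length $\tpq+\pi(P)$, and the modulus defect $|\lambda|\ne 1$ is also fixed; spreading a fixed multiplicative correction over a fixed-length orbit yields a perturbation of definite, not arbitrarily small, size. The rescaling of central coordinates cannot save this: rescaling by $\alpha$ in $U_P$ and $\gamma$ in $U_Q$ multiplies the central derivative of $\theta_1$ by $\gamma/\alpha$ and that of $\theta_2$ by $\alpha/\gamma$, so the \emph{product} of the two moduli is a rescaling invariant. In your shifted configuration this product equals $|\lambda|\cdot 1\ne 1$, hence no rescaling brings both to modulus $1$ as the normal forms (C2)(4) and (C3)(3) demand. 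Finally, invoking Proposition~\ref{p.simple} as a black box does not close the gap, since that proposition does not control the sign of the resulting $\mfT_1$; controlling sign and modulus simultaneously is precisely the content of the lemma, and the paper achieves it directly via the long-orbit composition, deferring to \cite[Proposition~3.5]{BDijmj} only for the routine perturbative bookkeeping once $n$ and $m$ have been chosen.
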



\begin{proof}
If $ \sign (\mfT_{1})=+$ we are done. If  $\sign
(\mfT_{1})=-$ then the definition of non-twisted cycle implies that
at least one of the central multipliers $\la$ and $\be$ of
 the cycle is negative. To prove the lemma we fix  
 a constant $K>0$ (with
$K>|\be|^2$ and $K^{-1}<|\la|^2$) and replace 
the unfolding map
$\mfT_{1,0}$ by a
composition of the form
$$
(Df^{\pi(Q)})^m \circ \mfT_{1,0}\circ (Df^{\pi(P)})^n,
$$
where $n$ and $m$ are arbitrarily large and
$$
\la^n\,\beta^m<0 \quad \mbox{and} \quad K^{-1}<|\la^n\,\beta^m|<K.
$$
In this way, we get a new ``unfolding map" $\bar
\mfT_{1,0}=f^m\circ \mfT_{1,0}\circ f^n$, defined on a small
neighborhood of $f^{-n}(Y_P)$, where $Y_P\in
W^\st(Q,f)\cap W^\ut(P,f)$   is the heteroclinic
point in (C2) in
Proposition~\ref{p.simple}.
By construction, the  central component $\bar
\theta_{1,0}$ of $\bar
\mfT_{1,0}$
satisfies
$$
\bar \theta_{1,0}(x^c)=-\la^n\,\beta^m \,
x^c=|\la^n\,\beta^m|\, x^c.
$$
Consider now the segment of orbit
$$
\{ f^{-n}(Y_P),\dots , Y_P, \dots , f^{\tpq}(Y_P),\dots,
f^{\tpq+m}(Y_P)\}.
$$
 Since $n$ and $m$ are arbitrarily big and $K^{-1}<|\la^n\,\beta^m|<K$,
we can modify the map $f$ along this segment of orbit
 to get $\bar
\theta_{1,0}(x^c)=x^c$. This perturbation can be taken arbitrarily
small if $n$ and $m$ are arbitrarily large. Therefore the new
simple cycle is of type $(\pm,\pm,+)$.
This completes the sketch of the proof of the lemma.
 For further details see
\cite[Proposition 3.5]{BDijmj}.
\end{proof}

\subsection{Quotient dynamics. Families of iterated function
systems}\label{ss.reduction} In what follows, $(f_t)_{t\in
[-\epsilon,\epsilon]}$ is a model unfolding family associated to a
diffeomorphism $f=f_0$ with a semi-simple cycle. 
We use the notation in Proposition~\ref{p.simple}.
Next remark allows us to consider (in a neighborhood of a semi-simple
cycle) the quotient dynamics by the strong stable/unstable
hyperplanes.

\begin{rem}\label{r.preserve}
{\em{Consider a semi-simple cycle
$\sscyc
(f,Q,P,\tilde\psi_\be, \tilde\phi_\la,\pm_{1}, \pm_2)$ and its
mo\-del unfolding map  $(f_t)_{t\in
[-\ve,\ve]}$, where $f_0=f$. Consider the
partially hyperbolic splitting $E^{\sst}\oplus E^\ct\oplus
E^{\uut}$, defined over the orbits of $P$ and $Q$, that in the
local charts $U_P$ and $U_Q$ is of the form
$$
E^{\sst}=\RR^s\times\{(0,0^u)\}, \quad E^\ct=\{0^s\}\times
\RR\times\{0^u\}, \quad E^{\uut}=\{(0^s,0)\}\times \RR^u.
$$
This splitting  is extended to $U_P\cup U_Q$ as constant bundles.
Proposition~\ref{p.simple} implies that the maps $\mfT_{1,t}$ and
$\mfT_{2}$ are affine maps preserving $E^{\sst}\oplus E^\ct\oplus
E^{\uut}$.

The open set $V$ defined by
\begin{equation}
\label{e.v} V=U_P\cup U_Q \cup  \left( \bigcup_{i=0}^{\tqp}
f_0^i(U_{X_Q}) \right) \cup \left( \bigcup_{i=0}^{\tpq}
f_0^{i}(U_{Y_P}) \right)
\end{equation}
is the {\emph{neighborhood associated to the  cycle.}} For
small $t$, we consider  the maximal invariant set $\La_t(V)$  of $f_t$ in $V$,
$$
\La_t (V)= \bigcap_{i\in \ZZ} f_t^i(V).
$$
By construction, for $f_t$ there is a partially hyperbolic extension of the 
splitting $E^{\sst}\oplus E^\ct\oplus
E^{\uut}$
 over the set $\La_t(V)$. With a slight abuse of notation,
we also denote this extension by $E^{\sst}\oplus E^\ct\oplus
E^{\uut}$.
}}
\end{rem}

This remark implies that the returns of
points  $X\in U_{X_Q} \cap \La_t(V)$ to $U_{X_Q}$, 
$$
X\in U_{X_Q}
\cap \La_t(V) \mapsto f_t^i(X)\in U_{X_Q},
$$
 preserve the
codimension one foliation $\RR^s\times \{x^c\} \times \RR^u$
tangent to $E^{\sst}\oplus E^{\uut}$. We consider the ``quotient
dynamics" by these hyperplanes, obtaining a one parameter family
 of  iterated function systems (IFS) defined
on the interval $\II=[1-\de,1+\de]$ (see item (\ref{i.intervalI})
in (C3) in Proposition~\ref{p.simple}). This family 
describes the ``central" dynamics of these returns. 
We will provide in Proposition~\ref{p.dictionary} a 
``dictionary" translating
properties of this IFS to properties of the diffeomorphisms
$f_t$. These properties
are about the existence of periodic orbits, homoclinic and
heteroclinic intersections, and cycles.


\subsubsection{Families of IFSs induced by the quotient dynamics}\label{sss.quotient}
Consider a semi-simple cycle $\sscyc (f,Q,P,\psi_\be, \phi_\la,\pm_1,
\pm_2)$ and its model unfolding family $(f_t)_{t\in [-\epsilon,
\epsilon]}$, here $f=f_0$.
Consider the segment $\II$ in  condition  (C3)(1) in
Proposition~\ref{p.simple}.
For each pair $(k,n)$ of large natural numbers and small $t$,
define the map
\begin{equation}\label{IFS}
\Ga_{t}^{k,n} \colon \II_{t}^{k,n} \to \II, \quad \Ga_{t}^{k,n}(x)=
(\psi_\beta^k\circ \theta_{1,t}\circ \phi_\lambda^n \circ
\theta_2)(x),
\end{equation}
where $\II_{t}^{k,n}$ is the maximal subinterval of $\II$ where
the map $\Ga_{t}^{k,n}$  is defined. Note that there are choices of
$k,n,t$ such that the set $\II_t^{k,n}$ is empty.

The one-parameter family  $(\Ga_{t}^{k,n})_{t\in
[-\epsilon,\epsilon]}$
 is {\emph{the IFS associated to $(f_t)_{t\in [-\epsilon,\epsilon]}$}.

%

\subsubsection{Dictionary IFS -- Global dynamics}
\label{sss.dictionary} 
Using the invariance of the  spitting $E^\sst\oplus E^c\oplus E^\uut$ above
one gets the following 
 extension  of \cite[Proposition 3.8]{BDijmj}:

\begin{proposition}[Quotient dynamics -- Global dynamics]
Consider a semi-simple cycle $\sscyc (f,Q,P,\psi_\be, \phi_\la,\pm_{1},
\pm_2,\tpq,\tqp)$, its  model unfolding family $(f_t)_{t\in
[-\ve,\ve]}$, here $f=f_0$, and its associated IFS
$(\Ga_t^{n,m})_{t\in [-\epsilon,\epsilon]}$. Suppose that the
saddles $P$ and $Q$ have $\st$-indices $(s+1)$ and $s$,
respectively.

\medskip

\noindent {\bf (A) Periodic points:} Suppose that there is $r\in
\II_{t}^{k,n}$ such that
$$
\Ga_{t}^{k,n} (r)=r.
$$
Then there are $r^s\in \RR^s$ and $r^u\in \RR^u$ 
such that 
$$
R=(r^s,r,r^u)\in U_Q\cap \La_t(V)
$$
is a periodic point of
$f_t$
 of period 
$$
\pi(R)=k\, \pi(Q)+n\, \pi(P)+\tpq+\tqp.
$$
The
 eigenvalue of $Df^{\pi(R)}_t(R)$ 
corresponding to central direction $\{0^{s}\}\times \RR \times
\{0^u\}$ is
$$
\Big( \Ga_{t}^{k,n} \Big)^\prime (r)= \Big( \psi^k_\beta
\Big)^\prime \big( \theta_{1,t}(  \phi^n_\lambda (\theta_2
(r)))\big)\, \Big( \phi^n_\lambda
\Big)^\prime \big(\theta_2 (r)\big).
$$
In particular, if  $\big|\big(\Ga_{t}^{k,n}\big)^\prime (r)\big|
>1$ (resp. $<1$) the periodic point $R$ has $\st$-index $s$ (resp.
$\st$-index $s+1$).

Moreover, the periodic point $R$ also satisfies
\begin{equation}\label{e.initemA}
W^{\sst}(R, f_t)\pitchfork W^\ut(Q, f_t)\ne\emptyset \quad
\mbox{and} \quad W^{\uut}(R, f_t)\pitchfork W^\st(P,
f_t)\ne\emptyset.
\end{equation}

\medskip

In what follows, let $r$, $R$, and $(k,n)$ be as in item (A).

\medskip

\noindent {\bf (B)  Strong homoclinic intersections:} Suppose that
there is a pair $(\bar k,\bar n)\ne (k,n)$ such that
$$
 \Ga_{t}^{\bar k,\bar n} (r)=r.
$$
Then $W^{\sst}(R, f_t)\cap W^{\uut}(R, f_t)$
contains points that do not belong to the orbit of $R$.

\medskip

\noindent {\bf (C)  Heterodimensional cycles:} Suppose that there
are $d\in \II$ and $d^s\in\RR^s$ such that (in the coordinates in
$U_Q$)
$$
\Upsilon = \Upsilon (d^s,d)= \{(d^s,d)\} \times [-1,1]^u \subset W^\ut(P, f_t).
$$
If there is $i\in \NN$ such that
$$
\theta_{1,t} \circ \phi_\lambda^i \circ \theta_2 (d)=0
$$
then 
$$
W^{\ut}(P,f_t)\cap W^{\st}(Q,f_t)\ne \emptyset.
$$
Thus,
as
$W^{\st}(P,f_t)\cap W^{\ut}(Q,f_t)\ne \emptyset$, the
diffeomorphism $f_t$ has a heterodimensional cycle associated to
$P$ and $Q$.

In particular, if there are $i,h\in\NN$ such that
$$
\theta_{1,t} \circ  \phi_\lambda^i \circ \theta_2 \circ
\psi_\beta^h (t)= \theta_{1,t} \circ  \phi_\lambda^i \circ
\theta_2 \circ  \psi_\beta^h \circ \theta_{1,t}(0)=0
$$
then $f_t$ has a heterodimensional cycle associated to
$P$ and $Q$\footnote{In the previous expression one
implicitly assumes that $\psi_\be^h(t)\in [1-\de,1+\de]$, otherwise one
cannot apply $\theta_2$.}.

\medskip

\noindent {\bf (D)  Heteroclinic intersections (I):}  Suppose that
there are $i,\tilde k, \tilde n \in \NN$ such that
$$
\theta_{1,t} \circ  \phi_\lambda^i \circ \theta_2\circ
\Ga_{t}^{\tilde k,\tilde n} (r)=0.
$$
Then 
$$
W^{\uut}(R, f_t)\cap W^{\st}(Q,f_t)\ne \emptyset.
$$
If $(\tilde k, \tilde n)=(0,0)$ the previous identity just means $
\theta_{1,t} \circ  \phi_\lambda^i \circ \theta_2(r)=0. $

\medskip

\noindent {\bf (E)  Heteroclinic intersections (II):} Let
$(d^s,d)$ be as in item (C) (i.e., $\Upsilon(d^s,d)\subset W^\ut(P,f_t)$).
If there are $i,j\in \NN$ such that
$$
\Ga_t^{i,j}(d)=r
$$
then 
$$
W^{\ut}(P,f_t)\cap W^{\sst}(R,f_t)\ne 0.
$$
In particular, if 
\begin{enumerate}
\item \label{i.item1Edictionary}
either $r=d$ and $(i,j)=(0,0)$,
\item \label{i.item2Edictionary}
or there is $i$ such that $
 \psi_\beta^i \circ \theta_{1,t}(0)=  \psi_\beta^i(t)=r$
\end{enumerate}
then 
$$
W^{\ut}(P,f_t)\cap
W^{\sst}(R,f_t) \ne \emptyset.
$$

\medskip

\noindent {\bf (F)  Homoclinic points:} Suppose that there is $i$
such that
$$
\psi_\beta^i \circ \theta_{1,t}(0)= \psi_\beta^i(t)=\hat h\in [1-\de,1+\de].
$$
Then there is $\hat h^s\in (-1,1)^s$
such that $\hat H=(\hat h^s,\hat h,0^u)\in U_Q$ is a transverse
homoclinic point of $P$ for $f_t$ and
$$
\{(\hat h^s,\hat h)\}\times [-1,1]^u \subset W^\ut(P,f_t).
$$
 \label{p.dictionary}
\end{proposition}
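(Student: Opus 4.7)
The proposition is a dictionary translating IFS data into ambient dynamical data, extending \cite[Proposition~3.8]{BDijmj} by adding (B), the second part of (E), and the transversality conclusions of \eqref{e.initemA}. My plan rests on two structural facts from the semi-simple hypothesis: the four building blocks $f_t^{\pi(P)}$, $f_t^{\pi(Q)}$, $\mfT_{1,t}$, and $\mfT_2$ are affine and preserve the splitting $E^\sst\oplus E^c\oplus E^\uut$, and in this splitting $E^\sst$ is uniformly contracting, $E^\uut$ uniformly expanding, and the central component of the dynamics is exactly the prescribed IFS. Consequently the three directions decouple along any orbit inside $V$, and each assertion reduces to the corresponding one-dimensional condition on the central coordinate together with an essentially routine affine fixed-point or contraction argument on the transverse strong bundles.

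For (A), I solve affinely for $r^s\in\RR^s$ and $r^u\in\RR^u$ so that the loop closes in the strong directions, producing the periodic point $R=(r^s,r,r^u)$ of the stated period. The central eigenvalue is the chain-rule product $(\Ga_t^{k,n})'(r)$, which collapses to the stated formula because $\theta_{1,t}'$ and $\theta_2'$ lie in $\{\pm 1\}$, and the sign of $|(\Ga_t^{k,n})'(r)|-1$ decides the $\st$-index. For \eqref{e.initemA}, in the chart $U_Q$ one has $W^\sst_\loc(R)=\RR^s\times\{r\}\times\{r^u\}$ and $W^\ut(Q)\cap U_Q=\{0^s\}\times\RR\times\RR^u$, which meet transversely at a single point; the companion claim $W^\uut(R)\pitchfork W^\st(P)\ne\emptyset$ follows symmetrically after pushing $W^\uut_\loc(R)$ through $\mfT_2$ into $U_P$.

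Parts (C), (D), (E), and (F) are direct applications of the decoupling principle: trace the distinguished disk or point through the prescribed composition of the four affine maps and observe that its central coordinate evolves exactly along the stated IFS expression, forcing it onto the relevant target slice; for instance $x^c=0$ in $U_Q$ detects $W^\st(Q)$, $x^c=r$ in $U_Q$ detects $W^\sst_\loc(R)$, and a value in $[1-\delta,1+\delta]$ places the point on $\II\subset W^\st(P)$, with analogous targets in $U_P$. The transverse strong coordinates cooperate automatically, either by the ambient contraction/expansion (as in (D) and (F)) or through the vertical-disk hypothesis in (C) and (E), which supplies a full $u$-disk sweeping out every needed strong unstable coordinate.

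The main obstacle is (B). Here my plan is to produce a non-orbit point in $W^\sst(R)\cap W^\uut(R)$ by iterating the local strong stable manifold of $R$ backward along the $(\bar k,\bar n)$-loop (which, because $\Ga_t^{\bar k,\bar n}(r)=r$, preserves the central fiber $\{x^c=r\}$ in $U_Q$) and intersecting it with a forward iterate of the local strong unstable manifold of $R$ along the $(k,n)$-loop. The backward iterate expands the strong stable coordinate range, so $r^s$ is covered, while pulling the strong unstable coordinate toward that of the auxiliary periodic point $R'$ attached to $(\bar k,\bar n)$; the forward iterate of $W^\uut_\loc(R)$ fills the full ray $\{r^s\}\times\{r\}\times\RR^u$. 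Matching these yields a point in $W^\sst(R)\cap W^\uut(R)$, and it lies off the orbit of $R$ precisely because the detour through the $(\bar k,\bar n)$-loop is not a multiple of the period of $R$. This is the only step where the two-loop degeneracy is genuinely used and where careful bookkeeping of the strong bundles dominates the proof.
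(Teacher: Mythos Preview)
Your proposal is correct and follows the same decoupling strategy as the paper: exploit the affine product structure of the four building blocks so that the strong stable and strong unstable coordinates are handled by routine contraction/expansion, leaving only the one-dimensional IFS condition on the center. Your treatment of (A), (C)--(F), and \eqref{e.initemA} matches the paper's, up to cosmetic choices (for instance the paper verifies $W^{\uut}(R)\pitchfork W^\st(P)$ directly in $U_Q$, using that $[-1,1]^s\times[1-\delta,1+\delta]\times\{0^u\}\subset W^\st(P,f_t)$ there, rather than pushing through $\mfT_2$ into $U_P$ as you do; both are fine).

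The one place where your route differs in substance is (B). You propose a two-sided argument: pull $W^\sst_\loc(R)$ \emph{backward} along the $(\bar k,\bar n)$-loop and intersect with forward iterates of $W^\uut_\loc(R)$ along the $(k,n)$-loop. The paper's argument is shorter and one-sided: simply push a sub-disk $\{(r^s,r)\}\times\tilde\Delta^u\subset W^\uut_\loc(R)$ \emph{forward} once along the $(\bar k,\bar n)$-loop; because $\Ga_t^{\bar k,\bar n}(r)=r$ the image is $\{(\tilde r^s,r)\}\times[-1,1]^u$, which meets $W^\sst_\loc(R)=[-1,1]^s\times\{(r,r^u)\}$ at $(\tilde r^s,r,r^u)$. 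Your version works, but the extra backward step and the appeal to the auxiliary periodic point $R'$ are unnecessary; the paper never needs $R'$. In both versions the ``not on the orbit of $R$'' clause is most cleanly justified by itineraries: the intersection point carries a $(\bar k,\bar n)$-block in its symbolic code, whereas the orbit of $R$ is the periodic word $(k,n)^\infty$. (A minor side remark: you describe (B) as one of the additions beyond \cite[Proposition~3.8]{BDijmj}, but the paper explicitly says (A) and (B) are already stated there; the genuinely new parts are the transversality conclusions in \eqref{e.initemA} and items (C)--(F).)
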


\begin{proof}
For notational  simplicity, let us assume that
$P$ and $Q$ are fixed points.

Items (A) and (B) are stated in \cite[Proposition 3.8]{BDijmj}. To
prove item (A) it is enough to observe that the definition of the
pair $(k,n)$ and the product structure provide a pair of cubes
$\De^u\subset [-1,1]^u$ and $\De^s\subset [-1,1]^s$ such that
$$
f^\ell_t\big( [-1,1]^s\times \{r \} \times \De^u \big)=
\De^s\times \{r\} \times [-1,1]^u, \quad \ell = k+n+\tpq+\tqp,
$$
if $k$ and $n$ are large enough (note that $k,n\to \infty$ as $t\to 0$).
Note that $D f_t^\ell$  uniformly contracts vectors
parallel to $\RR^s\times \{(0,0^u)\}$ and uniformly expands
vectors parallel to $\{(0^s,0)\}\times \RR^u$. This gives the
periodic point $R=(r^s,r,r^u)$ of period $\ell$. Note
that our arguments also imply  that
\begin{equation}\label{e.uussR}
W^{\uut}(R,f_t)\supset \{(r^s,r)\} \times [-1,1]^u, \quad
W^{\sst}(R,f_t)\supset [-1,1]^s \times \{(r,r^u)\}.
\end{equation}
Note also that from (C3)(1) in Proposition~\ref{p.simple}, 
 in the
coordinates in $U_Q$, one has that
\begin{equation}
\label{e.rssuu}
\begin{split}
&\{0^s\}\times  [1-\delta,1+\delta] \times [-1,1]^u \subset
W^\ut(Q, f_t), \quad \mbox{and} \\
& [-1,1]^s\times [1-\delta,1+\delta] \times \{0^u\} \subset
W^\st(P,f_t).
\end{split}
\end{equation}
The intersection properties between the invariant manifolds of
$R,P,$ and $Q$ in item (A) follow immediately from equations
\eqref{e.uussR} and \eqref{e.rssuu} and $r\in [1-\de,1+\de]$.

\smallskip

To prove item (B) one argues exactly as in item (A). Note that
the choice of $(\bar k,\bar n)$ (large $\bar k, \bar n$) provides  a cube
$\tilde\De^u\subset [-1,1]^u$ and a point $\tilde r^s\in [-1,1]^s$
such that
$$
f^m_t\big( \{(r^s, r) \} \times \tilde \De^u \big)= 
\{(\tilde r^s,r)\} \times [-1,1]^u, \quad m = \bar k +\bar
n+\tpq+\tqp.
$$
Since $\{(r^s, r )\} \times \tilde \De^u\subset W^\uut (R,f_t)$
and $(\tilde r^s,r,r^u)\in W^\sst(R,f_t)$ there is a strong
homoclinic intersection associated to $R$.

\smallskip

 To prove the first part
of item (C) note that if $t$ is small then $i$ is large and
thus
\[
\begin{split}
f_t^{\tpq+i+\tqp}(\Upsilon)\cap U_Q&=
f_t^{\tpq+i+\tqp}\big(\{(d^s,d)\} \times
[-1,1]^u \big) \cap U_Q \\
&\supset \{(\bar d^s, \theta_{1,t}\circ  \phi_\la^i \circ \theta_2
(d))\} \times [-1,1]^u\\&= \{(\bar d^s,0)\} \times [-1,1]^u,
\end{split}
\]
for some $\bar d^s\in (-1,1)^s$. Since $[-1,1]^s\times
\{(0,0^u)\}\subset W^\st (Q,f_t)$ and $\Upsilon\subset
W^\ut(P,f_t)$ we get $W^\ut (P,f_t) \cap W^\st (Q,f_t)\ne
\emptyset$.

To prove the second part of item (C) consider $a^s\in \RR^s$ and
the linear map $B^s$ as in (C2)(3) and  (C1) in
Proposition~\ref{p.simple}, respectively. Note that
$$ \big( (B^s)^h(a^s), \psi^h_\beta (t), 0^u \big)=
(\tilde d^s, \tilde d,0^u ),
\quad \tilde d=\psi^h_\beta (t)=
\psi^h_\beta \circ \theta_{1,t}(0)
\in [1-\de,1+\de]
$$
is a transverse homoclinic point of $P$ such that
$$
\tilde \Upsilon= \{ (\tilde d^s, \tilde d)\}\times [-1,1]^u
\subset W^\ut(P,f_t) \cap U_Q.
$$
The intersection between $W^\ut(P,f_t)$ and $W^\st(Q,f_t)$
now follows applying the first part of item (C) to the
disk $\tilde \Upsilon$: just note that by hypothesis
and the definition of $\tilde d=\psi^h_\beta \circ \theta_{1,t}(0)$ one has 
 $\theta_{1,t}
\circ \phi_\lambda^i \circ \theta_2 (\tilde d)=0$.

\smallskip

Item (D) follows similarly. Let (in the coordinates in $U_Q$)
$$
 \De =\{(r^s,r)\}\times [-1,1]^u \subset W^{\uut}(R,f_t).
$$
In the coordinates in $U_Q$, we have
\[
\begin{split}
f_t^{\tpq+i+\tqp+\tilde n + \tpq + \tilde k +\tqp} ( \De) &
\supset \{(\tilde r^s, \theta_{1,t} \circ  \phi_\lambda^i \circ
\theta_2 \circ  \Ga_t^{\tilde n,\tilde
k} (r))\}\times [-1,1]^u \\
& = \{(\tilde r^s,0)\}\times [-1,1]^u,
\end{split}
\]
for some $\tilde r^s$. As $[-1,1]^s\times \{(0,0^u)\} \subset
W^\st (Q,f_t)$ we get  $W^\uut(R,f_t)\cap W^\st (Q, f_t)\ne\emptyset$.

\smallskip

The remainder assertions (E) and (F) in the proposition follow
analogously, so we omit their proofs.
\end{proof}




\section{Simple non-twisted cycles}
\label{s.notonesided}


In this section we first consider non-twisted cycles 
and explain how these cycles yield partially hyperbolic
saddle-node/flip points with strong homoclinic intersections as
well as further intersection properties, see Proposition~\ref{p.l.R}.
Using Proposition~\ref{p.dictionary} we will write these 
properties in terms of the  IFSs associated to the cycle.
We also see how  these intersections
are realized by perturbations (model families) of the initial cycle.
These intersection properties are the main ingredient for
the stabilization of cycles. 
Finally, in Section~\ref{ss.cyclesbiaccumulatedbis}
we consider cycles involving a saddle with a non-trivial homoclinic class
and introduce the bi-accumulation property.

\subsection{Non-twisted simple cycles with adapted
homoclinic intersections}

The first step is to see that non-twisted simple cycles yield
simple cycles with adapted homoclinic intersections. 

\begin{lemma}
\label{l.adapted} Consider a non-twisted cycle $\scyc
(f,Q,P,\be,\la,\pm_1,\pm_2)$. There is $g$ arbitrarily
$C^1$-close to $f$ having a non-twisted simple cycle (associated
to $Q$ and $P$) with a sequence of adapted homoclinic intersections (associated
to $P$).
\end{lemma}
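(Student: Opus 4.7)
The plan is to normalize the cycle so that $\theta_{1,0}=\mathrm{id}$, perform a $C^1$-small perturbation of $f$ in the global part of $M$ (outside all the prescribed chart neighborhoods of Proposition~\ref{p.simple}) so that a deep iterate of $W^u(P)$ re-enters $U_Q$ as a horizontal disk at central coordinate $1$, and finally extract the accumulating sequence from the $\lambda$-lemma applied to the resulting transverse homoclinic orbit.

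First, by Lemma~\ref{l.reduction} I may replace $f$ by an arbitrarily $C^1$-close non-twisted simple cycle with $\sign(\mfT_1)=+$, so that $\theta_{1,t}(x^c)=x^c+t$. In the chart $U_Q$, Proposition~\ref{p.simple}(C3) gives $\RR^s\times[1-\de,1+\de]\times\{0^u\}\subset W^s(P,f)$, while the horizontal disks
\[
D_N=f^{N\pi(Q)}\bigl(\mfT_1(W^u_\loc(P)\cap U_{Y_P})\bigr)=\bigl\{\bigl((B^s)^N(a^s),0\bigr)\bigr\}\times (B^u)^N T_1^u\bigl([-\ve,\ve]^u\bigr)
\]
sit in $W^u(P,f)\cap U_Q$ at central coordinate $0$. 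For $N$ large, the $u$-extent of $D_N$ exceeds the bounds of $U_Q$, so the forward orbit of $\mfT_1(W^u_\loc(P)\cap U_{Y_P})$ under $f^{\pi(Q)}$ visits the unconstrained global region of $M$ at some step.

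Pick a point $Z$ on this orbit in the global region, disjoint from $U_P\cup U_Q$ and from the prescribed tubes $\bigcup_{i=0}^{\tpq}f^i(U_{Y_P})$ and $\bigcup_{i=0}^{\tqp}f^i(U_{X_Q})$. Perform a $C^1$-small compactly supported perturbation of $f$ in a tiny neighborhood of $Z$ that shifts a piece of $W^u(P)$ by $\beta^{-k}$ along $E^c$ (together with any arbitrarily small $E^{\sst}$-adjustment needed), both small for $k$ large. After $k$ further iterations of $f^{\pi(Q)}=(B^s,\psi_\beta,B^u)$ back in the $U_Q$-chart, the central coordinate of the shifted piece becomes $\beta^k\cdot\beta^{-k}=1$, the $u$-extent is expanded by $(B^u)^k$ to cover $[-1,1]^u$, and the $s$-coordinate lands at some $\bar a^s\in(-1,1)^s$. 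Thus $W^u(P,g)$ of the perturbed diffeomorphism $g$ contains exactly the horizontal disk $\De_0=\{(\bar a^s,1)\}\times[-1,1]^u$, intersecting $W^s(P,g)$ transversely at the homoclinic point $(\bar a^s,1,0^u)$. Because the perturbation is supported outside the prescribed chart neighborhoods, the chart forms of Proposition~\ref{p.simple} and the three signs $\sign(Q)$, $\sign(P)$, $\sign(\mfT_1)$ are unchanged, so $g$ is still a non-twisted simple cycle for $P,Q$, now satisfying (C1)--(C4).

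For (C5), the transverse homoclinic orbit through $\De_0$ produces, via the $\lambda$-lemma, infinitely many pieces of $W^u(P,g)$ $C^1$-accumulating on $\De_0$ in $U_Q$; the horseshoe at this homoclinic orbit supplies infinitely many periodic orbits homoclinically related to $P$, whose associated $W^u$-pieces, iterated a few times by $f^{\pi(Q)}$, become full horizontal $u$-disks accumulating on $\De_0$ with pairwise disjoint orbits. Invariance of the partially hyperbolic splitting $E^{\sst}\oplus E^c\oplus E^{\uut}$ on $\La(V)$ (Remark~\ref{r.preserve}) forces each of these disks in $U_Q$ to be tangent to $E^{\uut}$, hence automatically of the required form $\{(\bar a_i^s,x_i)\}\times[-1,1]^u$ with $(\bar a_i^s,x_i)\to(\bar a^s,1)$, and transversality ensures persistence for all $t$ near $0$ in a model unfolding family of $g$. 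The principal obstacle is the first step: verifying that the required central shift can be realized by a $C^1$-small perturbation outside every prescribed chart neighborhood of Proposition~\ref{p.simple}; this is possible precisely because, for $N$ large, the forward orbit of $\mfT_1(W^u_\loc(P)\cap U_{Y_P})$ exits the cycle neighborhood $V$ into the unconstrained part of $M$ where the affine form of $f$ is not prescribed.
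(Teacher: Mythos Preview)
Your argument has a genuine gap at the perturbation step. You choose $Z$ on the forward orbit of the $u$-disk $\mfT_1(W^u_\loc(P)\cap U_{Y_P})$ \emph{after} it has left $U_Q$ (this is how you guarantee $Z$ lies outside the prescribed chart neighborhoods). But once the orbit exits $U_Q$ in the $u$-direction, two things break down: (i) the bundle $E^c$ is only defined on $\Lambda(V)$ and in the charts, so ``shifting by $\beta^{-k}$ along $E^c$'' has no meaning at $Z$; and (ii) there is no mechanism in the simple-cycle data for that orbit to \emph{return} to $U_Q$. Your sentence ``After $k$ further iterations of $f^{\pi(Q)}=(B^s,\psi_\beta,B^u)$ back in the $U_Q$-chart'' therefore assumes exactly what needs to be produced. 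The orbit you are iterating has already escaped the region where the affine form $(B^s,\psi_\beta,B^u)$ is valid.

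Your step for (C5) has a related issue. The $\lambda$-lemma gives disks $C^1$-close to $\De_0$, but Definition~\ref{d.adaptedsimple} requires them to be \emph{exactly} of the form $\{(\bar a_i^s,x_i)\}\times[-1,1]^u$. You invoke invariance of $E^{\sst}\oplus E^c\oplus E^{\uut}$ on $\Lambda(V)$, but your homoclinic orbit through $\De_0$ was built via a perturbation \emph{outside} $V$, so it does not lie in $\Lambda(V)$ and need not respect the splitting. The approximating pieces of $W^u(P,g)$ will in general carry curvature and nontrivial $(s,c)$-variation.

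The paper's proof avoids both problems by staying entirely inside the model unfolding family and the IFS dictionary (Proposition~\ref{p.dictionary}). After arranging $\lambda^k=\beta^{-m}$ by a multiplier perturbation, one selects the parameter $t_k=\lambda^k$. At this parameter, condition (F) of the dictionary ($\psi_\beta^m(t_k)=1$) yields the adapted homoclinic disk $\{(h^s,1)\}\times[-1,1]^u$, while condition (C) ($\theta_{1,t_k}\circ\phi_\lambda^k\circ\theta_2\circ\psi_\beta^m\circ\theta_{1,t_k}(0)=0$) simultaneously furnishes a \emph{new} quasi-transverse heteroclinic point $\tilde Y_P$, so the cycle persists. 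Because everything happens in the charted neighborhood $V$ via compositions of the affine building blocks, the resulting disks are automatically of the exact product form. The sequence of adapted intersections is then obtained by repeating the construction from a second adapted point $V$ and iterating through the IFS, again via item (F). The point is that the unfolding parameter $t$ does the work that your external perturbation was meant to do, and it does so without leaving the affine structure.
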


\begin{proof}
Note that by Lemma~\ref{l.reduction} we 
can assume that $\theta_{1,t}(x)=x+t$.
The proof has two steps. We first perturb the cycle to get a cycle
with one adapted homoclinic intersection. In the second step we 
perturb this new cycle with an adapted homoclinic intersection to get a cycle
with a sequence of adapted homoclinic intersections.

\smallskip

\noindent{\emph{A cycle with one adapted homoclinic intersection.}}
Observe that, after an arbitrarily small perturbation, we can
assume that the central multipliers of the cycle
satisfy
$\lambda^k=\beta^{-m}>0$ for some arbitrarily large $k$ and $m$. We fix
small $t_k>0$ such that
\begin{equation}\label{e.tk}
t_k=\lambda^k=\beta^{-m}.
\end{equation}
This choice gives
$$
\psi_\beta^m \big( \theta_{1,t_k}(0) \big)=\psi_\beta^m (t_k)=1.
$$
Therefore, by (F) in Proposition~\ref{p.dictionary},  
the point $H=(h^s,1,0)\in U_Q$
 is a
transverse homoclinic point of $P$  such that
$$
\{(h^s,1)\} \times [-1,1]^u \subset W^u(P,f_{t_k}).
$$
The point $H$ will provide the adapted homoclinic point in
Definition~\ref{d.adaptedsimple}.

To see that $f_{t_k}$ has a cycle associated to $P$ and $Q$ just
note that
\begin{equation}\label{e.condition}
\theta_{1,t_k}\circ \phi_\lambda^k \circ \theta_2 \circ
\psi_\beta^m \circ \theta_{1,t_k}(0)= \theta_{1,t_k}\circ
\phi_\lambda^k \circ \theta_2(1)=-\lambda^k+t_k=0.
\end{equation}
 Item (C) in
Proposition~\ref{p.dictionary} implies that $W^\ut(P,f_{t_k})\cap 
W^\st(Q,f_{t_k}) \ne \emptyset$.  

Let $\tilde Y_P$ be the
heteroclinic point in $W^\ut(P,f_{t_k}) \cap W^\st(Q,f_{t_k})$
corresponding to the condition in (\ref{e.condition}).
This implies  that $f_{t_k}$ has a cycle associated to $P$ and
$Q$ and that the points $X_Q\in W^\st(P,f_{t_k}) \cap
W^\ut(Q,f_{t_k})$ (in condition (C3)(1)) and $\tilde Y_P\in
W^\ut(P,f_{t_k}) \cap W^\st(Q,f_{t_k})$ are heteroclinic points
associated to this cycle. Using the transverse homoclinic
 point $H$ of $P$ 
 and arguing as in Lemma~\ref{l.reduction}, 
 we will get
a cycle with an adapted homoclinic intersection.

\smallskip

Indeed, repeating the previous argument we can assume that
the cycle has two ``adapted homoclinic points". The 
additional one is of the form $V=(v^s, 1+v, v^u)$, where
$1+v\in [1-\de, 1+\de]$  (in principle $v\ne 0$) 
and $\De_V=\{(v^s,1+v)\}\times [-1,1]^u\subset W^u(P,f_t)$.
We also have that the disks $\De_V$
and $\De_H=\{(h^s,1)\}\times [-1,1]^u\subset W^u(P,f_t)$
have disjoint orbits.
We use the disk $\De_V$ to get the sequence of adapted homoclinic intersections.

\smallskip

\noindent{\emph{A cycle with a sequence of adapted homoclinic intersections.}}
To get a cycle with a sequence of adapted homoclinic intersections
we argue as above, but now starting with a cycle with ``two adapted homoclinic
intersections", say $H$ and $V$ as above. 
Let us assume that $\theta_2(1+x)=(-1+x)$.
The case $\theta_2(1+x)=(-1-x)$ is analogous.
As above we can assume that equation \eqref{e.tk} holds for infinitely many
$m$ and $k$.

To get a sequence of homoclinic points $H_i$ accumulating to $H$
write 
$$
\de_i=\be^m\,\la^{2\,i}\,(1-v)>0
$$
and consider the sequence
\[
\begin{split}
\psi_\be^m\circ \theta_{1,t_k} \circ \phi_\la^{2\,i}\circ
\theta_2(1+v)& = \psi_\be^m \big( t_k-\la^{2\,i}\, (1- v) \big)\\
&=1-\be^m\,\la^{2\,i}\,(1- v)=1-\delta_i.
\end{split}
\]
Item (F) in Proposition~\ref{p.dictionary} implies that for each
$i$ there is $h^s_i$ such that
\begin{equation}\label{e.Hi}
H_i=(h^s_i,1-\de_i,0)\in U_Q, \quad \de_i>0,
\end{equation}
is a 
transverse homoclinic point of $P$ and 
$$
\De_i=\{(h^s_i,1-\de_i)\} \times
[-1,1]^u\subset W^u(P,f_{t_k}).
$$
This sequence accumulates to $\De_H$ and the disks
$\De_i$ and $\De_H$ have disjoint orbits by construction.

Finally,
arguing exactly as above we have that $f_{t_k}$ has a heterodimensional
cycle associated to $P$ and $Q$.

Write $\tilde f=f_{t_k}$. We perturb $\tilde f$
to get a simple cycle with a sequence of adapted homoclinic intersections.
 Note that $\tilde f$ preserves the
partially hyperbolic splitting $E^{ss}\oplus E^c\oplus E^{uu}$ in
the neighborhood $V$ of the initial simple cycle (recall
(\ref{e.v})). For this new cycle we have ``transition maps''  say
$\tilde \mfT_{1,t_k}$ and $\tilde \mfT_2$ (in principle, these
maps do not satisfy all the properties of  ``true"
transitions). These new ``transitions" $\tilde \mfT_{1,t_k}$  and
$\tilde \mfT_2$ are obtained considering compositions of the maps
$\mfT_{1,t_k}$, $\mfT_2$, $Df^{\pi(P)}(P)$, and $Df^{\pi(Q)}(Q)$
defined for the initial cycle and replacing the heteroclinic
points $X_Q$ and $\tilde Y_P$ by some backward iterates of them.
 Note that the central maps $\tilde \theta_{1,0}$ and $\tilde \theta_2$
 associated to the ``new transitions"
 may fail to
be isometries. 

Now, exactly as in the proof of
Lemma~\ref{l.reduction},  we consider an arbitrarily small
perturbation of $\tilde f$ obtained taking multiplications (in the
central direction) by numbers close to one  throughout long
segments of the orbits of $X_Q$ and $\tilde Y_P$.
This is possible since
$t_k$ can be taken arbitrarily small and $k$ and $m$ arbitrarily big.
 The resulting
diffeomorphism has a simple cycle with a sequence of adapted
homoclinic intersections associated to $P$ (obtained considering
appropriate iterations of the points $H_i$ and $H$). This
completes  the proof of the lemma.
\end{proof}

\begin{remark}\label{r.adaptedintersections}
Using equation \eqref{e.Hi},
we can assume that in the coordinates in  $U_Q$, the adapted
transverse homoclinic points of $P$ are such that
$$
\begin{array}{ll}
H&=(h^s,1,0^u) \quad \mbox{and} \quad \{(h^s,1)\}\times
[-1,1]^u\subset W^u(P,f),\\
H_i&=(h^s_i,\zeta_i,0^u) \quad \mbox{and} \quad
\{(h^s_i,\zeta_i)\}\times [-1,1]^u\subset W^u(P,f),
\end{array}
$$
where $(\zeta_i)$ is an increasing sequence converging to $1$.
\end{remark}

\subsection{Dynamics generated by non-twisted cycles}
\label{ss.perturbationsofnontwisted}
Consider a diffeomorphism $f$ with a simple cycle and its
associated neighborhood $V$ in \eqref{e.v}. For $g$ close to $f$
let $\La_g (V)= \cap_{i\in \ZZ} g^i(V)$ be the maximal invariant set
of $g$ in $V$.
Note that the set $\La_g(V)$ has a partially hyperbolic splitting
of the form 
$E^{\sst}_g\oplus E^\ct_g \oplus E^\uut_g$, where $E^\ct_g$ is one-dimensional
and $E^\sst_g$ and $E^\uut$ uniformly contracting and expanding, respectively.

\begin{proposition}
\label{p.l.R} Consider a non-twisted cycle $\scyc
(f,Q,P,\be,\la,+,\pm_2)$ with a sequence of adapted 
homoclinic intersections (associated to $P$). 
Then there
is a diffeomorphism $g$ arbitrarily $C^1$-close to $f$ 
with a partially hyperbolic
saddle-node/flip $S_g\in \La_g(V)$ of arbitrarily large period 
satisfying the following
properties:
\begin{enumerate}
\item \label{i.R1} $W^{\sst}(S_g,g) \pitchfork W^{\ut}(Q,g)\ne\emptyset$, 
\item  \label{i.R2}  $W^{\uut}(S_g,g) \pitchfork W^{\st}(P,g)\ne\emptyset$,
\item \label{i.R4}
$W^\ut(P,g) \cap W^{\sst}(S_g,g)\ne\emptyset$,
\item \label{i.R5}  $W^{\ut}(P,g) \cap
W^{\st}(Q,g)\ne\emptyset$ and this intersection is quasi-transverse, and
\item\label{i.R6} the homoclinic class
of $P$ for $g$ is non-trivial.
\end{enumerate}
\end{proposition}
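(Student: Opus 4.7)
The plan is to derive all five conclusions from the dictionary in Proposition~\ref{p.dictionary} applied to the IFS $(\Ga_t^{k,n})$ associated to a model unfolding of the given cycle, after the normalization $\theta_{1,t}(x)=x+t$ provided by Lemma~\ref{l.reduction}. In that case
\[
\Ga_t^{k,n}(x) = \beta^k\lambda^n\,\theta_2(x) + \beta^k t,
\]
with derivative $\pm\beta^k\lambda^n$. Reading the dictionary: items (\ref{i.R1}) and (\ref{i.R2}) are automatic from part (A) for any fixed point $r\in\II$ of any $\Ga_t^{k,n}$; item (\ref{i.R4}) is given by part (E) applied to one of the adapted disks $\De_i$; item (\ref{i.R5}) is given by part (C); item (\ref{i.R6}) holds because the adapted disk $\De_H\subset W^\ut(P,f_t)$ persists and provides transverse homoclinic points of $P$. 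The two conditions that carry real content are therefore the saddle-node/flip equation $(\Ga_t^{k,n})'(r)=\pm 1$ at a fixed point $r\in\II$, and the strong-homoclinic condition from part (B), namely a second pair $(\bar k,\bar n)\ne(k,n)$ with $\Ga_t^{\bar k,\bar n}(r)=r$.

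First I would construct the saddle-node/flip. Since $\log|\beta|>0$ and $-\log|\lambda|>0$, after an arbitrarily small perturbation of the central multipliers (staying in the semi-simple class of Definition~\ref{d.nonaffinesimple}) I can find arbitrarily large pairs $(k,n)$ with $|\beta^k\lambda^n|$ exactly equal to $1$; the sign of $\beta^k\lambda^n$ and of $\theta_2$ determines saddle-node versus flip. Solving the affine equation $\Ga_t^{k,n}(r)=r$ for $t$ yields $t$ of order $\beta^{-k}$, small, with a fixed point $r\in\II$. By part (A) this produces the required partially hyperbolic saddle-node/flip $S_g\in\La_g(V)$ of period $k\,\pi(Q)+n\,\pi(P)+\tpq+\tqp$, arbitrarily large, and conclusions (\ref{i.R1})--(\ref{i.R2}) come for free from the same part (A).

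Next I would arrange the remaining conditions simultaneously. Property (\ref{i.R5}) reduces via part (C) to a single scalar equation $\theta_{1,t}\circ\phi_\lambda^{i_1}\circ\theta_2\circ\psi_\beta^{h_1}(t)=0$ for some large $(i_1,h_1)$, which amounts to $\pm\lambda^{i_1}(\beta^{h_1}t-1)=-t$; this is solvable for $t$ up to an arbitrarily small adjustment. Property (\ref{i.R4}) reduces via part (E) to $\Ga_t^{i_2,j_2}(\zeta_i)=r$ for some $i$ and some $(i_2,j_2)$, which is feasible because the heights $\zeta_i$ of the adapted disks accumulate at $1\in\II$ and the images of the IFS maps sweep out $\II$ for large exponents. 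The strong homoclinic intersection (condition (B)) asks for a second large pair $(\bar k,\bar n)\ne(k,n)$ with $\Ga_t^{\bar k,\bar n}(r)=r$, once more a scalar equation on large integer exponents. Property (\ref{i.R6}) is automatic from the persistence of the disk $\De_H$.

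The main obstacle is to show that all of these conditions can be met \emph{by the same} diffeomorphism $g$: one must choose the parameter $t$ and the finite list of integer exponents $(k,n,\bar k,\bar n,i_1,h_1,i_2,j_2,i)$ so that every scalar equation above holds at once. The crucial arithmetic input is that, after an arbitrarily small perturbation, the additive group generated by $\log|\beta|$ and $\log|\lambda|$ is dense in $\RR$, which supplies enough integer solutions; the role of the \emph{non-twisted} hypothesis, via Lemma~\ref{l.reduction}, is to normalize $\theta_{1,t}$ to orientation-preserving form so that these equations can be simultaneously realized, and the role of the \emph{sequence} of adapted disks $\De_i\to\De_H$ is to position the central fixed point $r$ compatibly with a preimage of an adapted disk. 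Once the required parameters are chosen and the resulting $g$ obtained as $f_t$ (up to arbitrarily small perturbations of the multipliers and of the transition maps within the semi-simple class), $g$ has the partially hyperbolic saddle-node/flip $S_g$ with a strong homoclinic intersection and the four intersection properties (\ref{i.R1})--(\ref{i.R5}) together with the non-trivial homoclinic class (\ref{i.R6}).
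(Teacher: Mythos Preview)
Your proposal has two genuine gaps and one misreading, and it diverges from the paper's argument at the points that matter.

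\textbf{Misreading of the statement.} You list the strong homoclinic intersection (dictionary item (B), a second pair $(\bar k,\bar n)$ with $\Ga_t^{\bar k,\bar n}(r)=r$) as one of the ``two conditions that carry real content''. But Proposition~\ref{p.l.R} does \emph{not} ask for a strong homoclinic intersection of $S_g$; that is arranged later, in Proposition~\ref{p.nonreversingm}, by a separate perturbation. By adding this extra constraint you make the simultaneous-satisfaction problem strictly harder than it needs to be.

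\textbf{Creation of the saddle-node.} You propose to perturb the multipliers so that $|\beta^k\lambda^n|=1$ exactly and then solve the affine fixed-point equation. In the simple (affine) setting this is degenerate when the sign is $+1$: the map $\Ga_t^{k,n}$ is then a translation, and for the unique $t$ giving a fixed point it is the identity on the whole line, so there is no isolated saddle-node. The paper avoids this entirely: it chooses $(m,k)$ with $\beta^{-m}=\lambda^k(1-\lambda)$ (resp.\ $\beta^{-2m}=\lambda^{2k}(1-\lambda^2)$), sets $t_k=\lambda^k$ (resp.\ $\lambda^{2k}$), and obtains $\Ga_{t_k}^{m,k+1}(1)=1$ with derivative $\pm\lambda/(1-\lambda)$, a value \emph{uniformly bounded and bounded away from zero} but not equal to $\pm 1$. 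Only at the very end, once all intersection properties are in place, is the central eigenvalue pushed to $\pm 1$ by a perturbation spread along the (arbitrarily long) orbit of $S_g$, as in Lemma~\ref{l.reduction}.

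\textbf{Simultaneous satisfaction.} You correctly identify this as ``the main obstacle'' but do not resolve it; invoking density of $\ZZ\log|\beta|+\ZZ\log|\lambda|$ is not enough to make three or four independent scalar equations hold for a \emph{single} parameter $t$. The paper's mechanism (Lemma~\ref{l.ifs}) is quite specific: it \emph{fixes} the central coordinate of the periodic point at $r=1$, then uses \emph{two consecutive} adapted homoclinic heights $\zeta_j$ and $\zeta_{j\pm 1}$ for two different purposes. A secondary parameter $\mu$ (a further perturbation of $\beta$, keeping the fixed point at $1$) is tuned so that $\theta_{1,t_k+\mu_j}\circ\phi_\lambda^k\circ\theta_2(\zeta_j)=0$, which via dictionary item (C) gives (\ref{i.R5}). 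Then a small \emph{nonlinear} perturbation $\tilde\psi_{\beta(\mu_j)}$ of $\psi_\beta$, supported away from the region already used, forces $\tilde\Ga_{t_{k,j}}^{\,n_j+m,k}(\zeta_{j\pm 1})=1$, which via dictionary item (E) gives (\ref{i.R4}). The point is that the sequence of adapted intersections provides enough independent ``targets'' close to $1$ to decouple the constraints; your outline does not exploit this and leaves the core difficulty open.
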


\begin{remark}
Indeed, the proof of this proposition
will  imply that the strong unstable manifold of $S_g$ 
transversely intersects the disk $[-1,1]^s\times \II\times \{0^u\}$ 
contained in $W^\st(P,g)$ in
(C3)-\eqref{i.intervalI}  in Proposition~\ref{p.simple}.  Now item
\eqref{i.R4} in Proposition~\ref{p.l.R} implies that
$W^\ut(P,g)$ accumulates to $W^\uut(S_g,g)$ (may be after a perturbation). Thus 
 after a 
perturbation we can assume that 
$W^\ut(P,g)\pitchfork \big([-1,1]^s\times \II\times \{0^u\} \big)\ne \emptyset$.
\label{r.preadapted}
\end{remark}

\subsubsection{Proof of Proposition~\ref{p.l.R}}
The main step in the proof of the proposition 
 is the next lemma about the
IFS associated to a simple cycle.

\begin{lemma}\label{l.ifs}
Consider a non-twisted cycle $\scyc
(f,Q,P,\be,\la,+, \pm_2)$ with an increasing
 sequence of
adapted homoclinic intersections  $(h_i^s,\zeta_i,0^u)$ as in
Remark~\ref{r.adaptedintersections}.

Then there are  sequences of parameters $(t_i)_i$, $t_i\to 0$, and
of perturbations $ \psi_{\be,i}$ of $\psi_\be(x)=\beta\,x$,
$\psi_{\be,i} \to \psi_{\be}$, 
such that the IFS 
$\tilde \Ga_{t_i}^{n,k}$ associated to 
$\phi_\lambda$, $\psi_{\beta,i}$, $\theta_{1,t_i}$, and $\theta_2$ in equation \eqref{IFS}
 satisfies the following properties:
\begin{enumerate}
\item \label{i.ifs1}
There is a sequence of pairs $(v_i,w_i)$,
$v_i,w_i\to \infty$, such that
$$
\begin{array}{c}
\tilde \Gamma_{t_i}^{v_i,w_i}(1)=1,\\
 \dfrac{\la^2}{2\, (1-|\la|^2)}< |(\tilde
\Gamma_{t_i}^{v_i,w_i})^\prime (1))|< \dfrac{2\, |\la |}{1-|\la|}.
\end{array}
$$
\item \label{i.ifs3}
There are large $j$ and $\ell\in \NN$ such that 
$$
\theta_{1,t_i}\circ \phi_\lambda^{\ell} \circ \theta_2
(\zeta_j)=0.
$$ 
\item \label{i.ifs4}
There are $j_0\in \{j-1,j+1\}$
($j$ as in item (\ref{i.ifs3})) and $\bar
n,\bar \ell\in \NN$ such that
$$
\Gamma_{t_i}^{\bar n,\bar \ell} (\zeta_{j_0})=1.
$$
\end{enumerate}
\end{lemma}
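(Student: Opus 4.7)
The plan is to compute the unperturbed IFS in closed form, translate each of the three items into a concrete equation, and solve them jointly by first fixing $t_i$ (which handles (\ref{i.ifs3}) automatically) and then performing two small $C^1$ perturbations of the linear map $\psi_\beta$, supported along two disjoint finite orbits. Using $\theta_{1,t}(x)=x+t$ (which holds after Lemma~\ref{l.reduction}, since $\sign(\mfT_1)=+$), $\phi_\lambda(x)=\lambda x$, $\psi_\beta(x)=\beta x$, and $\theta_2(x)=\pm(x-1)$, the unperturbed IFS reads $\Gamma_t^{k,n}(x)=\beta^k\lambda^n\theta_2(x)+\beta^k t$. From Remark~\ref{r.adaptedintersections} and the construction in Lemma~\ref{l.adapted}, $\zeta_j=1-\delta_j$ with $\delta_j=C|\lambda|^{2j}$; in particular $\theta_2(1)=0$ and $\delta_{j+1}=\lambda^2\delta_j$. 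Fixing any diagonal sequence $\ell_i,j_i\to\infty$ and setting $t_i:=-\lambda^{\ell_i}\theta_2(\zeta_{j_i})$ immediately gives (\ref{i.ifs3}) with $(j,\ell)=(j_i,\ell_i)$, and $t_i\to 0$.

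For (\ref{i.ifs1}) observe that $\theta_2(1)=0$ yields
$$\tilde\Gamma^{v,w}_{t_i}(1)=\psi^v_{\beta,i}(t_i),\qquad \bigl|(\tilde\Gamma^{v,w}_{t_i})'(1)\bigr|=|\lambda|^{w}\,\bigl|(\psi^v_{\beta,i})'(t_i)\bigr|.$$
Let $v_i$ be the unique integer with $|\beta|^{v_i}|t_i|\in[1,|\beta|)$, so $\beta^{v_i}t_i$ lies in a bounded window around $1$. A $C^1$-small bump modification of $\psi_\beta$ supported near one interior point of the orbit $\{\beta^k t_i\}_{k=0}^{v_i-1}$ can be arranged to enforce $\psi^{v_i}_{\beta,i}(t_i)=1$ exactly; by a slight asymmetry of the bump one may moreover prescribe $\bigl|(\psi^{v_i}_{\beta,i})'(t_i)\bigr|$ to be any value in a bounded range around $|\beta|^{v_i}\sim 1/|t_i|$. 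The target interval in (\ref{i.ifs1}) has ratio $4(1+|\lambda|)/|\lambda|>|\lambda|^{-1}$, wider than one factor of $|\lambda|$, so a suitable integer $w_i$ makes $|\lambda|^{w_i}/|t_i|$ fall strictly inside; this settles (\ref{i.ifs1}), with $v_i,w_i\to\infty$.

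For (\ref{i.ifs4}) take $j_0:=j_i+1$ and $\bar\ell:=\ell_i-1$, so that $y_i:=\lambda^{\bar\ell}\theta_2(\zeta_{j_0})+t_i=(1-\lambda)t_i$ has magnitude comparable to $|t_i|$ but ratio $y_i/t_i=1-\lambda$ is not a power of $\beta$ for generic $\beta$; let $\bar n$ be defined from $y_i$ in the same way $v_i$ was defined from $t_i$. After a preliminary generic choice of $\beta$ (arranged by a small initial perturbation of $f$), the finite orbits $\{\beta^k t_i\}_{k=0}^{v_i-1}$ and $\{\beta^k y_i\}_{k=0}^{\bar n-1}$ are disjoint, and a second $C^1$-small bump modification of $\psi_{\beta,i}$, supported in a small neighborhood of one point of the $y_i$-orbit disjoint from the support of the first, forces $\psi^{\bar n}_{\beta,i}(y_i)=1$ without disturbing (\ref{i.ifs1}) or (\ref{i.ifs3}). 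The main technical obstacle is this simultaneous tuning: both corrections must be $C^1$-small (which holds because $\beta^{v_i}t_i$ and $\beta^{\bar n}y_i$ are bounded multiples of $1$ and the corrections can be spread over arbitrarily many intermediate orbit points, making the per-step cost tend to $0$) and must not interfere (which holds because the orbits of $t_i$ and $y_i$ are disjoint for generic $\beta$, a condition secured by the initial perturbation of $f$).
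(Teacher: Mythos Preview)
Your argument contains a concrete computational error that breaks the whole strategy. From (C3)(3) in Proposition~\ref{p.simple} the central transition map is $\theta_2(x)=\pm(x-1)-1$, so $\theta_2(1)=-1$ in either case (this is forced by (C3)(2): $X_Q=(0^s,1,0^u)$ is sent to $X_P=(0^s,-1,0^u)$). You wrote $\theta_2(x)=\pm(x-1)$, hence $\theta_2(1)=0$, and built everything on the resulting simplification
\[
\tilde\Gamma^{v,w}_{t_i}(1)=\psi^{v}_{\beta,i}(t_i),
\]
which is independent of $w$. The correct expression is
\[
\tilde\Gamma^{v,w}_{t_i}(1)=\psi^{v}_{\beta,i}\bigl(-\lambda^{w}+t_i\bigr),
\]
so the fixed-point condition and the derivative condition in item~(\ref{i.ifs1}) are genuinely coupled through $v$ and $w$. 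Your procedure of first choosing $v_i$ so that $\beta^{v_i}t_i$ lands near $1$, and only afterwards picking $w_i$ to tune the derivative, therefore does not produce $\tilde\Gamma^{v_i,w_i}_{t_i}(1)=1$. The subsequent computation $y_i=(1-\lambda)t_i$ for item~(\ref{i.ifs4}) fails for the same reason.

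The paper handles this coupling head on: one first perturbs the multipliers so that $\beta^{-m}=\lambda^{k}(1-\lambda)$ (resp.\ $\beta^{-2m}=\lambda^{2k}(1-\lambda^{2})$), takes $t_k=\lambda^{k}$, and then $(v,w)=(m,k+1)$ satisfies both the fixed-point equation and the derivative bound simultaneously, with \emph{no} perturbation of $\psi_\beta$ needed for item~(\ref{i.ifs1}). Items~(\ref{i.ifs3}) and~(\ref{i.ifs4}) are then obtained by a further adjustment $t_k\mapsto t_k+\mu_j$ (with a matching change of $\beta$ keeping the fixed-point condition), and a \emph{single} perturbation of $\psi_\beta$, supported in $[\omega_{j\pm 1}(\mu_j),\beta(\mu_j)^{-m}]$, i.e.\ disjoint from the interval $[\beta(\mu_j)^{-m-1},1]$ where condition~(\ref{i.ifs1}) lives. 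Thus the non-interference you worry about is guaranteed by construction, and there is no need for a genericity-of-$\beta$ argument or a second bump. If you correct $\theta_2$ your scheme would have to be reorganized along similar lines.
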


We postpone the proof of this lemma to the next subsection. 

\begin{proof}[Proof of Proposition~\ref{p.l.R}]
Note that for each $t_i$  there is a perturbation
$f_i$ of $f$, $f_i\to f$ as $i\to \infty$,  having a semi-simple cycle
$\sscyc (f_i,Q,P, \psi_{\be,i} , \la, +,\pm_2)$
``close" to the initial cycle 
$\scyc
(f,Q,P,\be,\la,+,\pm_2)$ (i.e., we replace the linear map $\psi_\be$
by its perturbation $\psi_{\be,i}$, while preserving the cycle configuration).

For large $i$,
write $g=f_{i}$ and
select the pair
$(v_i,w_i)$ in item (\ref{i.ifs1}) of Lemma~\ref{l.ifs}.
Let
 $S_g=(s^s,1,s^u)$ be the saddle associated to
 this pair and the central coordinate ``$1$" 
  given by
(A) in Proposition~\ref{p.dictionary}. 
By construction, the
eigenvalue $\la_c(S_g)$ of $Dg^{\pi(S_g)}(S_g)$ corresponding to
the central direction $E^\ct_g$ satisfies
\begin{equation*}
 \dfrac{|\la |^2 }{2\, (1-|\la|^2)}< |\la_c(S_g)|<
 \dfrac{2\,|\la|}{1-|\la|}.
\end{equation*}
We claim that $S_g$ also satisfies the intersection properties in the proposition
(note that in principle  $S_g$ is not yet a saddle-node/flip).
\begin{itemize}
\item 
Itens
(\ref{i.R1}) and (\ref{i.R2}) in the proposition  follow from
equation \eqref{e.initemA}
in 
 item (A) of
Proposition~\ref{p.dictionary}.
\item Item (\ref{i.R4}) in the proposition follows from
(\ref{i.ifs4}) in Lemma~\ref{l.ifs} and (E) in
Proposition~\ref{p.dictionary}, where $d=\zeta_{j\pm 1}$
corresponds to  adapted homoclinic points (recall also
Remark~\ref{r.adaptedintersections}). Note that using these points we
also get that $W^\ut(P,g)$ transversely intersects $[-1,1]^s\times \II
\times \{0^u\}$, proving 
Remark~\ref{r.preadapted}.
\item Item (\ref{i.R5}) in the proposition
follows from (\ref{i.ifs3}) in Lemma~\ref{l.ifs}
 and (C) in
Proposition~\ref{p.dictionary},  where $d=\zeta_{j}$ corresponds to
an adapted homoclinic point.
\item Since transverse homoclinic intersections persist and
the saddle $P$ has transverse homoclinic points 
for the diffeomorphism $f$, we get \eqref{i.R6} in the proposition.
\end{itemize}

It remains to see that we can take $S_g$ with
$\la_c(S_g)=\pm 1$.
Observe that the period $\pi(S_g)$ of $S_g$ can be taken arbitrarily large and 
$|\la_c(S_g)|$ is uniformly bounded (independent
of the period). Arguing as in Lemma~\ref{l.reduction}, we
 perturb $g$ along the orbit of
$S_g$ in order to transform this point into a saddle-node (if $\la_c(S_g)>0$)
or a flip (if $\la_c(S_g)<0$). In this way one gets a partially hyperbolic saddle-node/flip.
This perturbation can be done preserving the intersection properties 
in the proposition. 
This concludes the proof of the proposition.
\end{proof}

\subsubsection{Proof of Lemma~\ref{l.ifs}}
Let us first consider the case $\be>0$ and $\la>0$.

\medskip

\noindent {\em{Positive central multipliers:}} 
As above, after an
arbitrarily small perturbation of the central multipliers of
cycle, we can assume that there are arbitrarily large $m$ and $k$
with
\begin{equation}\label{e.bothpositive}
\beta^{-m}=\lambda^{k}\, (1-\lambda).
\end{equation}
Consider the parameter $t_k=\lambda^k$. This choice gives
$$\
\begin{array}{ll}
\Gamma_{t_k}^{m,k+1}(1)&= \psi^m_\be \circ \theta_{1,t_k}\circ
\phi_\lambda^{k+1} \circ \theta_2(1)= \psi^m_\be \circ
\theta_{1,t_k} (-\lambda^{k+1})\\
&= \beta^m\,
(\lambda^k-\lambda^{k+1})=\beta^m\,\lambda^k\,(1-\lambda)=1.
\end{array}
$$
Take $(v_k,w_k)=(m,k+1)$ and note that
$$
(\Gamma_{t_k}^{v_k,w_k})^\prime (1)=\pm \, \be^m\, \la^{k+1}= \pm
\frac{\la}{1-\la}.
$$
This gives (\ref{i.ifs1})  in the lemma. To obtain the other
conditions we consider perturbations $\tilde \psi_\be$ of
$\psi_\be$  preserving the condition $\Gamma_{t_k}^{v_k,w_k}(1)=1$.
 From now on we fix the parameter $t_k$.
We first consider the case where $\theta_2$ has derivative +1.

\smallskip

\noindent {\em{Case $\theta_2(1+x)=-1+x$:}} For every small enough
$\mu$, define $\be(\mu)$ by
$$
\be(\mu)^m\, (\la^k (1-\la) +\mu)=1
$$
and  consider its associated linear map $\psi_{\be(\mu)}(x)=\beta(\mu)\, x$.
 Write $\phi_\la(x)=\la\,x$.
 Note that the 
 IFS $\tilde \Ga_{t_k+\mu}^{i,j}$
 associated to $\phi_\la$, $\psi_{\be(\mu)}$, $\theta_{t_k+\mu}$, and $\theta_2$
  satisfies
\begin{equation}\label{e.fixedpoint}
\tilde \Ga_{t_k+\mu}^{m,k+1} (1)= \psi_{\be(\mu)}^m\circ
\theta_{1,t_k+\mu} \circ \phi_\la^{k+1}\circ \theta_2(1)= 1,
\quad \mbox{for all small $\mu$.}
\end{equation}
Thus, for $(v_k,w_k)=(m,k+1)$,
\begin{equation}
\label{e.recallderivative}
(\tilde \Gamma_{t_k+\mu}^{v_k,w_k})^\prime (1)=
\be(\mu)^m\,\la^{k+1}= \frac{\la}{1-\la+\mu}.
\end{equation}
Thus, for small $\mu$, these derivatives also satisfy
(\ref{i.ifs1}).

Consider $\zeta_i$ as in Remark~\ref{r.adaptedintersections},
that is
$\zeta_i=1-\de_i$, $\de_i\to 0^+$ and $\de_i>\de_{i+1}$. For
large $i$ define
\begin{equation}
\label{e.omegai}
\omega_i(\mu)=\theta_{1,t_k+\mu}\circ \phi_\lambda^k \circ
\theta_2(\zeta_i)=\theta_{1,t_k+\mu}(-\la^k-\la^{k}\, \de_i)= \mu
-\la^{k}\,\de_i.
\end{equation}
Note that
\begin{equation}\label{e.diference}
\omega_{i+1}(\mu)-\omega_i(\mu)= \la^k\, (\de_i-\de_{i+1}).
\end{equation}
Define small $\mu_j>0$ by the condition
$$
\omega_j(\mu_j)=0, \qquad \mu_j=\la^k\,\de_j, \qquad  \lim_{j \to
\infty} \mu_j\to 0.
$$
By the choice of $\mu_j$ and \eqref{e.diference} one has
$$
 \omega_{j+1}(\mu_j)= \la^k\, (\de_j-\de_{j+1}), \qquad
\lim_{j\to \infty} \omega_{j+1}(\mu_j)\to 0^+.
$$
In particular, $\omega_{j+1}(\mu_j)$ can be taken arbitrarily
small in comparison with $\be(\mu_j)^{-m}= \la^k\, (1-\la)+\mu_j$.
This immediately implies the following:

\begin{fact}
Given any $N>0$ there is large $j$ such that
 $[\omega_{j+1}(\mu_j), \be(\mu_j)^{-m}]$ contains
at least $N$ consecutive fundamental domains of 
 $\psi_{\be(\mu_j)}$.
\end{fact}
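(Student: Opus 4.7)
The plan is to re-express the statement multiplicatively. A fundamental domain of the linear expansion $\psi_{\be(\mu_j)}(x)=\be(\mu_j)\,x$ on the positive half-line is any interval of the form $[a,\be(\mu_j)\,a]$ with $a>0$; consequently, an interval $[c,d]\subset(0,\infty)$ contains at least $N$ consecutive such fundamental domains if and only if $d/c\ge \be(\mu_j)^{N}$. Therefore, the Fact reduces to exhibiting arbitrarily large $j$ for which
\[
\frac{\be(\mu_j)^{-m}}{\omega_{j+1}(\mu_j)}\;\ge\;\be(\mu_j)^{N}.
\]

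The second step is a direct substitution of the explicit formulas already produced. By the defining relation $\be(\mu_j)^{m}\,(\la^{k}(1-\la)+\mu_j)=1$, the numerator equals $\la^{k}(1-\la)+\mu_j$. By \eqref{e.diference} together with the choice $\omega_j(\mu_j)=0$, the denominator equals $\omega_{j+1}(\mu_j)=\la^{k}(\de_j-\de_{j+1})$. Since $\zeta_j\to 1$, the sequence $(\de_j)=(1-\zeta_j)$ is a strictly decreasing positive null sequence, so its consecutive differences $\de_j-\de_{j+1}=\zeta_{j+1}-\zeta_j$ are positive and tend to $0$ (the consecutive differences of any convergent real sequence vanish). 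Hence, as $j\to\infty$, the denominator tends to $0$ while the numerator tends to the positive constant $\la^{k}(1-\la)$, so the displayed ratio tends to $+\infty$.

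Finally, since $\mu_j\to 0$, the multiplier $\be(\mu_j)$ converges to the original central multiplier $\be$, so for each fixed $N$ the right-hand side $\be(\mu_j)^{N}$ stays bounded. Combined with the previous paragraph, this yields the required inequality for all sufficiently large $j$, and an interval with the asserted ratio of endpoints automatically accommodates $N$ consecutive fundamental domains of $\psi_{\be(\mu_j)}$. The only non-routine point is the elementary observation that a convergent sequence has vanishing consecutive differences; everything else is plugging into the formulas already established for $\mu_j$, $\be(\mu_j)$, and $\omega_i(\mu)$.
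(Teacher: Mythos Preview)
Your argument is correct and follows essentially the same route as the paper: the paper's entire justification is the sentence immediately preceding the Fact, namely that $\omega_{j+1}(\mu_j)\to 0^+$ while $\be(\mu_j)^{-m}=\la^k(1-\la)+\mu_j$ stays bounded away from zero, so the ratio blows up. You have simply made this explicit by rewriting the containment of $N$ fundamental domains as the multiplicative condition $\be(\mu_j)^{-m}/\omega_{j+1}(\mu_j)\ge\be(\mu_j)^{N}$ and then verifying the limit via the formulas for $\mu_j$, $\be(\mu_j)$, and $\omega_i(\mu)$.
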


Using this fact, we get that
for every large $j$ there is a small  perturbation $\tilde \psi_{\be(\mu_j)}$ of
the linear map
$\psi_{\be(\mu_j)}$ such that:
\begin{itemize}
\item 
$\tilde \psi_{\be(\mu_j)}(x)=\psi_{\be(\mu_j)}(x)$ if $x\in
[\be(\mu_j)^{-m-1},1]$.
\item
There is large $n_j$ such that $\tilde \psi_{\be(\mu_j)}^{n_j}
(\omega_{j+1}(\mu_j))=\be(\mu_j)^{-m}$. 
\item
The maps $\tilde \psi_{\be(\mu_j)}$ and 
$\psi_{\be(\mu_j)}$  coincide in a small neighborhood of $0$.
\item
The size of the perturbation goes to $0$ as $j\to \infty$.
\end{itemize}

\begin{remark}
Note that the first two conditions above imply that
\begin{equation}\label{e.summarizing}
\tilde
\psi_{\be(\mu_j)}^{n_j+m} (\omega_{j+1}(\mu_j))=1.
\end{equation}
Also important, note that this perturbation can be done (and we do)
in such a way
previous conditions   \eqref{e.fixedpoint},
 \eqref{e.recallderivative}, and
 \eqref{e.omegai} are preserved.
\end{remark}

The previous construction can be summarized as follows. 
Fix large $k$ and the sequence of  parameters $t_{k,j}=t_k+\mu_j$.  For each large $j$,
consider
 the perturbation
$\tilde \psi_{\be(\mu_j)}$ of $\psi_{\be}$ and the IFS $\tilde
\Ga^{\ell,n}_{t_{k,j}}$ corresponding to $\tilde
\psi_{\be(\mu_j)}$, $\phi_\la$, $\theta_{1,t_{k,j}}$, and $\theta_2$.
Then
\begin{enumerate}[{\bf(i)}]
\item
$\tilde \Gamma_{t_{k,j}}^{v_k,w_k}(1)=1$, (recall \eqref{e.fixedpoint}),
\item
$(\tilde \Gamma_{t_{k,j}}^{v_k,w_k})^\prime (1)=
\dfrac{\la}{1-\la+\mu_j}$, (recall \eqref{e.recallderivative}),
\item $\theta_{1,t_{k,j}}\circ
\phi_\lambda^k \circ \theta_2(\zeta_j)=
\omega_j (\mu_j)=
0$, (recall the choice of $\mu_j$ and \eqref{e.omegai}), and
\item
$\tilde
\Ga_{t_{k,j}}^{n_j+m,k}(\zeta_{j+1})=
 \tilde
\psi_{\beta(\mu_j)}^{n_j+m} \circ \theta_{1,t_{k,j}}\circ
\phi_\lambda^k \circ \theta_2(\zeta_{j+1})=
\tilde
\psi_{\be(\mu_j)}^{n_j+m} (\omega_{j+1}(\mu_j))=
1$, (recall \eqref{e.summarizing}).
\end{enumerate}

To conclude the proof the lemma in this first  case
(positive multipliers and $\theta_2(1+x)=-1+x$)
 just note that
 (i)--(ii) correspond to (\ref{i.ifs1}) in the lemma,
 (iii) to
 (\ref{i.ifs3}) in the lemma, and (iv) to
(\ref{i.ifs4}) in the lemma.

\smallskip

\noindent {\emph{Case $\theta_2(1+x)=-1-x$:}} We proceed as in the
previous case and define the sequence $\omega_i(\mu)$ similarly.
In this case, instead equation \eqref{e.omegai} we get
$$
\omega_i(\mu)=\theta_{1,t_k+\mu}\circ \phi_\lambda^k \circ
\theta_2(\zeta_i)=\theta_{1,t_k+\mu}(-\la^k+\la^{k}\, \de_i)= \mu
+\la^{k}\,\de_i.
$$
We define $\mu_j$ as above, $\omega_j(\mu_j)=0$, and consider
$\omega_{j-1}(\mu_j)>0$ instead of $\omega_{j+1}(\mu_j)$. The
proof now follows as above.

\medskip

\noindent {\emph{Non-positive central multipliers:}} In this case,
after an arbitrarily small perturbation of the central multipliers
of cycle, we can assume that there are arbitrarily large $m$ and
$k$ with
\begin{equation}\label{e.somenonpositive}
\beta^{-2\,m}=\lambda^{2\,k}\, (1-\lambda^2).
\end{equation}
We consider the parameter $t_k=\la^{2\,k}$. The proof now follows
exactly as in the case where the multipliers are both positive
considering the sequences
$$
\omega_i(\mu)=\theta_{1,t_k+\mu}\circ \phi_\lambda^{2\,k} \circ
\theta_2(\zeta_i)=\theta_{1,t_k+\mu}(-\la^{2\,k}\pm \la^{2\,k}\,
\de_i)= \mu \pm \la^{2\,k}\,\de_i.
$$
This completes the proof of Lemma~\ref{l.ifs}. \hfill \qed

\subsection{Cycles associated to a bi-accumulated saddles}
\label{ss.cyclesbiaccumulatedbis}

Given a periodic point $R$ of  $f$, consider
the eigenvalues $\la_1(R),\dots, \la_n(R)$ of $Df^{\pi(R)}(R)$
ordered in increasing modulus  and counted with multiplicity.
Denote by
$\operatorname{Per}^{k} (f)$ the set of (hyperbolic) saddles $R$ of
$f$ of
$\st$-index $k$ satisfying $|\la_{k-1}(R)|<|\la_{k}(R)|<1$.
Given such a saddle $R\in \operatorname{Per}^{k} (f)$, its
local strong
stable manifold $W^\sst_\loc (R,f)$ is well defined 
(recall that $W^\sst (R,f)$ is
the unique invariant manifold tangent to the eigenspace associated
to  $\la_1(R),\dots,\la_{k-1}(R)$). Moreover,
$W^\sst_\loc(R,f)$ has codimension one in  $W^\st_\loc(R,f)$ and
$W^\st_\loc(R,f)
 \setminus W^\sst_\loc (R,f)$ has $2\,\pi(R)$ connected
components (indeed $W^\sst_\loc (R,f)$ splits each component of $W^\st_\loc(R,f)$
into two parts).

 Given a saddle $P$
of $\st$-index $s+1$, we consider the
following subsets of $H(P,f)$:
\begin{itemize}
\item $\operatorname{Per}_h (H(P,f))$ is the subset of $H(P,f)$ of hyperbolic
periodic points $R$ which are homoclinically related to $P$ (thus $R$ also has index $(s+1)$),
\item
$\operatorname{Per}_h^{s+1} (H(P,f))=\operatorname{Per}_h (H(P,f))
\cap \operatorname{Per}^{s+1} (f)$.
\end{itemize}

\begin{definition}[Bi-accumulation property]
A saddle  $R\in \operatorname{Per}^{s+1} (f)$ is 
{\emph{$\st$-bi-accumulated}} (by homoclinic points) if every component of
$(W^\st_\loc(R,f)\setminus W^\sst_\loc (R,f))$ contains transverse
homoclinic points of $R$. \label{d.biaccumulation}
\end{definition}

We have the following result.

\begin{lemma}\label{l.summary}
Let $f$ be a diffeomorphism with a coindex one cycle
associated to $P$ and $Q$ such that $H(P,f)$ is non-trivial.
Let $\sind (P)=s+1$.
 Then
there is $g$ arbitrarily $C^1$-close to $f$  such that
\begin{itemize}
\item
there is a  saddle $\bar P_g\in \operatorname{Per}_h^{s+1} (H(P_g,g))$ 
that is
$\st$-bi-accumulated and
\item
the diffeomorphism $g$ has a cycle associated to
$\bar P_g$ and $Q_g$.
\end{itemize}
\end{lemma}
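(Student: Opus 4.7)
The plan is as follows. First, by a small $C^1$-perturbation I may assume that $Df^{\pi(P)}(P)$ has simple real eigenvalues with $|\la_s(P)|<|\la_{s+1}(P)|<1$, so that $W^\sst_\loc(P,f)$ is well defined and has codimension one in $W^\st_\loc(P,f)$; neither the cycle with $Q$ nor the nontriviality of $H(P,f)$ is destroyed by such a perturbation. The nontriviality of $H(P,f)$ provides a transverse homoclinic point $h\ne P$ of $P$, and the Smale--Birkhoff theorem then produces a basic hyperbolic set $\Lambda_0 \subset H(P,f)$, conjugate to a full shift on finitely many symbols, containing both $P$ and $h$. Every saddle of $\Lambda_0$ of $\st$-index $s+1$ is automatically homoclinically related to $P$, hence lies in $\operatorname{Per}_h(H(P,f))$.

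The core step is to locate, and if necessary perturb, a saddle $\bar P\in\Lambda_0$ of $\st$-index $s+1$ which is $\st$-bi-accumulated. I take $\bar P$ to be a periodic point of $\Lambda_0$ of large period, most of whose orbit lies near $P$, and use a small $C^1$-perturbation localized along its orbit to ensure that $|\la_s(\bar P)|<|\la_{s+1}(\bar P)|$ with $\la_{s+1}(\bar P)$ a simple real eigenvalue. The set $\Lambda_0\cap W^\st_\loc(\bar P,f)$ accumulates on $\bar P$ tangentially to the weak stable direction; combined with the existence of horseshoe itineraries realizing both signs of the weak-stable coordinate, and with the inclusion $\Lambda_0\subset\overline{W^\ut(\bar P,f)\pitchfork W^\st(\bar P,f)}$ coming from transitivity of $\Lambda_0$, this produces transverse homoclinic points of $\bar P$ in both connected components of $W^\st_\loc(\bar P,f)\setminus W^\sst_\loc(\bar P,f)$. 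When $\la_{s+1}(\bar P)<0$ bi-accumulation follows essentially for free from sign alternation under iteration of a single homoclinic orbit; when $\la_{s+1}(\bar P)>0$ one must produce the ``missing side'' intersections by a further connecting-lemma style perturbation along an orbit of $\Lambda_0$ approaching $\bar P$ from that side.

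Finally, I transfer the cycle from $P$ to $\bar P$ via the $\lambda$-lemma. Since $\bar P$ is homoclinically related to $P_g$, $\overline{W^\st(\bar P,g)}\supset W^\st(P_g,g)$ and $\overline{W^\ut(\bar P,g)}\supset W^\ut(P_g,g)$, so the transverse intersection $W^\st(P_g,g)\pitchfork W^\ut(Q_g,g)$ from the cycle transfers immediately to a transverse intersection $W^\st(\bar P,g)\pitchfork W^\ut(Q_g,g)\ne\emptyset$, while the quasi-transverse intersection $W^\ut(P_g,g)\cap W^\st(Q_g,g)$ yields points of $W^\ut(\bar P,g)$ arbitrarily close to $W^\st(Q_g,g)$; an arbitrarily small perturbation localized at such a point creates the required intersection $W^\ut(\bar P)\cap W^\st(Q)\ne\emptyset$ without destroying bi-accumulation or the other transverse intersection. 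The principal obstacle is the bi-accumulation step: in the positive weak-stable case one must carefully produce the missing-side homoclinic intersections by local perturbation while simultaneously preserving the cycle with $Q$ and the homoclinic intersections already present on the first side.
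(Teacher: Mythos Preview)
Your approach differs substantially from the paper's. The paper does not construct the bi-accumulated saddle by hand at all: it simply invokes two external results, namely \cite[Proposition~2.3]{ABCDW07} (a perturbation making $\operatorname{Per}_h^{s+1}(H(P_g,g))$ infinite while preserving the cycle with $Q_g$) and \cite[Lemma~3.4]{DG} (a further perturbation producing infinitely many $\st$-bi-accumulated saddles in that set, again preserving the cycle). After picking one such $\bar P_g$ and noting that bi-accumulation is $C^1$-open, only your final cycle-transfer step remains, and the paper carries it out essentially as you do. So the comparison is: the paper outsources the hard step to \cite{ABCDW07,DG}, while you try to manufacture bi-accumulation directly from a Smale--Birkhoff horseshoe.

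Your direct construction has a genuine gap at exactly that point. The assertion that ``horseshoe itineraries realize both signs of the weak-stable coordinate'' presupposes that the basic set $\Lambda_0$ carries a further dominated splitting $E^{\sst}\oplus E^{\operatorname{ws}}$ of its stable bundle, so that a weak-stable coordinate on $\Lambda_0\cap W^\st_\loc(\bar P)$ is well defined and the symbolic Cantor structure projects onto it. Nothing in your setup arranges this: having $|\la_s|<|\la_{s+1}|$ at the single orbits $P$ and $\bar P$ does not give a dominated splitting on $\Lambda_0$, because the excursion along the homoclinic orbit $h$ can freely mix the strong and weak stable directions. Without such a splitting the Cantor set $\Lambda_0\cap W^\st_\loc(\bar P)$ may perfectly well lie on a single side of $W^\sst_\loc(\bar P)$. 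Your fallback in the positive-eigenvalue case --- a ``connecting-lemma style perturbation along an orbit of $\Lambda_0$ approaching $\bar P$ from that side'' --- is then circular: the existence of such an orbit in $W^\st_\loc(\bar P)$ is precisely the bi-accumulation you are trying to obtain, and there is no reason any orbit of $\Lambda_0$ (or of $W^\ut(\bar P)$) ventures to the missing side at all. Producing the missing-side homoclinic intersection is exactly the non-trivial content of \cite[Lemma~3.4]{DG}; it is not a routine local perturbation, and your sketch does not supply a mechanism for it.
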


\begin{proof}
The lemma 
follows from  \cite{ABCDW07,DG}.
From 
\cite[Proposition 2.3]{ABCDW07}, 
if $H(P,f)$ is non-trivial then there is $g$ arbitrarily
$C^1$-close to $f$ with a cycle associated to $P_g$ and $Q_g$ and
such that $\operatorname{Per}_h^{s+1} (H(P_g,g))$ is infinite.

By \cite[Lemma 3.4]{DG}, 
if the set $\operatorname{Per}_h^{s+1}(H(P,f))$ is infinite
then there is a diffeomorphism $g$ arbitrarily $C^1$-close to $f$  with a cycle
associated to $P_g$ and $Q_g$ and such that
$\operatorname{Per}_h^{s+1} (H(P_g,g))$ contains infinitely many
$\st$-bi-accumulated saddles.
Pick one of these saddles $\bar P_g$ and note that to be bi-accumulated
is a property that
persists under perturbations.
 We can now perturb $g$ to get $h$
with a cycle associated to $\bar P_h$ and $Q_h$, ending the
proof of the  lemma.
\end{proof}




\section{Stabilization of cycles.
Proof of Theorem~\ref{th.proposition}}
\label{s.stabilization}

\subsection{Stabilization of non-twisted cycles}
	\label{ss.nonreversing}
Next
proposition  is the main step to prove the 
stabilization of non-twisted cycles.

\begin{proposition}
\label{p.nonreversingm} Let  $f$ be a diffeomorphism with a
non-twisted cycle associated to saddles $P$ and $Q$ such that
$\sind (P)=\sind (Q)+1$.  Then there is a diffeomorphism $g$
arbitrarily $C^1$-close to $f$ with a partially hyperbolic
saddle-node/flip $S_g$ such that:
\begin{enumerate}
\item \label{i.pos1} $W^{\sst}(S_g,g) \pitchfork W^{\ut}(Q_g,g)\ne\emptyset$,
\item  \label{i.pos2} $W^{\uut}(S_g,g) \pitchfork W^{\st}(P_g,g)\ne\emptyset$,
\item   \label{i.pos3} $W^{\uut}(S_g,g) \cap W^{\sst}(S_g,g)$
contains a point that is not in the orbit of $S_g$
(strong homoclinic intersection),
\item \label{i.pos5}
$W^{\sst}(S_g,g) \cap W^{\ut}(P_g,g)\ne\emptyset$, and
\item \label{i.pos4}
$W^{\uut}(S_g,g) \cap W^{\st}(Q_g,g)\ne\emptyset$.
\end{enumerate}
\end{proposition}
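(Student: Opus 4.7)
The plan is to combine Lemma~\ref{l.adapted} with an enhanced IFS construction extending Lemma~\ref{l.ifs}. First I would apply Lemma~\ref{l.adapted} to replace $f$ by a $C^1$-close diffeomorphism having a non-twisted simple cycle (associated to $P$ and $Q$) together with a sequence of adapted homoclinic intersections. Proposition~\ref{p.l.R} then furnishes a further perturbation $g$ and a partially hyperbolic periodic point $S_g\in \La_g(V)$ of arbitrarily large period already verifying items (\ref{i.pos1}), (\ref{i.pos2}), (\ref{i.pos5}) and the non-triviality of $H(P_g,g)$. Only items (\ref{i.pos3}) and (\ref{i.pos4}) remain to be established, and in addition $S_g$ has to be promoted to an honest saddle-node/flip; the two new intersection properties must be obtained without destroying the ones Proposition~\ref{p.l.R} has already provided.

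For the strong homoclinic intersection (\ref{i.pos3}), item (B) of the dictionary (Proposition~\ref{p.dictionary}) reduces the task to producing a second pair $(\bar k,\bar n)\ne (v_i,w_i)$ in the IFS of Lemma~\ref{l.ifs} with $\tilde\Ga^{\bar k,\bar n}_{t_i}(1)=1$, where $(v_i,w_i)$ and the central coordinate~$1$ are the data from Lemma~\ref{l.ifs}(\ref{i.ifs1}). For the heteroclinic intersection (\ref{i.pos4}) between $W^{\uut}(S_g,g)$ and $W^{\st}(Q,g)$, item (D) of the dictionary reduces the task to producing a triple $(i,\tilde k,\tilde n)$ with $\theta_{1,t_i}\circ \phi_\la^i\circ \theta_2\circ \tilde\Ga^{\tilde k,\tilde n}_{t_i}(1)=0$; combining this with the adapted-homoclinic equality (\ref{i.ifs3}) of Lemma~\ref{l.ifs}, it is enough to ensure that some composition $\tilde\Ga^{\tilde k,\tilde n}_{t_i}$ carries $1$ to the adapted homoclinic height $\zeta_j$. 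Each of these is a single scalar constraint and can be enforced by a small additional perturbation of $\psi_\be$ along a long segment of the orbit of $Q$, exploiting the combinatorial richness of the compositions $\psi_\be^k\circ \theta_{1,t}\circ \phi_\la^n\circ \theta_2$ already used in the proof of Lemma~\ref{l.ifs}.

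The main obstacle is to coordinate all these constraints simultaneously: the fixed-point equation $\tilde\Ga^{v_i,w_i}_{t_i}(1)=1$ together with its derivative bound, the adapted-homoclinic equalities (\ref{i.ifs3})--(\ref{i.ifs4}), and the two new conditions coming from items (B) and (D) of the dictionary. My strategy is to carry out the perturbations in consecutive stages, choosing the new iteration exponents very large so that their supporting orbit segments are disjoint from the supports of the earlier perturbations; each stage then leaves the previous data essentially untouched. Once all IFS conditions are in place, a final perturbation along the orbit of $S_g$ in the spirit of Lemma~\ref{l.reduction} adjusts the central eigenvalue $\la_c(S_g)$ to $\pm 1$, turning $S_g$ into a genuine partially hyperbolic saddle-node or flip while preserving all the intersection properties (\ref{i.pos1})--(\ref{i.pos5}).
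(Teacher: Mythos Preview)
Your plan is correct and workable, but the paper takes a shorter route for items (\ref{i.pos3}) and (\ref{i.pos4}). Rather than enriching the IFS with two further scalar constraints (a second return time for item~(B) and a new composite equality for item~(D) of Proposition~\ref{p.dictionary}), the paper derives (\ref{i.pos3}) and (\ref{i.pos4}) \emph{geometrically} from the intersections already produced by Proposition~\ref{p.l.R}. Since $W^{\uut}(S_g,g)\pitchfork W^{\st}(P_g,g)\ne\emptyset$ (item~(\ref{i.pos2})), the inclination lemma forces $W^{\uut}(S_g,g)$ to accumulate on $W^{\ut}(P_g,g)$; combined with the quasi-transverse intersection $W^{\ut}(P_g,g)\cap W^{\st}(Q_g,g)\ne\emptyset$ from Proposition~\ref{p.l.R}(\ref{i.R5}), segments of $W^{\uut}(S_g,g)$ approach $W^{\st}_\loc(Q_g,g)$. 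A small perturbation then realizes $W^{\uut}(S_h,h)\cap W^{\st}(Q_h,h)\ne\emptyset$ (item~(\ref{i.pos4})), and one can keep another such segment on the same side of $W^{\st}(Q_h,h)$ as $W^{\sst}(S_h,h)$; a further modification of the central derivative at $Q$ pushes that segment onto $W^{\sst}(S_h,h)$, giving the strong homoclinic intersection (\ref{i.pos3}).

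Your IFS-based approach stays entirely within the combinatorial framework of Lemma~\ref{l.ifs} and Proposition~\ref{p.dictionary}, which is conceptually tidy but requires juggling five or six simultaneous scalar equalities and checking that each new perturbation of $\psi_\be$ really has support disjoint from the fundamental domains used by the earlier ones. This can be done, but it is more delicate than you indicate: the fixed-point condition $\tilde\Ga^{v_i,w_i}_{t_i}(1)=1$ already constrains $\tilde\psi_{\be}$ on a specific orbit segment, and you must verify that the orbit segments for your new pairs $(\bar k,\bar n)$ and $(\tilde k,\tilde n)$ avoid it. The paper's geometric argument sidesteps all this bookkeeping at the cost of leaving the IFS picture and invoking the inclination lemma directly.
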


The dynamical configuration in the proposition is depicted in Figure~\ref{f.propnon}.

We postpone the proof of this proposition to Section~\ref{sss.prooftheoremnonreversing}. We now prove (A) in Theorem~\ref{th.proposition}.

\begin{figure}[hbtp]
\centering \scalebox{0.75}{\includegraphics[clip]{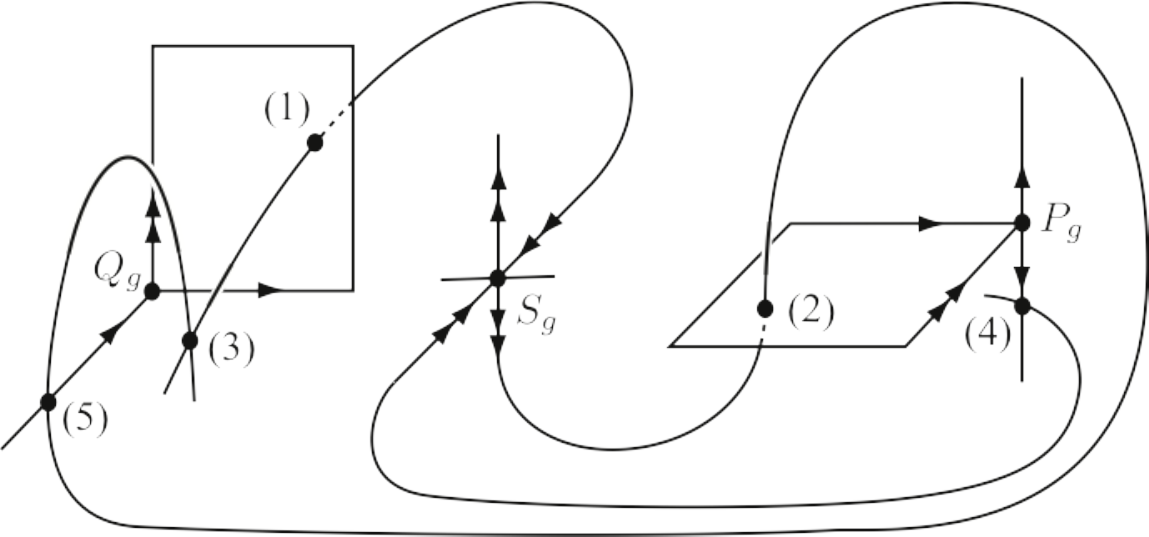}}
\caption{Dynamical configuration in Proposition~\ref{p.nonreversingm}.} \label{f.propnon}
\end{figure}

\subsubsection{Proposition~\ref{p.nonreversingm} implies
(A) in Theorem~\ref{th.proposition}}
\label{sss.prooftheoremnonreversing} 

Note that the transverse intersection conditions immediately imply that
$ \sind (P)=\dim (W^{\sst}(S))+1=s+1$
(condition (1) in Theorem~\ref{t.p.BDblenderb}).
Moreover,
conditions (\ref{i.pos2})--(\ref{i.pos5}) in
Proposition~\ref{p.nonreversingm} imply 
that $S$ and $P$ satisfy (2)--(4) in 
Theorem~\ref{t.p.BDblenderb}.
Thus the diffeomorphism $g$ satisfies all
conditions in Theorem~\ref{t.p.BDblenderb} and 
hence there is $h$
arbitrarily $C^1$-close to $g$ having a robust heterodimensional
cycle associated to $P_h$ and a (transitive) hyperbolic set
$\Gamma_h$ containing a continuation $S_h^+$ of $\st$-index $s$
of $S_g$.

Observe that items (\ref{i.pos1}) and (\ref{i.pos4}) in
Proposition~\ref{p.nonreversingm}
 imply that the saddle $S_h^+$ of $h$ can be chosen such
that
$$
W^{\st}(S^+_h,h) \pitchfork W^{\ut}(Q_h,h)\ne\emptyset 
\quad
\mbox{and} \quad W^{\ut}(S_h^+,h) \cap W^{\st}(Q_h,h)\ne\emptyset.
$$
Thus the saddles $S_h^+$ and $Q_h$ are homoclinically
related and then there is a transitive hyperbolic set
$\Sigma_h$ containing $Q_h$ and $\Gamma_h$. In particular, for
every diffeomorphism $\varphi$ close to $h$ it holds
$W^{\st,\ut}(\Ga_\varphi,\varphi)\subset
W^{\st,\ut}(\Sigma_\varphi,\varphi)$. Thus, by the first step of the proof,
the diffeomorphism $h$ has a robust
cycle associated to $\Sigma_h$ and $P_h$, ending the proof
of  (A) in Theorem~\ref{th.proposition}. \hfill \qed


\subsubsection{Proof of Proposition~\ref{p.nonreversingm}} \label{sss.proofpropositionnonreversingm}
This proposition follows from  Proposition~\ref{p.l.R}. First note that by Lemma~\ref{l.adapted},
after a small perturbation, we can assume that the cycle 
(associated to $P$ and $Q$)
has 
a sequence of adapted homoclinic 
intersections associated to the saddle $P$. Thus applying Proposition~\ref{p.l.R}
we obtain $g$ close to $f$ with a partially hyperbolic saddle-node/flip satisfying
conditions \eqref{i.pos1}, \eqref{i.pos2}, and \eqref{i.pos5} in Proposition~\ref{p.nonreversingm}. It remains to obtain conditions \eqref{i.pos3} ($W^\uut(S_g,g)
\cap W^\sst(S_g,g)$ contains a point that is not in the orbit of $S_g$) and \eqref{i.pos4} ($W^\uut(S_g,g)
\cap W^\st(P_g,g)\ne \emptyset$) in Proposition~\ref{p.nonreversingm}. 
To get these two properties 
we use arguments 
 analogous
 to the ones in Lemmas~\ref{l.adapted} and \ref{l.ifs}.

Since in what follows 
we do not modify the orbits of  $P_g,Q_g$, and $S_g$
let us omit the dependence on $g$.
Note that since 
$W^\uut(S,g)\pitchfork W^\st(P,g)$
(condition \eqref{i.R2} in Proposition~\ref{p.l.R}) we have that
$W^\uut(S,g)$ accumulate to $W^\ut(P,g)$. Since
by condition \eqref{i.R5}
in Proposition~\ref{p.l.R} we have that
 $W^\ut(P,g)\cap W^\st(Q,g)\ne\emptyset$, thus $W^\uut(S,g)$ also 
 accumulates
to $W^\st(Q,g)$. In particular
there are segments of $W^\uut(S,g)$ 
(with disjoint orbits)
arbitrarily close to 
$W^\st_\loc(Q,g)$. We use one of these segments to get 
 $W^\uut(S,h)\cap W^\st(Q,h) \ne \emptyset$
for some $h$ close to $g$ (condition (\ref{i.pos4}) in Proposition~\ref{p.nonreversingm}). 

Moreover, the previous perturbation can be done
in such a way  there are 
segments of $W^\uut(S,h)$ close to $W^\st(Q,h)$ in the ``same side"
of $W^\st(Q,h)$ as $W^\sst(S,h)$. See Figures~\ref{f.perturbation1}
 and \ref{f.perturbation2}. 
Thus modifying the derivative of $Q$
in the central direction  we get that 
$W^\uut(S,h)$ intersects
$W^\sst(S,h)$
(condition (\ref{i.pos3}) in Proposition~\ref{p.nonreversingm}). Note that these perturbations can be done preserving the
saddle-node/flip $S$ and the intersections properties 
 \eqref{i.pos1}, \eqref{i.pos2}, and \eqref{i.pos5} in Proposition~\ref{p.nonreversingm}.  
\hfill \qed


\begin{figure}[hbtp]
\centering \scalebox{0.70}{\includegraphics[clip]{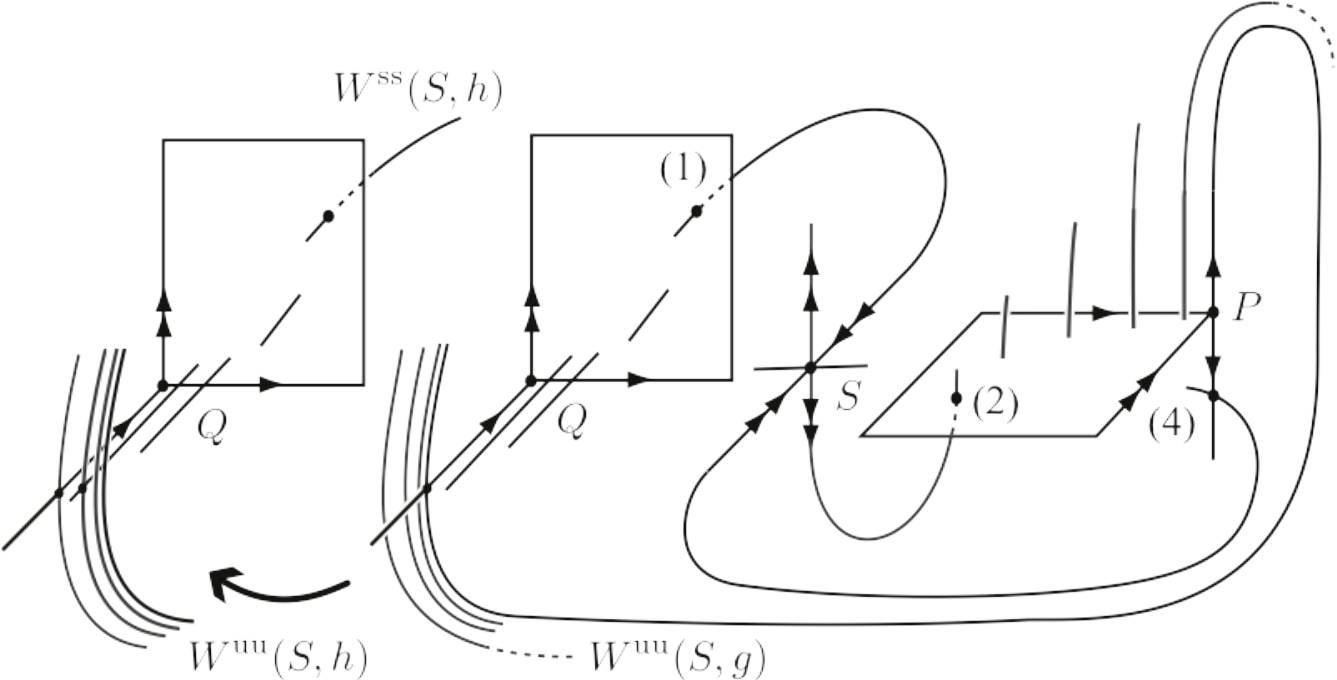}}
\caption{Accumulation of $W^\uut(S)$ to $W^s_\loc(Q)$.} \label{f.perturbation1}
\end{figure}

\begin{figure}[hbtp]
\centering \scalebox{0.70}{\includegraphics[clip]{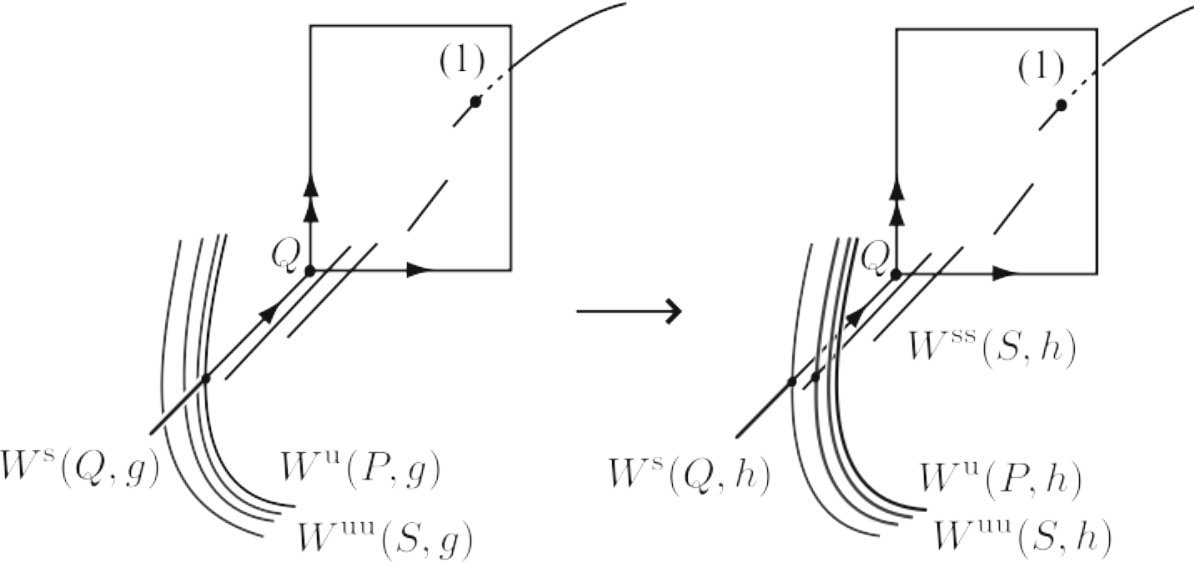}}
\caption{Accumulation of $W^\uut(S)$ to $W^s_\loc(Q)$.} \label{f.perturbation2}
\end{figure}


\subsection{Stabilization of bi-accumulated twisted cycles}
	\label{ss.bitwisted}

In this section we prove item (B) in Theorem~\ref{th.proposition}.

\begin{proposition}[Generation of non-twisted cycles]
\label{p.t.twistednontwisted} Let $f$ be a diffeomorphism 
with a twisted cycle associated to saddles 
$P$ and $Q$ with $\sind (P)=\sind (Q)+1$. Assume that
$P$ is $\st$-bi-accumulated.
Then
there is $g$ arbitrarily $C^1$-close to $f$ with a non-twisted 
 cycle associated
to $Q_g$ and a saddle $R_g$ that is homoclinically related
to $P_g$.
\end{proposition}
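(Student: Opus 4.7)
My plan is to work with a simple model of the twisted cycle and then exploit the bi-accumulation to deform the heteroclinic connection of the $P$-side through a homoclinic excursion that reverses the central orientation.

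First, by the reduction results from Section~\ref{ss.reductiona} and Section~\ref{s.simple} (in particular Theorem~\ref{t.complex} and Proposition~\ref{p.simple}), I may replace $f$ by an arbitrarily close diffeomorphism that has a simple cycle of the form $\scyc(f,Q,P,\beta,\lambda,-,\pm_2)$ with $\beta,\lambda>0$ (this is what it means for the cycle to be twisted). The bi-accumulation hypothesis is a $C^1$-open property, so it is preserved by this reduction; concretely, in the local chart $U_P$ the two components of $W^\st_{\loc}(P,f)\setminus W^\sst_{\loc}(P,f)$ are $\{x^s\}\times (0,\epsilon)\times\{0^u\}$ and $\{x^s\}\times (-\epsilon,0)\times\{0^u\}$, and each component contains a transverse homoclinic point of $P$. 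Note that the quasi-transverse heteroclinic point $Y_P=(0^s,0,a^u)$ and (by (C2)(3)) the image $Y_{Q,0}=(a^s,0,0^u)$ lie on $W^\sst_{\loc}(P)$ and at $x^c=0$ in $U_Q$, i.e.\ exactly on the separating hypersurface.

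Next, I use standard Birkhoff--Smale together with the two transverse homoclinic orbits $\gamma^+\subset\{x^c>0\}$ and $\gamma^-\subset\{x^c<0\}$ to build a horseshoe $\Lambda\ni P$ whose periodic orbits are coded by itineraries in the alphabet $\{P,\gamma^+,\gamma^-\}$. Every periodic point $R\in\Lambda$ is homoclinically related to $P_g$. For a periodic point $R=R(N,\sigma)$ whose itinerary consists of a long sojourn of length $N$ near $P$ followed by a single homoclinic excursion along $\gamma^\sigma$ ($\sigma\in\{+,-\}$), the central multiplier is
\[
\mu_c(R(N,\sigma))\;=\;\lambda^{N}\cdot c_\sigma,
\]
where $c_\sigma\in\RR^\ast$ is the holonomy of the homoclinic excursion $\gamma^\sigma$ in the central bundle. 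Because $Y_P$ sits on $W^\sst_{\loc}(P)$ and the excursions $\gamma^+$, $\gamma^-$ lie on opposite sides of this hypersurface, one of the holonomies $c_\pm$ has opposite sign to the other. Call $\sigma_\ast\in\{+,-\}$ the choice for which $c_{\sigma_\ast}<0$. Since $N$ can be taken arbitrarily large, $R$ is a $\sind(P)$-index saddle with central multiplier of either sign, as needed below.

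Now I construct the new cycle. By the $\lambda$-lemma applied to $W^\ut(R)\supset W^\uut_{\loc}(R)\subset \overline{W^\ut(P)}$ and to the quasi-transverse intersection $W^\ut(P)\cap W^\st(Q)\ni Y_P$, every neighbourhood of $Y_P$ contains a quasi-transverse point $Y_R\in W^\ut(R)\cap W^\st(Q)$; combined with the transverse intersection $W^\st(P)\pitchfork W^\ut(Q)$ near $X_Q$ (which also gives $W^\st(R)\pitchfork W^\ut(Q)\ne\emptyset$ by homoclinic relatedness), this produces a coindex one cycle associated to $R$ and $Q$. The sign of the new unfolding map $\mfT_1^{\text{new}}$ along $Y_R$ differs from $\sign(\mfT_1)=-$ by exactly the sign of the central holonomy of the homoclinic excursion used to build $R$: that is, $\sign(\mfT_1^{\text{new}})=\sign(\mfT_1)\cdot \sign(c_\sigma)$. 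Choosing $\sigma=\sigma_\ast$ yields $\sign(\mfT_1^{\text{new}})=+$. A free choice of parity of the sojourn length $N$ (or an additional excursion along $\gamma^{-\sigma_\ast}$) fixes $\sign(R)=+$, while $\sign(Q)=+$ remains untouched. The resulting cycle has signs $(+,+,+)$, hence is non-twisted, proving the proposition.

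The main technical obstacle is the holonomy/sign computation in the middle paragraph: precisely relating the sign of $c_\sigma$ to the side of $W^\sst_{\loc}(P)$ through which $\gamma^\sigma$ passes, and then showing that this same sign shows up as the multiplicative correction to $\sign(\mfT_1)$ when $Y_R$ replaces $Y_P$. This amounts to a careful orientation bookkeeping along the partially hyperbolic central bundle $E^\ct$ over the orbits of $P$ and the homoclinic excursions; once the orientation is trivialized by a continuous section of $E^\ct$, the two sign contributions appear as genuine holonomy signs and the identity $\sign(\mfT_1^{\text{new}})=\sign(\mfT_1)\cdot \sign(c_\sigma)$ becomes transparent. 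The remainder of the argument is routine $\lambda$-lemma and Birkhoff--Smale machinery and small $C^1$-perturbations as in Lemma~\ref{l.reduction}.
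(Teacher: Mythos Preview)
Your argument contains a genuine gap at the step you yourself flag as ``the main technical obstacle'', and that step is not merely bookkeeping: it is false as stated. The side of $W^\sst_\loc(P)$ on which a transverse homoclinic point lies is a \emph{positional} datum (the sign of its central coordinate $x^c$), whereas the sign of the central holonomy $c_\sigma$ along the homoclinic excursion is a \emph{derivative} datum (whether $Df^k$ preserves or reverses the one-dimensional bundle $E^\ct$ along that orbit). These are independent: in the affine models of Proposition~\ref{p.simple} one can easily manufacture two transverse homoclinic orbits of $P$ landing on opposite sides of $W^\sst_\loc(P)$ whose central holonomies are both positive. Bi-accumulation therefore gives you no control over $\sign(c_\sigma)$, and in particular does not produce a homoclinic excursion with $c_{\sigma_\ast}<0$.

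There is a second, related error. With $R$ chosen inside the horseshoe of $P$ and the new quasi-transverse point $Y_R$ obtained near $Y_P$ by the $\lambda$-lemma, the new unfolding map $\mfT_1^{\text{new}}$ is just the restriction of $f^{\tau_{p,q}}$ to a neighborhood of $Y_R$, so $\sign(\mfT_1^{\text{new}})=\sign(\mfT_1)=-$. The holonomy $c_\sigma$ enters into $\sign(R)=\sign(\lambda^N c_\sigma)=\sign(c_\sigma)$, not into $\sign(\mfT_1^{\text{new}})$; your identity $\sign(\mfT_1^{\text{new}})=\sign(\mfT_1)\cdot\sign(c_\sigma)$ is not correct. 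Thus the only way your construction could produce a non-twisted cycle is by finding $R$ with negative central multiplier, which again requires $c_\sigma<0$ --- exactly the unavailable ingredient.

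The paper circumvents this by building $R$ differently. Bi-accumulation is used only positionally (Lemma~\ref{l.step1}): the twisted map $\mfT_{1,0}$ reverses the central ordering, so homoclinic disks approaching $W^\ut_\loc(P)$ from the negative side are sent to the positive side of $W^\st_\loc(Q)$ and, after a standard perturbation, furnish an adapted homoclinic point of $P$ at central coordinate $1$. Then (Lemma~\ref{l.step2}) the periodic point $R$ is created by iterating through the \emph{cycle itself} (via the IFS $\tilde\Ga_t^{m,k}$), i.e.\ its orbit visits both $U_P$ and $U_Q$. The orientation reversal that makes the new cycle non-twisted is supplied by the twisted transition $\theta_{1,t}$ already present in the cycle: either $\theta_2$ reverses and $R$ acquires a negative central multiplier, or $\theta_2$ preserves and the new central unfolding map $\theta_{1,t}\circ\tilde\phi_\lambda\circ\theta_2$ is a composition of two reversals and one preservation, hence preserves. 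This is the missing idea: the needed sign flip comes from the cycle's own twist, not from a hypothetical negative-holonomy homoclinic excursion of $P$.
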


Item 
(A) in Theorem~\ref{th.proposition} implies that the cycle associated to 
$R_g$ and $Q_g$ can be stabilized. Since $R_g$ is homoclinically related
to $P_g$, Lemma~\ref{l.homoclinicstabilization}
implies that the cycle associated to $P_f$ and $Q_f$ can also be stabilized.
Thus Proposition~\ref{p.t.twistednontwisted} implies (B) in Theorem~\ref{th.proposition}.

\subsubsection{Proof of  Proposition~\ref{p.t.twistednontwisted}}
\label{ss.cyclesbiaccumulated}
The proposition is an immediate consequence of the following two
lemmas:

\begin{lemma}\label{l.step1}
Under the hypotheses of Proposition~\ref{p.t.twistednontwisted},
 there is $g$
arbitrarily $C^1$-close to $f$ with a 
twisted simple cycle associated to $P$ and $Q$ 
and with an adapted homoclinic point of
$P$.
\end{lemma}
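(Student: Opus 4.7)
The plan is to reduce to the simple cycle setting and then to construct the adapted homoclinic point by using the bi-accumulation hypothesis to supply a \emph{seed} disk of $W^\ut(P)$ whose forward iterate through the cycle passes through $U_Q$ at central coordinate exactly one. First, by Proposition~\ref{p.simple}, after an arbitrarily $C^1$-small perturbation one may assume that $f$ has a simple cycle associated to $P$ and $Q$, and the perturbation can be taken to preserve both the twisted character of the cycle and the $\st$-bi-accumulation of $P$ (both are robust $C^1$-properties). In the local coordinates near $P$ and $Q$ provided by that proposition we must produce a transverse homoclinic point $H=(\bar a^s,1,0^u)\in U_Q$ of $P$ together with a vertical disk $\{(\bar a^s,1)\}\times[-1,1]^u\subset W^\ut(P,f_t)$ for all $t$ near $0$, i.e., condition (C4) of Definition~\ref{d.adaptedsimple}.

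The naive adaptation of Lemma~\ref{l.adapted} does not work here: that construction chooses $t_k$ so that simultaneously $\psi_\be^m\circ\theta_{1,t_k}(0)=1$ and $\theta_{1,t_k}\circ\phi_\la^k\circ\theta_2(1)=0$, and in the twisted case (with $\theta_{1,t}(x)=-x+t$ and $\be,\la>0$) these yield the incompatible constraints $t_k=\be^{-m}>0$ and $t_k=-\la^k<0$. The role of the bi-accumulation hypothesis is precisely to bypass this obstruction. Using it, I would select a transverse homoclinic point $H^*=(h^{*,s},c^*,0)\in W^\st_\loc(P)\setminus W^\sst_\loc(P)$ with $c^*<0$; the sign is available because by hypothesis both components of $W^\st_\loc(P)\setminus W^\sst_\loc(P)$ contain transverse homoclinic points. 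Near $H^*$ fix a small $u$-dimensional disk $D^*\subset W^\ut(P)$ transverse to $W^\st_\loc(P)$ at $H^*$.

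The construction then goes as follows. Since $H^*\in W^\st_\loc(P)$, the iterate $f^n(H^*)$ approaches $P$ as $n\to\infty$, with central coordinate $\la^n c^*$, and by the inclination lemma the iterates $f^n(D^*)$ accumulate on $W^\ut_\loc(P)$ in the $C^1$-topology. For $n$ large, a subdisk of $f^n(D^*)$ centered at a point with $x^u\approx a^u$ is an almost-vertical $u$-disk very close to $Y_P$, so one can apply the unfolding map $\mfT_{1,t}$ to it, obtaining an almost-vertical disk in $W^\ut(P)\cap U_Q$ of central coordinate $-\la^n c^*+t$, which is positive for $t=0$ since $c^*<0$. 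Iterating by $f^{K\pi(Q)}$ stretches this disk in the $u$-direction and sends its central coordinate to $-\be^K\la^n c^*+\be^K t$. Keeping $t=0$ — so that the original heteroclinic point $Y_P$ still realizes the cycle — and making an arbitrarily small preliminary perturbation of the central multipliers $\la,\be$ so that $\be^K\la^n|c^*|=1$ for some large $n,K$, the resulting disk sits at central coordinate exactly $1$ in $U_Q$ and provides the desired adapted homoclinic point of $P$.

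The main obstacle, which I expect to be handled along the lines of Lemmas~\ref{l.reduction} and \ref{l.adapted}, is the final cleanup: after the iteration and the multiplier adjustment above, the local dynamics is not yet affine in the form required by Definition~\ref{d.simplecycles}, since the transition composed with the many extra iterations through $H^*$ is not a priori linear in the central direction. One then performs small multiplications in the central direction along long, pairwise disjoint segments of the orbit of $Y_P$ and of the new adapted homoclinic orbit to restore a genuine simple cycle; because $n$ and $K$ can be taken arbitrarily large these perturbations are arbitrarily $C^1$-small. The signs $(\sign(Q),\sign(P),\sign(\mfT_1))=(+,+,-)$ are preserved throughout, so the resulting $g$ has a twisted simple cycle associated to $P_g$ and $Q_g$ together with an adapted homoclinic point of $P_g$, as required.
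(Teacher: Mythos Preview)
Your proposal is correct and follows essentially the same approach as the paper: use bi-accumulation to obtain unstable disks of $P$ approaching $W^\ut_\loc(P)$ from the negative central side, push them through the orientation-reversing $\mfT_{1,0}$ to land in $U_Q$ with small positive central coordinate $x_i$, perturb $\be$ so that $\psi_\be^{k_i}(x_i)=1$, and perform a final cleanup to put the disk in vertical position, all at $t=0$ so the original cycle persists. One small clarification on your last paragraph: the cleanup obstacle is not that the simple-cycle transitions (C1)--(C3) become nonlinear (they do not; only $\be$ was perturbed) but rather that the homoclinic disk you produced is only almost-vertical and must be straightened to exactly $\{(\bar a^s,1)\}\times[-1,1]^u$ to satisfy (C4).
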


\begin{lemma}\label{l.step2}
Consider a twisted cycle $\scyc
(f,Q,P,\be,\la,-,\pm_2)$, $\la,\be>0$, with an adapted
homoclinic intersection (associated to $P$).
 Then there is $g$
arbitrarily $C^1$-close to $f$ with a saddle $R_g$ such that
 \begin{itemize}
 \item
 $R_g$ is homoclinically related to $P_g$ and
 \item
 $g$ has
 a non-twisted cycle associated to $R_g$ and $Q_g$.
 \end{itemize}
\end{lemma}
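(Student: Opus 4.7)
My plan is to exploit the transverse homoclinic point of $P$ provided by the adapted homoclinic intersection to produce, $C^1$-close to $f$, a small horseshoe near that homoclinic orbit, and then to pick inside it a periodic saddle $R_g$ (automatically homoclinically related to $P_g$) whose central eigenvalue and heteroclinic configuration with $Q_g$ give a non-twisted cycle.

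First I would apply the Smale--Birkhoff theorem to the transverse homoclinic orbit of $H=(\bar a^s,1,0^u)\in U_Q$ guaranteed by condition (C4). Working inside the partially hyperbolic neighborhood $V$ of the cycle (so that the resulting basic set inherits the splitting $E^{\sst}\oplus E^\ct\oplus E^{\uut}$), I obtain a horseshoe $\Lambda\subset\La_f(V)$ all of whose periodic points are homoclinically related to $P$. I would parametrize its periodic saddles by their itineraries: long stays near $P$ (contributing factors of $\la$ in the central direction) interspersed with a prescribed number of homoclinic excursions through the orbit of $H$ (each carrying one passage through $\mfT_1$ and one through $\mfT_2$). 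Thus the central eigenvalue of such a saddle $R\in\Lambda$ factors as
\[
\la_c(R)=\la^n\,\be^m\cdot\sigma_1^{\,\ell}\cdot\sigma_2^{\,\ell},
\]
where $n,m,\ell$ are the respective multiplicities, $\sigma_1=\sign(\mfT_1)=-1$ (since the cycle is twisted), and $\sigma_2=\sign(\mfT_2)\in\{\pm 1\}$. Because $\la,\be>0$ the sign of $\la_c(R)$ equals $(\sigma_1\sigma_2)^\ell$, so a careful choice of $\ell$ prescribes $\sign(R)$.

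The new cycle itself is then produced as follows: since $W^\st(R)$ and $W^\ut(R)$ accumulate $C^1$ on the invariant manifolds of $P$ and the initial cycle provides the transverse intersection $X_Q\in W^\st(P,f)\pitchfork W^\ut(Q,f)$ and the quasi-transverse one $Y_P\in W^\ut(P,f)\cap W^\st(Q,f)$, an arbitrarily small perturbation $g$ of $f$ yields $W^\st(R_g,g)\pitchfork W^\ut(Q_g,g)\neq\emptyset$ and $W^\ut(R_g,g)\cap W^\st(Q_g,g)\neq\emptyset$. The new unfolding map $\mfT_1^R$ of the $R_g$-$Q_g$ cycle is read off from this chosen heteroclinic orbit, and its central sign is again a product of the sign contributions of the $\mfT_1,\mfT_2$ transitions and iterates traversed along the orbit. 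To verify that the new cycle is non-twisted, that is $(\sign(Q_g),\sign(R_g),\sign(\mfT_1^R))\ne(+,+,-)$, I would split into cases: when $\sigma_2=+1$ the single-loop choice $\ell=1$ forces $\sign(R_g)=-$, and the configuration $(+,-,\cdot)$ is already non-twisted regardless of the unfolding map; when $\sigma_2=-1$ every horseshoe saddle has $\sign(R_g)=+$, and one must instead pick an itinerary for $R_g$ and a heteroclinic point $Y_R$ whose defining orbit traverses an even number of passages through $\mfT_1$, giving $\sign(\mfT_1^R)=+$.

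The main obstacle is precisely this second case $\sigma_2=-1$. One must show that the horseshoe has enough combinatorial flexibility to realize a heteroclinic orbit $Y_R\in W^\ut(R_g)\cap W^\st(Q_g)$ with the correct parity of $\mfT_1$-passages while keeping the underlying saddle homoclinically related to $P$. I would handle this by performing an IFS-style analysis analogous to Proposition \ref{p.dictionary} and Lemma \ref{l.ifs}: describe the returns to a central section as a one-parameter family of compositions of the maps $\phi_\la, \psi_\be, \theta_{1,t}, \theta_2$, locate a fixed point of the relevant composition providing $R_g$, and arrange the auxiliary iterate producing $Y_R$ so that the full composition defining $\mfT_1^R$ contains an even number of $\theta_{1,t}$ factors. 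Once this combinatorial bookkeeping is verified, the lemma follows.
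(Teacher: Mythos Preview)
Your strategy is essentially the paper's: produce a saddle $R$ in the return dynamics of the cycle, link it homoclinically to $P$ through the adapted homoclinic disk, create a heteroclinic connection to $Q$, and verify by a sign count that the new cycle is non-twisted. The paper carries this out directly with the IFS rather than via an abstract Smale horseshoe, but the objects are the same. Where your proposal goes wrong is in the sign bookkeeping for the new unfolding map.

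Any heteroclinic orbit you build from $R$ (sitting in $U_Q$ near the adapted homoclinic point, at central coordinate close to $1$) to $Q$ traverses $\mfT_2$ exactly as many times as $\mfT_1$: the shortest one is $R\to\mfT_2\to\phi_\la^n\to\mfT_1\to Q$, and each extra loop adds one of each. Hence the central sign of $\mfT_1^R$ is $(\sigma_1\sigma_2)^k$, not $\sigma_1^k$. In your ``obstacle'' case $\sigma_2=-1$ this gives $(+1)^k=+1$ for every $k$, so the new cycle is automatically $(+,+,+)$ and no parity argument is needed at all. In your ``easy'' case $\sigma_2=+1$ the new unfolding map may well reverse orientation, but there $\sign(R)=(\sigma_1\sigma_2)^\ell=-1$, so the cycle is non-twisted regardless. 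Thus the obstacle you single out is not an obstacle, and the mechanism you propose to overcome it (forcing an even number of $\mfT_1$-passages) is based on a miscount. The paper makes exactly this two-case observation. It also avoids your post-hoc perturbation entirely: after arranging $\be^{-m}=\la^k$ it selects the unfolding parameter $t=\tilde\phi_\la^{\ell}(-1)$ (with $\ell\gg k$) so that the IFS simultaneously yields a fixed point $\tilde\Ga_t^{m,k}(1)=1$ with central derivative of modulus less than one (this produces $R$ at the \emph{same} central level as the adapted homoclinic disk, whence $W^{\sst}(R)\cap W^\ut(P)\ne\emptyset$ and the homoclinic relation with $P$ come for free) and the heteroclinic condition $\theta_{1,t}\circ\tilde\phi_\la^{\ell}\circ\theta_2(1)=0$ (producing $W^\ut(R)\cap W^\st(Q)\ne\emptyset$ via item~(D) of Proposition~\ref{p.dictionary}, with the new unfolding map explicitly equal to $\theta_{1,t}\circ\tilde\phi_\la^{\ell}\circ\theta_2$).
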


\subsubsection{Proof of Lemma~\ref{l.step1}}
We claim  that
(in the coordinates in $U_Q$ in Proposition~\ref{p.simple})
 there are
sequences of points $(x_i)_i$ and $(a_i^s)_i$, $x_i\in \RR$ and
$a_i^s\in \RR^s$, and of disks $\De_i$ of dimension $u$ such that
\begin{itemize}
\item $(a^s_i,x_i,0)\in \De_i$ where  $x_i\to
0^+$ and $a_i^s\to a^s$, and
\item
$\De_i\to \{(a^s,0)\} \times [-1,1]^u$ and $\De_i\subset
W^\ut(P,f)$,
\end{itemize}
 here $(a^s,0,0^u)$ is the heteroclinic intersection
between $W^\ut(P,f)$ and $W^\st(Q,f)$ in (C2) in
Proposition~\ref{p.simple}.


  \begin{figure}[hbtp]
\centering 
\scalebox{0.70}{\includegraphics[clip]{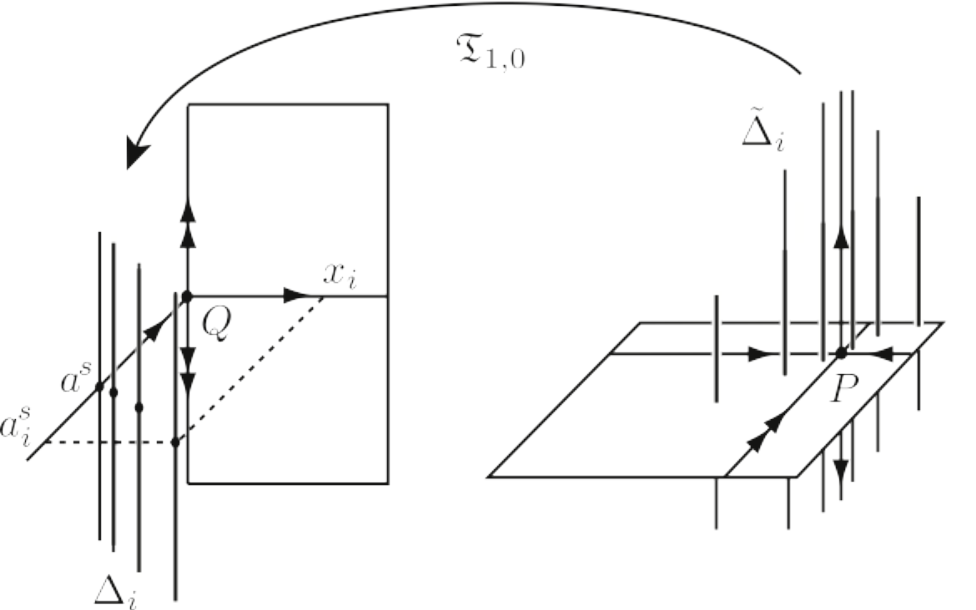}}
\caption{The disks $\De_i$.} \label{f.biaccumulatedtwisted}
\end{figure}
  

To see why this assertion is so just note that,
by the bi-accumulation property, there is
as sequence of unstable disks $\tilde \De_i\subset W^\ut (P,f)$ of dimension $u$
approaching to $W^\ut_\loc (P,f)$ 
from the ``negative side", see Figure~\ref{f.biaccumulatedtwisted}.
Since the cycle is twisted the map 
$\mfT_{1,0}$ reverses the ordering in the central direction. Thus
 these disks are mapped by $\mfT_{1,0}$ into a disks $\De_i$ that approaches
 $(a^s,0,0^u)$ from the ``positive side". 
See Figure~\ref{f.biaccumulatedtwisted}. 
 We need to perform a perturbation
 in order to put these disks in ``vertical" position.
  
  Arguing exactly as in the proof of Lemma~\ref{l.ifs}},
after  an arbitrarily small perturbation we can assume that $\beta$ is such that
$\psi_\be^{k_i}(x_i)=1$ for some
arbitrarily large $i$ and $k_i$. This provides a transverse homoclinic
point of $P$ of the form $(h^s,1,0)$. This follows from (F) in Proposition~\ref{p.dictionary}. Note that this perturbation can be done preserving
the cycle between $P$ and $Q$.

Finally, using
this transverse homoclinic point and
after an arbitrarily small perturbation, we
get the simple cycle with an adapted homoclinic intersection associated
to $P$ and $Q$ (the argument is similar to the one
in Lemma~\ref{l.reduction}.)
\hfill \qed

\subsection{Proof of Lemma~\ref{l.step2}}
The lemma follows arguing as in \cite[Lemma 3.13]{BDijmj}
and using Proposition~\ref{p.dictionary}.  
Note that we can assume (after a small modification of $\be$ and
$\la$) that $\be^{-m}=\la^k$. Noting that the cycle is twisted
(i.e., $\theta_{1,t}(x)=t-x$) we have that this equality implies that
$$
\Ga^{m,k}_0(1)=
\psi_\be^m \circ \theta_{1,0} 
\circ \phi_\la^k \circ \theta_2(1)=
\psi_\be^m \big(- \phi_\la^k (-1)\big)=1.
$$
In this case we also have,
$$
(\Ga_0^{m,k})'(1)=
(\psi_\be^m)'  \big(- \phi_\la^k (-1)\big)\,
 (\phi_\la^k)' (-1)=\pm \be^m\,\la^k=\pm 1.
$$
Thus  modifying the central derivatives at $P$ and $Q$, we can assume
that the cycle is semi-simple with 
central maps $\tilde \psi_\be$ and $\tilde \phi_\la$
such that there are 
large $m, k,$ and $\ell$, with $\ell>>k$, satisfying

\begin{equation}\label{e.periodicpoint}
\tilde\psi^m_\be \big(-\tilde \phi_\la^{k}(-1)+
\tilde \phi_\la^{\ell}(-1)\big)=1
\end{equation}
and
\begin{equation}
\label{e.derivadas1}
|\big(\tilde \psi_\beta^m\big)'  
\big( - \tilde \phi_\la^k (-1) + \tilde \phi_\la^\ell (-1)
\big)\,
\big( \tilde \phi_\la^k \big)' (-1)|<1.
\end{equation}
For that note that $\tilde \phi_\la^\ell(-1)$ is arbitrarily small in comparison  
with $\tilde \phi_\la^k(-1)$.

We now select the parameter 
$t=\tilde \phi^{\ell}(-1)<0$.
By equation~\eqref{e.periodicpoint} one has
\[
\begin{split}
\tilde \Ga_t^{m,k}(1) &= \tilde \psi^m_\beta \circ 
\theta_{1,t}\circ \tilde \phi_\la^k
\circ \theta_2(1)=
\tilde \psi^m_\beta \circ \theta_{1,t}\big( \tilde \phi_\la^k(-1)\big)\\
&=
\tilde\psi^m_\be \big(-\tilde \phi_\la^{k}(-1)+
\tilde \phi_\la^{\ell}(-1)\big)=1.
\end{split}
\]
Let $R=(r^s,1,r^u)\in U_Q$ be the saddle of $f_t$ associated to $1$ and the itinerary $(m,k)$
given by (A) in Proposition~\ref{p.dictionary}. 
Note that
\[
\begin{split}
|(\tilde \Ga_t^{m,k})' (1)|&=
|\big(\tilde \psi_\beta^m\big)'  
\big( 
\theta_{1,t} 
\big(\tilde \phi_\la^k ( \theta_2(1) )
\big)
\big)\,
\big( \tilde \phi_\la^k \big)' (\theta_2(1))|\\
&=
|\big(\tilde \psi_\beta^m\big)'  
\big( \tilde \phi_\la^k (-1) + \tilde \phi_\la^\ell (-1)
\big)\,
\big( \tilde \phi_\la^k \big)' (-1)|<1,
\end{split}
\]
where the inequality follows from \eqref{e.derivadas1}.
By (A) in Proposition~\ref{p.dictionary} the saddle $R$ 
has index $s+1$.
Indeed, since 
 $\theta_{1,t}(x)=-x+t$, the central multiplier of $R$ is
 positive  if
 $\theta_2$ reverses the orientation and negative  otherwise.
 
 We claim that the saddle $R$ is homoclinically related to $P$ and has a
 cycle associated to $Q$. Note that $W^\uut(R,f_t)=W^\ut(R,f_t)$.  
 
By equation~\eqref{e.initemA} in Proposition~\ref{p.dictionary}
we have that
\begin{equation}\label{e.cyclehomoclinic}
W^\st(R,f_t) \pitchfork W^\ut(Q,f_t)\ne\emptyset
\quad
\mbox{and}
\quad
W^\ut(R,f_t) \pitchfork W^\st(P,f_t)\ne\emptyset.
\end{equation}

From the existence of an adapted
homoclinic intersection and 
item (E)(1) in
Proposition~\ref{p.dictionary}:
\begin{itemize}
\item
 $H=(h^s,1,0)$ is a transverse
homoclinic point of $P$, 
\item
$\{(h^s,1)\}\times [-1,1]^u \subset
W^\ut (P,f_t) \cap U_Q$, and 
\item 
$\{[-1,1]^s\times \{(1,r^u)\}\subset W^{\sst}(R,f_t)$.
\end{itemize}
This implies that $W^\ut(P,f_t)\pitchfork W^\st(R,f_t)$.
Thus, by the second part
of \eqref{e.cyclehomoclinic},  the saddles $P$ and $R$ are
homoclinically related for $f_t$. 

To get cycle associated to $R$ and $Q$
note that the choice of $t$ implies that
$$
\theta_{1,t}\circ 
\tilde \phi_\la^\ell \circ \theta_2(1)=
-\tilde\phi_\la^\ell (-1)+ t=0.
$$
Since $R=(r^s,1,r^u)$, condition
(D) in Proposition~\ref{p.dictionary} implies
that $W^\ut(R,f_t)\cap W^\st(Q,f_t)\ne\emptyset$.
Thus by the first part 
of \eqref{e.cyclehomoclinic} the diffeomorphism 
$f_t$ has a cycle associated to $R$ and $Q$.

We claim that this cycle is non-twisted.
If $\theta_2$ reverses the orientation then the central
multiplier of $R$ is negative and the cycle is non-twisted.
Otherwise, we have a cycle whose central ``unfolding map" 
is obtained considering the composition 
$\theta_{1,t} \circ \tilde \phi_\la \circ 
\theta_2$. This map preserves the central orientation:
just note that $\theta_{1,t}$ and $\theta_2$ both reverse the orientation
 and $\tilde\phi_\la$ preserves this orientation
(recall that $\la>0$). This completes the proof of the lemma.
\hfill
\qed




\section{Proof of Theorems~\ref{t.homoclinic} and \ref{t.complexornontwisted}}
\label{s.proofoftheoremsAB}

\subsection{Proof of Theorem~\ref{t.complexornontwisted}.}
\label{s.proofoftheorem}
Note that (B) in Theorem~\ref{t.complexornontwisted} is an 
immediate consequence from (A) in Theorem~\ref{th.proposition}.

To prove item (A) let us assume that, for instance, the saddle $P$ has 
non-real central multipliers.
By  Theorem~\ref{t.complex} (see also Remark~\ref{r.non-trivial})
there is $g$ close to $f$ 
having 
 saddles
$P'_g$ and $Q'_g$
such that
\begin{itemize}
\item there is a cycle with real central multipliers
associated to $P_g'$ and $Q_g'$,
\item 
$P_g'$ and $Q_g'$ are 
 homoclinically related to 
 $P_g$ and $Q_g$,
 \item
 the homoclinic class
of $P'_g$ is non-trivial (note that we may have $Q_g'=Q_g$ and a
trivial homoclinic class $H(Q_g,g)$).
\end{itemize}
 By Lemma~\ref{l.homoclinicstabilization}
it is enough to prove that this new cycle can be stabilized.

If the cycle associated to $P'_g$ and $Q'_g$ is non-twisted the stabilization follows
from (A) in Theorem~\ref{th.proposition}. 
Otherwise, if the cycle is twisted, by Lemma~\ref{l.summary} 
there is a diffeomorphism $h$ close to $g$ having a
saddle
 $\bar P_h$ 
 such that
 
 \begin{itemize} 
 \item $\bar P_h$ is
 homoclinically related to $P'_h$ and has  the bi-accumulation property,
 \item
 there is a cycle associated to $Q_h'$ and $\bar P_h$. Note that this cycle
 has real central multipliers.
 \end{itemize}
 As above, it is enough to prove that this cycle can be stabilized. 
 The stabilization of this cycle follows from Theorem~\ref{th.proposition}. This ends the proof
 of the theorem. \hfill \qed

\subsection{Proof of Theorem~\ref{t.homoclinic}} \label{ss.proofofhomoclinic}
By
Theorem~\ref{t.complex} and  
Lemma~\ref{l.homoclinicstabilization}  
we can assume that the cycle associated to
the saddles $P$ and $Q$ 
has real central multipliers and that, for instance, 
the homoclinic class of $P$ is non-trivial.
If the cycle is non-twisted the result follows
from (A) in Theorem~\ref{th.proposition}.

Otherwise, if
the cycle is twisted,
arguing as in the proof of Theorem~\ref{t.complexornontwisted}, 
there is a diffeomorphism $g$ close to $f$ having a cycle 
associated to $Q_g$ and to 
a saddle $\bar P_g$ that is
homoclinically related to $P_g$ and satisfies the
$\st$-bi-accumulation property.
By (B) in Theorem~\ref{th.proposition} this cycle can be stabilized.
Since
 $\bar P_g$ is homoclinically related to $P_g$
  the initial cycle
also can be stabilized, ending the proof of the theorem. 
\hfill \qed

\subsection{Proof of Corollary~\ref{c.corol}} 
This result follows immediately from Theorem~\ref{t.homoclinic}
considering the following perturbation of  the initial cycle. 
First, we preserve one of the
heterocinic orbits in $W^\ut(P,f)\cap W^\st(Q,f)$.
We can also assume that $W^\st(P,f)$ transversely intersects 
$W^\ut(Q,f)$ and thus accumulates to $W^\st(Q,f)$. We can now
use the second
heteroclinic orbit in 
$W^\ut(P,f)\cap W^\st(Q,f)$ to get a transverse homoclinic point of $P$.
In this way we obtain a cycle satisfying Theorem~\ref{t.homoclinic}.

\section*{Acknowledgments}
The authors would like to express their gratitude to H. Kokubu, M. C. Li, and
M. Tsujii for their hospitality and financial support during their
visits to RIMS (Japan)  and NCTU (Taiwan) where a
substantial part of this paper was developed. 
This paper is also partially supported by CNPq, FAPERJ, and Pronex
(Brazil), ``Brazil-France Cooperation in Mathematics" 

The authors also thank  S. Crovisier, K. Shinohara, and  T. Soma for useful conversations in this 
subject.


\end{document}